\definecolor{dukeblue}{rgb}{0.0, 0.0, 0.61}
\definecolor{darkcandyapplered}{rgb}{0.64, 0.0, 0.0}
\newcommand{\one}{\mathbf{1}}
\newcommand{\diff}{\, \mathrm{d}}
\newcommand{\del}{\partial}
\newcommand{\N}{\mathbb{N}}
\newcommand{\R}{\mathbb{R}}
\def\BV{{\mathrm{BV}}}
\def\aug{\mathrm{aug}}
\def\hid{\mathrm{hid}}
\def\img{\mathrm{img}}
\def\aux{\mathrm{aux}}
\def\tr{\top}
\def\ini{\mathrm{in}}
\def\Lip{\mathrm{Lip}}
\def\loss{\mathrm{loss}}
\DeclareMathOperator*{\argmin}{arg\,min}
\def\*#1{\mathbf{#1}}
\theoremstyle{plain}
\numberwithin{equation}{section}
\newtheorem{remark}{Remark}
\newtheorem{definition}{Definition}[section]
\newtheorem{proposition}{Proposition}[section]
\newtheorem{theorem}{Theorem}[section]
\newtheorem{lemma}{Lemma}[section]
\newtheorem{example}{Example}[section]
\newtheorem{assumption}{Assumption}
	\title 
	[Large-time asymptotics in Deep Learning]{Large-time asymptotics in Deep Learning}
		\author{Carlos Esteve-Yag\"ue}
	\author{Borjan Geshkovski}
\author{Dario Pighin}
	\address {\textbf{\textup{Carlos Esteve-Yag\"ue, Borjan Geshkovski, Dario Pighin}}
	\newline \indent
	{\textup{Departamento de Matem\'aticas}
	\newline \indent
	\textup{Universidad Aut\'onoma de Madrid}}
	\newline \indent
	\textup{28049 Madrid, Spain} 
	\newline \indent \hspace{2.5cm} \textit{and}	
	\newline \indent 
	\textup{Chair of Computational Mathematics} \hspace{1.28cm}
	\newline \indent
	\textup{Fundaci\'on Deusto}
	\newline \indent
	\textup{Av. de las Universidades, 24}
	\newline \indent
	\textup{48007 Bilbao, Basque Country, Spain} 
	}
	\email{\href{mailto:borjan.geshkovski@uam.es}{\textcolor{dukeblue}{\texttt{\{carlos.esteve, borjan.geshkovski, dario.pighin\}@uam.es}} }}
	\author{Enrique Zuazua}
	\address{\textbf{\textup{Enrique Zuazua}}
		\newline \indent
		\textup{Chair in Applied Analysis, Alexander von Humboldt-Professorship}
		\newline \indent
		\textup{Department of Mathematics} \newline \indent
		\textup{Friedrich-Alexander-Universit\"at Erlangen-N\"urnberg}
		\newline \indent
		\textup{91058 Erlangen, Germany}
		\newline \indent \hspace{2.5cm} \textit{and} \newline \indent
	\textup{Chair of Computational Mathematics} 
	\newline \indent
	\textup{Fundaci\'on Deusto}
	\newline \indent
	\textup{Av. de las Universidades, 24}
	\newline \indent
	\textup{48007 Bilbao, Basque Country, Spain}
	\newline \indent \hspace{2.5cm} \textit{and} \newline \indent
	\textup{Departamento de Matemáticas} \newline \indent
		\textup{Universidad Autónoma de Madrid}
	\newline \indent
	\textup{28049 Madrid, Spain}
	}
	\email{\href{mailto:enrique.zuazua@fau.de}{\textcolor{dukeblue}{\texttt{enrique.zuazua@fau.de}}}}
\date{\today}
\begin{document}
	
		\begin{abstract}
		We consider the neural ODE perspective of supervised learning and study the impact of the final time $T$ (which may indicate the depth of a corresponding ResNet) in training. 
		For the classical $L^2$--regularized empirical risk minimization problem, 
		whenever the neural ODE dynamics are homogeneous with respect to the parameters, we show that the training error is at most of the order $\mathcal{O}\left(\frac{1}{T}\right)$. 
		Furthermore, if the loss inducing the empirical risk attains its minimum, the optimal parameters converge to minimal $L^2$--norm parameters which interpolate the dataset.
		By a natural scaling between $T$ and the regularization hyperparameter $\lambda$ we obtain the same results when $\lambda\searrow0$ and $T$ is fixed. This allows us to stipulate generalization properties in the overparametrized regime, now seen from the large depth, neural ODE perspective.
		To enhance the polynomial decay, inspired by turnpike theory in optimal control, we propose a learning problem with an additional integral regularization term of the neural ODE trajectory over $[0,T]$. 
	In the setting of $\ell^p$--distance losses, we prove that both the training error and the optimal parameters are at most of the order $\mathcal{O}\left(e^{-\mu t}\right)$ in any $t\in[0,T]$.
		   The aforementioned stability estimates are also shown for continuous space-time neural networks, taking the form of nonlinear integro-differential equations. By using a time-dependent moving grid for discretizing the spatial variable, we demonstrate that these equations provide a framework for addressing ResNets with variable widths. 
		\end{abstract}
			
	\maketitle	
	
	\setcounter{tocdepth}{1}
	
	\tableofcontents
	
		{\small{{\bf Keywords.} Deep learning, ResNets, neural ODEs, regularization path, optimal control, \indent exponential stability, approximation, turnpike theory}}.
	
	{\small{\href{https://mathscinet.ams.org/msc/msc2010.html}{{\bf 	\color{dukeblue}{AMS Subject Classification}}}}. 49J15; 49M15; 49J20; 49K20; 93C20; 49N05.}

\section{Introduction}

	Modern supervised learning addresses the problem of predicting from data, which roughly consists in approximating an unknown function $f:\mathcal{X}\to\mathcal{Y}$ from $N$ known but possibly noisy samples $\left\{\vec{x}_i, \vec{y}_i\right\}_{i=1}^N\subset\mathcal{X}\times\mathcal{Y}$. 
	Depending on the nature of the labels $\vec{y}_i$, one distinguishes two types of supervised learning tasks, namely that of \emph{classification} (labels take values in a finite set of $m$ classes, e.g. $\mathcal{Y}:=\{1,\ldots, m\}$) and \emph{regression} (labels take continuous values in $\mathcal{Y}\subset\R^m$). 
	In many applications, the dimension $d$ of each sample $\vec{x}_i\in\mathcal{X}\subset\R^d$ may be big compared to the number/dimension $m$ of the labels -- in image classification for instance, a sample of the ImageNet dataset \citep{krizhevsky2012imagenet},  which has $m=1000$ classes, is an element of $\R^{65536}$.
	
	A plethora of methods for finding $f(\cdot)$ efficiently with theoretical and empirical guarantees have been developed and investigated in the machine learning literature in recent decades. 
	Prominent examples, to name a few, include linear parametric methods (e.g. linear or logistic regression), kernel-based methods (e.g. support vector machines), tree-based methods (e.g. decision trees) and so on. 
	We refer to \citep{goodfellow2016deep} for a comprehensive presentation of these topics. 
	\smallskip
	
	Deep neural networks are parametrized computational architectures which propagate each individual sample of the input data $\{\vec{x}_i\}_{i=1}^N\subset\mathcal{X}\subset\R^{d}$ across a sequence of linear parametric operators and simple nonlinearities. 
	The so-called \emph{residual neural networks} (ResNets, \citep{he2016deep}) may, in the simplest case, be cast as schemes of the mould
	\begin{equation} \label{eq: 1.1}
	\begin{dcases}
	\*x_i^{k+1} = \*x_i^k + \sigma\left(w^k \*x_i^k + b^k\right) &\text{ for } k \in \{0, \ldots, N_{\text{layers}}-1\}\\
	\*x_i^0 = \vec{x}_i \in \R^d
	\end{dcases}
	\end{equation}
	for all $i \in [N]$, where we set $[N]:= \{1, \ldots, N\}$.  
	The unknowns are the states $\*x_i^k\in \R^d$ for any $i\in[N]$,  $\sigma$ is an explicit scalar, Lipschitz continuous nonlinear function defined component-wise in \eqref{eq: 1.1}, $\left\{w^k, b^k\right\}_{k=0}^{N_{\text{layers}}-1}$ are optimizable parameters (controls) with $w^k \in \R^{d\times d}$ -- called weights, and $b^k \in \R^d$ -- called biases, and $N_{\text{layers}}\geqslant 1$ designates the number of layers referred to as the depth. 
	The training process consists in finding optimal parameters steering all of the network outputs $\*x_i^{N_{\text{layers}}}$ as close as possible to the corresponding labels $\vec{y}_i$ by solving
	\begin{equation*}
	\min_{\left\{w^k, b^k\right\}_{k=0}^{N_{\text{layers}}-1}} \frac{1}{N} \sum_{i=1}^N \loss\left(P\*x_i^{N_{\text{layers}}}, \vec{y}_i\right),
	\end{equation*}
	whilst guaranteeing reliable performance on unseen data (ensuring \emph{generalization}). 
	Here $\loss(\cdot,\cdot)$ is a given continuous and nonnegative function which  differs depending on the task in hand -- for instance $\loss(x,y) := \|x-y\|^p_{\ell^p}$ for $p\in\{1,2\}$ is commonly used for regression tasks, while $\loss(x,y) = \log(1+\exp(-yx))$ may be used for binary classification, namely when $\vec{y}_i\in\{-1,1\}$ (we refer to \Cref{eq: softmax} for more general settings). 
	On the other hand, $P:\R^d\to\R^m$ is an affine map whose coefficients in practice are part of the optimizable parameters. 
	In our work, we shall assume that $P$ is given and specified on a case-by-case basis.
	\smallskip
	
	Due to the inherent dynamical systems nature of ResNets, several recent works have aimed at studying an associated continuous-time formulation in some detail, a trend started with the works \citep{weinan2017proposal, haber2017stable} (see also \citep{lu2018beyond, ruthotto2019deep}). 
	This perspective is motivated by the simple observation that for any $i \in [N]$ and for $T>0$, \eqref{eq: 1.1} is roughly the forward Euler scheme for the neural ordinary differential equation (neural ODE)
	\begin{equation} \label{eq: 1.2}
	\begin{dcases}
	\dot{\*x}_i(t) = \sigma(w(t)\*x_i(t)+b(t)) & \text{ for } t \in (0, T) \\
	\*x_i(0) = \vec{x}_i \in \R^d.
	\end{dcases}
	\end{equation} 
	The continuous-time, neural ODE formalism of deep learning has been used to great effect in applications -- for instance, by using adaptive ODE solvers \citep{chen2018neural, dupont2019augmented, queiruga2020continuous}, symplectic  and multigrid methods \citep{celledoni2020structure, gunther2020layer}, or indirect training algorithms based on the Pontryagin Maximum Principle \citep{li2017maximum, benning2019deep} --, and also for generative modeling through normalizing flows \citep{grathwohl2018ffjord, chen2019residual}.
	We emphasize that the origins of continuous-time supervised learning go back to the 1980s -- the neural network model proposed in \citep{hopfield1982neural} is a differential equation, whereas in \citep{lecun1988theoretical} back-propagation is connected to the adjoint method arising in optimal control.
	Related works include studies on identification of the weights from data \citep{albertini1993neural, albertini1993uniqueness} and controllability of continuous-time recurrent networks \citep{sontag1997complete, sontag1999further}.
	\smallskip
	
	The role of the final time horizon $T>0$, which plays a key role in the control of dynamical systems, has not been analyzed in the context of supervised learning problems via models such as \eqref{eq: 1.2}. 
	Now note once again that in the ResNet \eqref{eq: 1.1} the time-step $\Delta t=\sfrac{T}{N_\text{layers}}$ is fixed (equal to $1$), and each time instance of a forward Euler discretization to \eqref{eq: 1.2} would represent a different layer of \eqref{eq: 1.1}. Hence, whenever the time-step $\Delta t=\sfrac{T}{N_{\text{layers}}}$ is fixed (or goes to zero when $T$ is increased), the time horizon $T$ in \eqref{eq: 1.2} serves as an indicator of the number of layers $N_{\text{layers}}$ in the ResNet \eqref{eq: 1.1}. 
	Thus, a good knowledge of the behavior of the learning problem and the neural ODE flow over longer time horizons is desirable in view of understanding approximation and generalization properties. 
	In this work, we aim to bridge this gap by proposing several insights stemming from an analysis of the role of the time horizon $T$.
	
	\subsection{Our contributions.} We shall focus this presentation on the neural ODE \eqref{eq: 1.2}, but our results also hold for other systems, as seen in the respective statements.
		
	\begin{enumerate}
	\item[\textbf{1.}] 
	We first consider the classical regularized empirical risk minimization problem
	\begin{align} \label{functional no penalized intro}
	&\inf_{\substack{[w,b]\in \mathscr{H}(0,T; \R^{d_u}) \\ \*x_i(\cdot) \text{ solves } \eqref{eq: 1.2}}} \underbrace{\frac{1}{N} \sum_{i=1}^N \loss\big(P\*x_i(T), \, \vec{y}_i\big)}_{:= \mathscr{E}(\*x(T))}
	 + \underbrace{\lambda\Big\|[w,b] \Big\|^2_{\mathscr{H}(0,T; \R^{d_u})}}_{\text{regularization}}
	\end{align}
	where\footnote{Here $H^1(0,T;\R^{d_u})$ denotes the Sobolev space of square integrable functions from $(0,T)$ to $\R^{d_u}$ with square integrable weak derivatives (see \Cref{sec: notation}). We consider $H^1$--regularization in the setting of \eqref{eq: 1.2} to ensure the existence of minimizers, see \Cref{rem: sobolev.regularization}.}  $\mathscr{H}$ is either $L^2$ or $H^1$.
	In \Cref{thm: no.running}, we show that when $\loss(\cdot,\cdot)$ and the affine map $P:\R^d\to\R^m$ are such that the minimum of $\mathscr{E}$ (equal to zero) is attained and when the activation function $\sigma$ in \eqref{eq: 1.2} is $1$--homogeneous, the training error $\mathscr{E}(\*x_T(T))$ of the collection $\*x_T =\{\*x_{T,i}\}_{i\in[N]}$ of solutions to \eqref{eq: 1.2} corresponding to any solution $\left[w_T, b_T\right]$ to the minimization problem \eqref{functional no penalized intro}, is at most of the order $\mathcal{O}\left(\frac{1}{T}\right)$, whilst the optimal parameters $\left[w_T, b_T\right]$ converge, on a suitable time-scale, to a solution $\left[w^\ast, b^\ast\right]$ of
	\begin{align} \label{eq: interpolation.intro}
	&\inf_{\substack{[w,b]\in \mathscr{H}(0,1;\R^{d_u}) \\ \*x_i(\cdot) \text{ solves } \eqref{eq: 1.2} \text{ in } [0,1] \\ \text{ and } \\ \scalebox{0.7}{$ \displaystyle \mathscr{E}(\*x(1))=0$}  }} \Big\|[w,b] \Big\|^2_{\mathscr{H}(0,1;\R^{d_u})} \hspace{0.25cm}
	\end{align}
	when $T\longrightarrow\infty$. 
	
	Let us put the above result into context.
	For neural ODEs for which $L^2$--regularization suffices, we remark that $T\longrightarrow\infty$ is equivalent to $\lambda\searrow0$. 
	The latter is the regularization path limit, studied in the literature for linear models and multi-layer perceptrons (but not for more compound neural ODE models), where the solutions obtained in the limit can be shown to satisfy desirable generalization properties (see \Cref{sec: related.work}).
		
	Using similar arguments as when $T\longrightarrow\infty$, in \Cref{thm: no.running.lambda} we obtain the same conclusions when $\lambda\searrow0$ and $T$ is fixed. 
	Consequently, \Cref{thm: no.running} stipulates generalization properties -- namely, optimizing with $T\gg1$, which may be interpreted as a larger depth for ResNets, has the practically desirable effect of making the training error close to zero, but by means of parameters with the smallest amplitude and thus trajectories with the least oscillations. 
	\smallskip	
	
	\item[\textbf{2.}] In the setting of losses for which $\mathscr{E}$ does not attain its minimum equal to zero, occurring in many classification contexts (for instance $\loss(Px,y)=\log\left(1+e^{-yPx}\right)$ when $\vec{y}_i \in \mathcal{Y}=\{-1,1\}$ or multi-label tasks where $\vec{y}_i \in[m]$ for $m\geqslant 2$ via cross-entropy loss), we show that the training error $\mathscr{E}(\*x_T(T))$ is at most of the order $\mathcal{O}\left(\frac{1}{T^{\alpha}}\right)$
	for all $\alpha\in\left(0,1\right)$ (see \Cref{thm: thm.classification.lambda}). Analog results hold when $T>0$ is fixed and $\lambda\searrow0$ (see \Cref{thm: thm.classification.lambda.reg}).
	 \smallskip

	\item[\textbf{3.}] 
	To enhance the polynomial stability of the training error $\mathscr{E}(\*x_T(T))$ with respect to $T$, we also consider the augmented empirical risk minimization problem\footnote{As for \eqref{functional no penalized intro}, we consider $\mathscr{H}=L^2$ when existence of minimizers can be ensured; however, unlike for \eqref{functional no penalized intro}, in the setting of \eqref{eq: 1.2}, we will sometimes consider a $\BV$ regularization (see \Cref{sec: notation} for a definition) rather than the stronger $H^1$ regularization for technical reasons (see \Cref{rem: BV.need}).}
	\begin{equation} \label{eq: turnpike.functional.intro}
	\inf_{\substack{[w,b]\in \mathscr{H}(0,T;\R^{d_u}) \\ \*x_i(\cdot) \text{ solves } \eqref{eq: 1.2}}} \mathscr{E}(\*x(T)) + \frac{1}{N}\sum_{i=1}^N\int_0^T \left\|\*x_i(t)-\overline{\*x}_i\right\|^2 \diff t + \lambda \Big\|[w,b] \Big\|_{\mathscr{H}(0,T;\R^{d_u})}^2,
	\end{equation}
	where $P:\R^d\to\R^m$ appearing in $\mathscr{E}$ is Lipschitz and surjective, $\loss$ is an $\ell^p$--distance, while the targets $\overline{\*x}_i \in P^{-1}(\{\vec{y}_i\})$ for all $i \in [N]$ are arbitrary but given. 
	Under a specific controllability assumption but without any differentiability assumptions on the activation function $\sigma$ or smallness assumptions on the dataset, in \Cref{thm: turnpike.P} (see also \Cref{thm: turnpike.BV}) we show that optimal parameters $[w_T(t),b_T(t)]$ and the training error $\mathscr{E}(\*x_T(t))$ of the corresponding vector $\*x_T(t)$ of solutions to \eqref{eq: 1.1} are at most of the order $\mathcal{O}(e^{-\mu t})$ for any $t\in[0,T]$ and some $\mu>0$ independent of $T$.
	
	This result is in line with \Cref{thm: no.running} and \Cref{thm: thm.classification.lambda}, but with an improved estimate of the time horizon needed to be $\varepsilon$--close to the interpolation regime for any given $\varepsilon>0$. 
	Due to the exponentially small global minimizers, numerical experiments show that the learned flow is simple, which could stipulate possible generalization properties.
	\Cref{thm: turnpike.P} is a manifestation of the so-called \emph{turnpike property}\footnote{The turnpike property is a staple of optimal control problems such as the linear quadratic regulator  (LQR) over large but finite time horizons; it refers to the fact that optimal controls (parameters) and trajectories, in long time intervals, remain close to the optimal stationary controls and trajectories. In the setting of \eqref{eq: turnpike.functional.intro}, due to the structure of the neural ODE, the optimal stationary parameters are zero, and thus the optimal stationary state is the collection $\{\overline{\*x}_i\}_{i\in[N]}$, as reflected in our result.}, well-known in optimal control theory (\citep{porretta2013long, trelat2015turnpike, faulwasser2020turnpike}).
	
	Problem \eqref{eq: turnpike.functional.intro} is motivated by the natural training problem which aims at minimizing the error over every time/layer:
	\begin{equation} \label{eq: turnpike.functional.intro.wanted}
	\inf_{\substack{[w,b]\in \mathscr{H}(0,T;\R^{d_u}) \\ \*x_i(\cdot) \text{ solves } \eqref{eq: 1.2}}} \int_0^T \mathscr{E}(\*x(t)) \diff t + \lambda \Big\|[w,b] \Big\|_{\mathscr{H}(0,T;\R^{d_u})}^2,
	\end{equation}
	where $\loss(\cdot,\cdot)$ appearing in $\mathscr{E}$ is continuous and nonnegative, but otherwise arbitrary.  
	Whilst left without proof, numerical experiments stipulate that a similar decay for the training error, and, combined with \Cref{thm: turnpike.P}, motivate the usage of \eqref{eq: turnpike.functional.intro.wanted} in practice (see \Cref{sec: motivating.problem}).
	\smallskip
	
	\item[\textbf{4.}]
	In \Cref{prop_Nleqd+1} we show that the controllability assumption needed for \Cref{thm: turnpike.P} is satisfied for a subclass of neural ODEs with $C^1$--regular activation functions $\sigma$. We also illustrate the relation between the amplitude of the weights needed to reach the interpolation regime, and the dispersion of the input data (see \Cref{prop_lower_bound}).
	\smallskip
		
	\item[\textbf{5.}] 
	To address variable width architectures motivated by multi-layer perceptrons and convolutional neural networks, in \Cref{sec: resnet.variable} we study a continuous space-time neural network formulation taking the form of a scalar integro-differential equation proposed in \citep{liu2019selection}.
	We show that, by making use of a time-dependent moving grid for discretizing the spatial variable, these equations provide a framework for addressing ResNets with variable widths. Furthermore, in \Cref{thm: non.local.scaling} (resp. \Cref{thm: sigma.inside.penalised.nonlocal}), we show that some of our finite-dimensional conclusions from \Cref{thm: no.running} (resp.  \Cref{thm: turnpike.P}) transfer, under similar assumptions, to the continuous space-time neural networks.
	\end{enumerate}
	
	\subsection{Related work.} \label{sec: related.work}
	
	We note that the impact of the time horizon for the regularized empirical risk minimization problem has been studied from a computational perspective in \citep{effland2020variational}. 
	We also refer the reader to the recent work \citep{faulwasser2021turnpike} for a stability analysis for the augmented supervised learning problem \eqref{eq: turnpike.functional.intro} in the ResNet setting, and to \citep{yague2021sparse} for a rigorous analysis of the problem \eqref{eq: turnpike.functional.intro.wanted} with an $L^1$--regularization of the parameters, where sparsity patterns of the parameters are shown.
	
	In \citep{thorpe2018deep}  (see also \citep{avelin2019neural}), the authors show, via $\Gamma$-convergence arguments, that the optimal control parameters in the discrete-time context converge to those of the continuous-time context when the step-size $\Delta t$ goes to $0$. 
	The latter is interpreted as an infinite layer limit when the final time horizon $T$ in the continuous-time context is fixed (equal to $1$).
	Our results are of different nature. Rather than proving that the discrete-time controls converge to the continuous-time ones, we exhibit the continuous-time, neural ODE representation, for which the final time horizon indicates the number of layers for the associated time-discretization when the time-step $\Delta t$ is fixed, and aim to characterize the impact of $T$ on the optimal parameters and on the training error.
	
	Our results are also related to several questions studied in existing literature.
	
	\smallskip
	\noindent
	\textbf{Universal approximation.}
	On a first note, the asymptotic results presented herein may (heuristically) be interpreted as approximation results in the sense of the universal approximation theory. 
	These are density results for neural networks, and in the simplest cases can be interpreted in terms of the elementary building blocks of measure theory such as the density of simple functions in Lebesgue spaces. 
	The first result in this direction is the seminal work \citep{cybenko}, which indicates that shallow neural networks with increasing width,  i.e. a superposition of sufficiently many dilated and translated sigmoids, may approximate any continuous  function on compact sets. 
	We also refer to \citep{hornik1989, pinkus1999approximation} for an extension to multi-layer neural networks. 
	Our results are somewhat dual to \citep{cybenko} -- therein, to increase the approximation accuracy, the width is allowed to grow, whilst we fix the width and allow the depth to increase. 
	We do note however that we prove approximation properties for the optimal parameters, and for a fixed dataset, unlike what is commonly done in universal approximation theorem, where the parameters are not known explicitly.
	We refer to the thesis \citep{muller2019universal} for results and a comprehensive review of universal approximation results for ResNets, and to the recent works \citep{li2019deep, teshima2020universal, zhang2020approximation} and \citep{ruiz2020universal}, for universal approximation results for neural ODE and for an analysis on the latter's working mechanisms.
	
	\smallskip
	\noindent
	\textbf{Regularization path limit: $\lambda\searrow0$.}
	The regularization path limit 
	$\lambda\searrow0$ for minimizers $[w_\lambda,b_\lambda]$ has also been addressed in the machine learning literature, albeit for more classical models.
	For instance in \citep{rosset2004boosting, rosset2003margin}, the authors study linear logistic regression, and show convergence of the margin to the max-margin as $\lambda\searrow0$, assuming linearly separable data. 
	The max-margin, support vector machine solution (see \Cref{rem: absence.margin.conv} for details) is a special example among all solutions that fit the training data -- another example includes minimal $\ell^2$--norm solution for linear regression (or generally supervised learning tasks in which the loss may attain its minimum). 
	Both these solutions can be shown to ensure generalization by virtue of explicit generalization error estimates \citep{bartlett2002, kakade2009} for linear models or multi-layer perceptrons. 
	This insight could stipulate a likely generalization capacity of our asymptotic limits as $T\longrightarrow\infty$ or $\lambda \searrow0$.
	
	The results of \citep{rosset2004boosting, rosset2003margin} have subsequently been extended in \citep{wei2019regularization} (and some of the references therein) to perceptrons with ReLU activations, where the intrinsic homogeneity of the network is used -- the authors prove an analog result to \Cref{thm: no.running.lambda} when $\lambda\searrow0$ for regression tasks (the loss is a distance) and two-layer perceptrons. Our results can be seen as an extension of the aforementioned insights to the neural ODE context.
\smallskip

\noindent
	\textit{Implicit regularization of gradient methods.}
	When training without explicit regularization (i.e. $\lambda=0$), a common approach in the literature is to resort to algorithm-dependent generalization analysis, where the limit solutions are akin to the limit solutions when $\lambda\searrow0$. Whilst the former is not a question we address in this work, where generally estimates are provided in terms of the number of algorithm iterations rather than the number of layers, we provide a brief overview of the literature for completeness. We refer the reader to the recent review \citep{bartlett2021deep} for a comprehensive presentation of these artifacts. 
	
	The implicit bias of gradient descent (\citep{soudry2018implicit, gunasekar2018implicit}) indicates that in the overparametrized regime, after training a linear model or perceptron with gradient-based methods until zero training error, without requiring any explicit parameter regularization, among the many predictors which overfit on the training dataset, the algorithm selects the one which performs best on the test dataset (e.g. minimal $\ell^2$--norm solution or max-margin solution). 
	Thus, even-though the approach is significantly different from that when $\lambda\searrow0$, the asymptotic limits oftentimes coincide.
	 
	   Recent works have shown that gradient descent can allow overparametrized multi-layer networks to attain arbitrarily low training error on fairly general datasets (\citep{du2019gradient, allen2019learning, allen2019convergence}), and find minimum-norm/maximum-margin solutions that fit the data in the settings of logistic regression, deep linear networks, and symmetric matrix factorization (\citep{gunasekar2018implicit, soudry2018implicit, ji2018risk, ma2020fcom}). 
	In \citep{chizat2018global, chizat2020implicit} overparametrization is approached from the point of view of the width of the neural network, unlike our depth-inspired perspective.  
	The authors consider a two-layer perceptron with ReLU activation, exhibit a mean-field, Wasserstein gradient flow formulation of the descent scheme yielding the optimal parameters, and prove that these parameters approach global minimizers of the cost functional, with the global minimizer being characterized as a max-margin classifier in a certain non-Hilbert space of functions. 
	We refer the reader to \citep{sirignano2020mean, javanmard2020analysis,  mei2018mean, nitanda2017stochastic, chizat2018lazy} for related works in this direction, and \citep{nguyen2020rigorous} for an extension of the aforementioned convergence results to multi-layer perceptrons.
	
	\subsection{Outline} 
	The remainder of the paper is organized as follows.  
	In \Cref{sec: supervised.learning}, we give a comprehensive presentation of the neural ODE perspective of deep supervised learning.
	In \Cref{sec: wo.tracking}, we present our main results in the context of regularized empirical risk minimization (\Cref{thm: no.running} and \Cref{thm: thm.classification.lambda}).
	In \Cref{sec: turnpike.main}, we present our main result for the augmented empirical risk minimization problem, namely exponential decay of the training error with exponentially small parameters (\Cref{thm: turnpike.P} and also \Cref{thm: turnpike.BV}), as well as several numerical experiments stimulating the validity of our conjectures. 
	Finally in \Cref{sec: resnet.variable}, we present the continuous space-time analog of residual neural networks with variable widths, depict some possible approaches for passing from the continuous to the discrete case, and present extensions of \Cref{thm: no.running} and \Cref{thm: turnpike.P} in this context.
	The proofs of all results may be found in \Cref{sec: proofs}.
	
	\subsection{Notation, conventions, assumptions} \label{sec: notation}
	We set $\dot{x}(t) := \frac{\diff x}{\diff t}(t)$ and $[n]:=\{1,\ldots,n\}$.
	Given $a \in \R^n$, we denote by $a^\tr$ its transpose. 
	We use the notation $x_T$ to display the dependence of a vector $x\in\R^n$ on the time horizon $T$.
	We denote by $\|a\|$ the standard euclidean norm when $a\in\R^n$ is a vector, and the entry-wise euclidean norm (Frobenius norm) when $a\in\R^{n\times m}$ is a matrix -- we recall
	\begin{equation*}
	\|a\| := \left(\sum_{j=1}^n\sum_{k=1}^m |a_{j,k}|^2\right)^{\sfrac{1}{2}}.
	\end{equation*}
	We denote by $\Lip(\R)$ the set of functions $f: \R \longrightarrow \R$ which are globally Lipschitz continuous, and by $L^2(0, T; \R^n)$ (resp. $H^1(0,T;\R^n)$) the Lebesgue (resp. Sobolev) space consisting of all functions $f: (0,T) \longrightarrow \R^n$ which are square integrable (resp. square integrable and with a square integrable weak derivative) -- recall that $H^1(0,T;\R^n)$ is endowed with the norm 
	$\|f\|_{H^1(0,T;\R^n)}^2 := \|f\|^2_{L^2(0,T; \R^n)} + \|\dot{f}\|^2_{L^2(0,T; \R^n)}$. 
	We will use the convention $H^0=L^2$. 
	We recall (see \citep[p. 117]{ambrosio2000functions}) the definition of the space of \emph{bounded variation functions} $\BV(0,T;\R^n)$ as the space of integrable functions whose weak derivative is a finite Radon measure:
	\begin{align*}
	\BV(0,T;\R^n):=\Big\{f&\in L^1(0,T;\R^n)\colon \exists\, \mathrm{D}f \in \mathscr{M}(0,T;\R^n) \text{ such that } \\
	&\sum_{j=1}^n\int_0^T f_j(t)\varphi'_j(t)\diff t = -\sum_{j=1}^n\int_0^T\varphi_j(t)\diff\mathrm{D}f_j(t), \hspace{0.25cm} \forall \varphi\in C^1_{c}(0,T;\R^n) \Big\}.
	\end{align*}
	Here, $\mathscr{M}(0,T;\R^n)$ denotes the set of finite Radon measures on $(0,T)$ with values in $\R^n$.
	The space $\BV(0,T;\R^n)$ is endowed with the norm
	\begin{equation*}
	\|f\|_{\BV(0,T;\R^n)} := \|f\|_{L^1(0,T;\R^n)} + \sum_{j=1}^n |\mathrm{D}f_j|(0,T).
	\end{equation*}
	These definitions apply to matrix valued functions by simply considering the vectorized form of the matrix. Finally, we say that a function $f:\R\to\R$ is $\alpha$--homogeneous, for $\alpha>0$, if $f(cx) = c^\alpha x$ for $c\in\R$ and $x\in\R$.
	\medskip
	
	\noindent
	Throughout the remainder of this work, we will work under the following couple of assumptions, which are universal in the context of machine learning.	
	
	\begin{assumption}
	We henceforth assume that we are given a training dataset 
	\begin{equation*}
	\{\vec{x}_i, \vec{y}_i\}_{i\in[N]}\subset\mathcal{X}\times\mathcal{Y},
	\end{equation*}
	where $\mathcal{X}\subset\R^d$ and $\vec{x}_i\neq \vec{x}_j$ for $i\neq j$. We suppose that $\mathcal{Y}\neq\varnothing$, with $\mathcal{Y}\subset\R^m$ for regression tasks, $\mathcal{Y}=\{-1,1\}$ for binary classification and $\mathcal{Y}=[m]$ for mutli-label classification tasks.
	\end{assumption}
	
	\noindent
	We henceforth denote 
	\begin{equation*}
	d_u:=d\times(d+1), \hspace{1cm} d_x:=dN.
	\end{equation*}
	We will, most importantly, make use of the following abuse of notation: given the input data $\{\vec{x}_i\}_{i\in[N]}$ with $\vec{x}_i\in\R^d$, we define the stacked vector $\*x^0\in\R^{d_x}$ by
	\begin{equation} \label{eq: notation.abuse}
	\*x^0:=\begin{bmatrix}\vec{x}_1\\\vdots\\\vec{x}_N\end{bmatrix} \hspace{1cm} \text{ with } \hspace{1cm} \*x^0_i = \vec{x}_i\hspace{0.25cm} \text{ for } i\in[N].
	\end{equation}
	We shall apply convention \eqref{eq: notation.abuse} to other vectors generally denoted by $\*x\in\R^{d_x}$ and defined by stacking given sub-vectors $\{\*x_i\}_{i\in[N]}$ with $\*x_i\in\R^d$:
	\begin{equation*}
	\*x:=\begin{bmatrix}\*x_1\\\vdots\\\*x_N\end{bmatrix}.
	\end{equation*}
	Thus, when we write $\*x_i$ for $i\in[N]$, we insinuate that $\*x_i\in\R^d$, rather than considering solely the $i$-th row of $\*x\in\R^{d_x}$ (which is a scalar).
	\smallskip
	
	\noindent
	The following assumption is satisfied by most of the commonly used activation functions, including sigmoids such as $\sigma(x) = \tanh(x)$, and rectifiers: $\sigma(x) = \max\{\alpha x, x\}$ for $\alpha \in [0, 1)$.
	
	\begin{assumption}
	Unless stated otherwise, we fix an activation function $\sigma$ satisfying
	\begin{equation*}
	\sigma \in \Lip(\R) \hspace{0.25cm} \text{ and } \hspace{0.25cm} \sigma(0) = 0.
	\end{equation*}
	\end{assumption}

	\section{Roadmap to learning via neural ODEs} \label{sec: supervised.learning}
	
	\subsection{Feed-forward neural networks}	
	
	The canonical example of a feed-forward neural network is the \emph{multi-layer perceptron} (MLP), which generally takes the form
	\begin{equation} \label{eq: mlp}
	\begin{dcases}
	\*x_i^{k+1} = \sigma\left(w^k \*x^k + b^k\right) &\text{ for } k \in \{0, \ldots, {N_{\text{layers}}}-1\}\\
	\*x_i^0 = \vec{x}_i \in \R^d
	\end{dcases}
	\end{equation} 
	for $i\in[N]$; here ${N_{\text{layers}}}\geqslant1$ is the depth of \eqref{eq: mlp}, each $k$ is a layer, the vector $\*x_i^k \in \R^{d_k}$ designates the state at the layer $k$, $d_k$ is referred to as the width of the layer $k$, while $w^k \in \R^{d_{k+1}\times d_k}$ and $b^k \in \R^{d_k}$ are the optimizable weight and bias parameters of \eqref{eq: mlp}. 
	Finally, $\sigma \in \Lip(\R)$ is a fixed nonlinear activation function -- 
	by abuse of notation, we define the vector-valued analog of $\sigma$ component-wise, namely, $\sigma: \R^d \longrightarrow \R^d$ is defined by 
	$\sigma(\*x)_j := \sigma(\*x_j)$ for $j \in [d]$.
	Common choices include sigmoids such as $\sigma(x) = \tanh(x)$ or $\sigma(x) = \frac{1}{1+e^{-x}}$, and rectifiers: $\sigma(x) = \max\{x,ax\}$ for a fixed $0\leqslant a<1$. 
	In practice, the activation $\sigma$ is generally selected using cross-validation. 
	It can readily be seen that the formulation \eqref{eq: mlp} coincides with the more conventional formulation of neural networks as compositional structures of parametric affine operators and nonlinearities, as namely 
	$\*x_i^{N_{\text{layers}}} = (\sigma \circ \Lambda^k \circ \ldots \circ \sigma \circ \Lambda^0)(\vec{x}_i),$ with $\Lambda^k \vec{x}:= w^k \vec{x} + b^k$ for $k\in \{0, \ldots, N_{\text{layers}}\}$.
	
	Note that the iterative nature of the MLP \eqref{eq: mlp} stimulates permuting the order of the parametric affine maps and the nonlinearity $\sigma$, to the effect of considering the almost equivalent, but somewhat simpler system
	\begin{equation} \label{eq: mlp2}
	\begin{dcases}
	\*x_i^{k+1} = w^k\sigma\left(\*x^k\right) + b^k &\text{ for } k \in \{0, \ldots, N_{\text{layers}}-1\}\\
	\*x_i^0 = \vec{x}_i \in \R^d.
	\end{dcases}
	\end{equation} 
	We will henceforth concentrate on residual neural networks (ResNets).
	Contrary to the multi-layer perceptrons \eqref{eq: mlp} -- \eqref{eq: mlp2}, when considering ResNets one typically needs to assume that the width $d_k$ is fixed over every layer $k$, namely $d_k=d$ for every $k$.
	We refer to \Cref{sec: resnet.variable} for variable width ResNets.
	In the fixed width context, a residual neural network generally takes the form
	\begin{equation} \label{eq: resnet.discrete}
	\begin{dcases}
	\*x^{k+1}_i = \*x^k_i + \mathfrak{f}\left(u^k, \*x_i^k\right) &\text{ for } k \in \{0, \ldots, N_{\text{layers}}-1\}\\
	\*x^{0}_i = \vec{x}_i \in \R^d
	\end{dcases}
	\end{equation}
	for $i \in [N]$, where $\*x_i^k \in \R^d$ for any $i, k$, $u^k := [w^k, b^k] \in \R^{d\times(d+1)}$ and $\mathfrak{f}$ as the right hand side in \eqref{eq: mlp} or \eqref{eq: mlp2}.	
	As explained in \citep{lu2018beyond}, other network architectures (including convolutional neural networks) can be fit into the residual network framework.
	
	\subsection{Neural ODEs}
	
	It is readily seen that \eqref{eq: resnet.discrete} corresponds, modulo a scaling factor $\Delta t = \sfrac{T}{N_{\text{layers}}}=1$, to the forward Euler discretization of 
	\begin{equation} \label{eq: standard.dyn.sys}
	\begin{dcases}
	\dot{\*x}_i(t) = \mathfrak{f}(u(t), \*x_i(t)) &\text{ in } (0, T) \\
	\*x_i(0) = \vec{x}_i \in \R^d,
	\end{dcases}
	\end{equation}
	for $i \in [N]$. 
	Here $T>0$ is a fixed time horizon, and $u(t) := [w(t), b(t)] \in \R^{d\times(d+1)}$.
	As per what precedes, the nonlinearity $\mathfrak{f}$ in \eqref{eq: standard.dyn.sys} may take the form
	\begin{equation} \label{eq: sigma.outside}
	\mathfrak{f}(u(t), \*x_i(t)) := \sigma(w(t) \*x_i(t) + b(t))
	\end{equation}
	or 
	\begin{equation} \label{eq: sigma.inside}
	\mathfrak{f}(u(t), \*x_i(t)) = w(t)\sigma(\*x_i(t))+b(t).
	\end{equation}
	for $i \in [N]$. 	
	We will address both cases in our analytical study, and emphasize the stark differences between the two.  
    The above parametrizations are not the lone considered in practice. 
    In fact, one may consider, for instance, combinations of \eqref{eq: sigma.outside} and \eqref{eq: sigma.inside} which allow intermediate exploration (bottlenecks) in different dimensions:
    \begin{equation} \label{eq: outside.inside}
	\mathfrak{f}(u(t), \*x_i(t)) := w_2(t) \sigma(w_1(t) \*x_i(t) + b_1(t)) + b_2(t)
	\end{equation}
	where now $w_1(t) \in \R^{d_\hid \times d}$, $w_2(t) \in \R^{d \times d_\hid}$, $b_1(t) \in \R^{d_\hid}$ and $b_2(t) \in \R^{d}$.
		 		
	\subsection{Training} \label{eq: softmax}
	
	For an input sample $\vec{x}_i \in \R^d$, the output of the neural ODE \eqref{eq: standard.dyn.sys}, which is used for comparison with the corresponding label $\vec{y}_i$, is a projection of the form $P\*x_i(T) \in \R^m$ for some affine map $P:\R^d\to\R^m$: 
	\begin{equation*}
	Px := p_1x+p_2, 
	\end{equation*}
	where $p_1\in \R^{m\times d}$ and $p_2\in \R^m$. In the context of binary classification, namely $m=1$ with $\vec{y}_i=\pm1$, one may also use $Px := \tanh(p_1 x+p_2)$, for instance. The parameters $p_1, p_2$ are usually part of the trainable variables, but, as mentioned in the introduction, we shall assume that they are given (but otherwise arbitrary, e.g., $p_1$ and $p_2$ may be sampled as a random matrix and random vector respectively) for technical reasons.	
	\smallskip
	
	\noindent
	In supervised learning, one seeks to tune the parameters $[w,b]$ so that $P\*x_i(T)$ approaches $\vec{y}_i$ for $i \in [N]$ with respect to some loss function. 
	To this end, the Tikhonov-regularized, empirical risk minimization problem
	\begin{equation} \label{eq: non.regular.standard.sup.learn}
	\inf_{\substack{[w,b]\\ \*x_i(\cdot) \text{ solves } \eqref{eq: standard.dyn.sys}}} \frac{1}{N} \sum_{i=1}^N \loss\big(P\*x_i(T),\vec{y}_i\big) + \lambda \Big\|[w,b]\Big\|_{H^k(0,T; \R^{d_u})}^2
	\end{equation} 
	is considered, where $\lambda>0$ is fixed, and $\*x_i$ solves \eqref{eq: standard.dyn.sys} with $\mathfrak{f}$ as in \eqref{eq: sigma.outside} or \eqref{eq: sigma.inside} (although, more general cases such as \eqref{eq: outside.inside} can also be considered).
	Here 
	\begin{equation*}
	\loss(\cdot, \cdot):\R^m\times\mathcal{Y}\to\R_+
	\end{equation*}
	is a given continuous function, which in our work we will choose on a case-by-case basis. Common examples include $\loss(x,y)=\|x-y\|_{\ell^p}^p$ with $p=1,2$ for regression tasks ($\mathcal{Y}\subset\R^m$), and the cross-entropy loss 
	\begin{equation*}
	\loss(x,y) = -\log\left(\frac{e^{x_{y}}}{\sum_{j=1}^m x_j}\right) \hspace{1cm} x\in\R^m,\, y\in[m]
	\end{equation*}
	for classification tasks.
	Note that \eqref{eq: non.regular.standard.sup.learn} is the empirical and regularized approximation of the expected risk minimization problem: 
	\begin{equation*}
	    \inf_{\substack{[w,b]\\ \*x_i(\cdot) \text{ solves } \eqref{eq: standard.dyn.sys}}} \mathbb{E}\Big[\loss\left(P\*x_{\cdot}(T),\cdot\right)\Big],
	\end{equation*}
	where $\mathbb{E}[f(\cdot,\cdot)] := \int_{\R^d\times\R^m} f(x,y)\diff \rho(x,y)$, with $\*x_{\vec{x}}$ denoting the solution to \eqref{eq: standard.dyn.sys} with initial datum $\vec{x}$.
	Here $\rho: \R^{d}\times \R^m\to[0,1]$ is an unknown joint probability distribution, from which one samples the training dataset $\{\vec{x}_i, \vec{y}_i\}_{i\in[N]}$.
	We shall solely focus on the empirical problem in this work.
	\smallskip
	
	\noindent
	By virtue of the direct method in the calculus of variations, the existence\footnote{Uniqueness ought not to be expected (as in most deep learning tasks) due to the inherent lack of convexity. Notwithstanding, there exist cases (e.g. two-layer perceptrons) where uniqueness can be ensured by lifting the training problem as a minimization problem over measures, which may render the problem convex (see \citep{chizat2018global}).} of minimizers for the learning problems we consider herein can readily be shown when the appropriate parameter regularization is used (see \Cref{lem: compact.flow}). We  put emphasis on the following remark (see also \Cref{rem: BV.need}).
		
	\begin{remark}[Sobolev regularization] \label{rem: sobolev.regularization}
	We stress the consideration of a Sobolev, $H^1$--regularization in the case of \eqref{eq: sigma.outside.i} as this is sufficient to guarantee the existence of a minimizer. 
	Indeed, an issue could arise if solely an $L^2$--regularization is used due to the specific nonlinear form of the neural ODE \eqref{eq: sigma.outside.i}, which could (but might not) be an impediment for passing to the limit in the equation using only weak convergences (recall $\{\sin(nx)\}_{n=1}^{\infty}$ and $\sigma(x) = \max\{x,0\}$). 
	This issue is specific to the continuous-time setting, as in the discrete-time thus finite dimensional optimization setting, weak and strong convergences coincide. To our knowledge, a proof of existence of a minimizer in the context of $L^2$--regularized problems under System \eqref{eq: sigma.outside.i} is not known. 
	\end{remark}

	\section{Empirical risk minimization} \label{sec: wo.tracking}
	
	\noindent
	Throughout the paper we will focus on neural ODEs given by \eqref{eq: standard.dyn.sys} with $\mathfrak{f}$ as in \eqref{eq: sigma.outside} or \eqref{eq: sigma.inside}.  
	We comment on further extensions on a case-by-case basis. 
	It will be rather convenient to work with the full stacked state trajectory 
	\begin{equation*}
	\*x(t) = \begin{bmatrix}\*x_1(t)\\\vdots\\\*x_N(t)\end{bmatrix},
	\end{equation*}
	and we recall that we make use of the convention \eqref{eq: notation.abuse}, and we recall that $d_u := d\times(d + 1)$ and $d_x := dN$.
	Moreover, given $w \in \R^{d\times d}$ and $b \in \R^{d}$, we shall write
	\begin{equation} \label{eq: form.controls}
	\*w:=	\begin{bmatrix}
    w & & \\
    &\ddots& \\
    & & w  \end{bmatrix} \in \R^{d_x\times d_x}, \hspace{1cm} \*b := \begin{bmatrix}b \\\vdots \\b \end{bmatrix}
 \in \R^{d_x}.
	\end{equation}
	In view of the above discussion and noting \eqref{eq: form.controls}, we will consider stacked neural ODEs in $\R^{d_x}$ such as
	\begin{equation} \label{eq: sigma.outside.i}
	\begin{dcases}
	\dot{\*x}(t) = \sigma(\*w(t) \*x(t) + \*b(t)) &\text{ for } t \in (0,T) \\
	\*x(0) = \*x^0 \in \R^{d_x},
	\end{dcases}
	\end{equation}
	and 
	\begin{equation} \label{eq: sigma.inside.i}
	\begin{dcases}
	\dot{\*x}(t) = \*w(t) \sigma(\*x(t))+\*b(t) &\text{ for } t \in (0,T) \\
	\*x(0) = \*x^0 \in \R^{d_x}.
	\end{dcases}
	\end{equation}	
	In this section, we consider the problem of regularized empirical risk minimization. For simplicity of notation, we henceforth denote the empirical risk by
    \begin{equation} \label{eq: phi.def}
	\mathscr{E}(\*x) := \frac{1}{N} \sum_{i=1}^N \loss\big(P\*x_i,\vec{y}_i\big),
	\end{equation}
	for $\*x \in \R^{d_x}$, where $P:\R^d\to\R^m$ and $\loss(\cdot,\cdot) \in C^0(\R^m\times\mathcal{Y}; \R_+)$ are given -- both will change with respect to the task in question (regression, classification), as discussed in \Cref{eq: softmax}.
	\smallskip
	
	\noindent
	For fixed $\lambda>0$, in this section we will study the behavior when $T\gg1$ of global minimizers $[w_T,b_T]$ to the functional
	\begin{equation} \label{eq: JT}
	J_{\lambda, T}(w,b) = \mathscr{E}(\*x(T)) + \lambda \Big\|[w,b]\Big\|^2_{H^k(0,T;\R^{d_u})}
	\end{equation}
	where $\*x \in C^0([0, T]; \R^{d_x})$ is the unique solution to either \eqref{eq: sigma.inside.i} ($k=0$) or \eqref{eq: sigma.outside.i} ($k=1$) corresponding to the parameters $[w,b]  \in H^k(0, T; \R^{d_u})$, noting \eqref{eq: form.controls}.

	\subsection{Non-negative losses} 
	We begin by considering the case wherein the function $(x,y)\mapsto \loss(Px,y)$ defining the empirical risk in \eqref{eq: phi.def} may attain its minimum $0$. 
	Namely, unless and until stated otherwise, we shall suppose
	
	\begin{assumption} \label{ass: regression.ass}
	We suppose that $P:\R^d\to\R^m$ and $\loss\in C^0(\R^m\times\mathcal{Y};\R_+)$ are such that $\mathscr{E}$ defined in \eqref{eq: phi.def} may attain its minimum $0$.
	\end{assumption}
	
	\noindent
	This is the case for several losses used in practice, including 
	\begin{itemize}
	\item $\ell^p$--losses: $\loss(x,y) = \|x-y\|_{\ell^p}^p$, $p\in\{1,2\}$, with $P:\R^d\to\R^m$ an affine map, or, more generally, losses which are radial functions with respect to $x-y$ (such as those induced by a distance inferred by a norm).
	Such modeling assumptions are typically made in the context of regression tasks ($\vec{y}_i\in\mathcal{Y}\subset\R^m$), wherein by minimizing the empirical risk one looks to interpolate the training data by means of the projected neural ODE flow.
	\smallskip
	
	\item The \emph{multi-class hinge loss}
	\begin{equation*}
	\loss(x,y) = \sum_{\substack{j=1\\j\neq y}}^m \max\left\{0,1-x_y+x_j\right\} \hspace{1cm} x\in\R^m,\, y\in[m],
	\end{equation*} 
	with $P:\R^d\to\R^m$ being a matrix, which may be used for classification tasks.
	\end{itemize} 	
	
	\noindent
	With regard to \Cref{ass: regression.ass}, we define the following notion of interpolation of the dataset $\{\vec{x}_i,\vec{y}_i\}_{i\in[N]}$.
		
	\begin{definition}[Interpolation] \label{def: P-interpolation}
	Let $P:\R^d\to\R^m$ be any non-zero map and let $\loss\in C^0(\R^m\times\mathcal{Y};\R_+)$ be such that \Cref{ass: regression.ass} is satisfied. 
	We say that \eqref{eq: sigma.inside.i} (resp. \eqref{eq: sigma.outside.i}) interpolates the dataset $\left\{\vec{x}_i,\vec{y}_i\right\}_{i\in[N]}$ in some time $T>0$ if there exists a time $T>0$ and some parameters $[w,b] \in L^2(0,T; \R^{d_u})$ (resp. in $H^1(0,T; \R^{d_u})$) such that the unique solution $\*x(\cdot)$ to \eqref{eq: sigma.inside.i} (resp. \eqref{eq: sigma.outside.i}), noting \eqref{eq: form.controls}, satisfies
	\begin{equation*}
	\mathscr{E}(\*x(T)) = 0.
	\end{equation*}
	\end{definition}
	
	\noindent
	Note that this is a slight abuse of terminology, as precisely interpolation would rather refer to having 
	\begin{equation*}
	P\*x_i(T)= \vec{y}_i \hspace{1cm} \text{ for all } \quad i\in [N],
	\end{equation*}
	which is equivalent to the stated definition in the context of losses induced by a distance, but not in the context of the hinge loss. We shall, however, make use of the terminology for cohesion.
	
	Let us also note that by means of an elementary time-scaling, if \eqref{eq: sigma.inside.i} or \eqref{eq: sigma.outside.i} interpolates the dataset in some time $T>0$, it interpolates it in any time, in particular, in time $1$. 
	We will make use of this observation to simplify the subsequent presentation and analysis by assuming interpolation in time $1$ without loss of generality.
	
   	We may state our main result in this context. 
	
	\begin{theorem} \label{thm: no.running}
	Fix $\lambda>0$.
	Suppose that $P:\R^d\to\R^m$ is any non-zero affine map, and suppose that $\loss\in C^0(\R^{m}\times\mathcal{Y};\R_+)$ is such that \Cref{ass: regression.ass} is satisfied.
	Assume that \eqref{eq: sigma.inside.i} (resp. \eqref{eq: sigma.outside.i} with $\sigma$ $1$--homogeneous) interpolates the dataset $\left\{\vec{x}_i, \vec{y}_i\right\}_{i\in[N]}$ in time $1$ in the sense of \Cref{def: P-interpolation}.
	For any $T\geqslant1$ let $\left[w_T, b_T\right] \in L^2(0,T;\R^{d_u})$ (resp. in $H^1(0,T; \R^{d_u})$) be any pair of global minimizers to $J_{\lambda,T}$ defined in \eqref{eq: JT}, and let $\*x_T(\cdot)$ be the unique associated solution to \eqref{eq: sigma.inside.i} (resp. \eqref{eq: sigma.outside.i}), noting \eqref{eq: form.controls}.
	The following properties then hold.
	\begin{enumerate}
	\item There exists a constant $\mathfrak{C}=\mathfrak{C}\left(\{\vec{x}_i, \vec{y}_i\}_{i\in[N]},\lambda\right)>0$ independent of $T$ such that
	\begin{equation*}
	\mathscr{E}(\*x_T(T)) \leqslant \frac{\mathfrak{C}}{T}.
	\end{equation*}
	\item There exists a sequence $\{T_n\}_{n=1}^{\infty}$, with $T_n>0$ and $T_n \xrightarrow[n\longrightarrow \infty]{}\infty$, and some $\*x_\circ \in \R^{d_x}$ with $\mathscr{E}(\*x_\circ)=0$ such that, along a subsequence,
	\begin{equation} \label{eq: features.converge}
	\*x_{T_n}(T_n) \xrightarrow[n\longrightarrow \infty]{} \*x_\circ.
	\end{equation}
	\item 
 	For any $n\geqslant 1$, set
	\begin{align*}
	w_n(t) &:= T_n\, w_{T_n} \left(t\,T_n\right) \hspace{1cm} \text{ for } t \in [0, 1], \\
	b_n(t) &:= T_n\, b_{T_n} \left(t \, T_n\right) \hspace{1cm} \text{ for } t \in [0, 1].
	\end{align*}
	Then along a subsequence,
	\begin{equation*}
	\Big\| \left[w_n,b_n\right] - \left[w^*, b^*\right] \Big\|_{H^k(0,1;\R^{d_u})} \xrightarrow[n\longrightarrow \infty]{} 0,
	\end{equation*}
		where $[w^*, b^*]\in H^k(0,1;\R^{d_u})$ is a solution to the minimization problem
	\begin{align*} \label{opt pbm in thm no.running}
	\inf_{\substack{[w,b] \in H^k(0,1;\R^{d_u})\\ \*x(\cdot) \text{ solves } \eqref{eq: sigma.outside.i} \text{ (resp. \eqref{eq: sigma.inside.i}) in } [0,1] \\ \text{ and } \\ \scalebox{0.7}{$\mathscr{E}(\*x(1))=0$} }} \Big\| [w,b]\Big\|^2_{H^k(0,1;\R^{d_u})}.
	\end{align*}
	\end{enumerate}
	\end{theorem}

	\noindent
	\textit{Idea of proof.} The proof of \Cref{thm: no.running} may be found in \Cref{sec: proof.no.running}. Let us motivate the main underlying idea. 
	
	Under the above assumptions, both \eqref{eq: sigma.outside.i} and \eqref{eq: sigma.inside.i} will be $1$--homogeneous with respect to the parameters $[w(t),b(t)]$. 
	Namely, both \eqref{eq: sigma.outside.i} and \eqref{eq: sigma.inside.i} (noting \eqref{eq: form.controls}) can be written as
	\begin{equation} \label{eq: aux.idea.sys}
	\begin{dcases}
	\dot{\*x}(t) = \mathfrak{f}\left([w(t), b(t)], \*x(t)\right) &\text{ in } (0, T) \\
	\*x(0) = \*x^0,
	\end{dcases}
	\end{equation}
	where $\mathfrak{f}([\alpha w, \alpha b], \*x) = \alpha \mathfrak{f}([w, b], \*x)$ for $\alpha > 0$.
	Whilst in the case of \eqref{eq: sigma.inside.i} this homogeneity property holds for any activation function $\sigma$, we require $\sigma$ to be $1$--homogeneous for neural networks such as \eqref{eq: sigma.outside.i}. 
	This includes rectifiers, but excludes sigmoids.
	
	A simple computation (see \Cref{lem: scaling}) then leads to noting that, given some parameters $u^{1} := [w^1, b^1]$ and the solution $\*x^1$ to 
	\begin{equation}  \label{eq: aux.idea.sys1}
	\begin{dcases}
	\dot{\*x}^1(t) = \mathfrak{f}\left(\left[w^1(t), b^1(t)\right], \*x^1(t)\right) &\text{ in } (0, 1) \\
	\*x^1(0) = \*x^0,
	\end{dcases}
	\end{equation}
	then $u^T(t) := \frac{1}{T} u^1(\frac{t}{T})$ for $t\in [0, T]$ is such that
	$\*x^T(t):= \*x^1(\frac{t}{T})$ solves \eqref{eq:  aux.idea.sys}.
	Under the interpolation assumption, we may find $u^{1} \in H^k(0,T_0; \R^{d_u})$ such that the corresponding solution $\*x^{1}$ satisfies $\mathscr{E}(\*x^{1}(1))=0$, and then use the above scaling and the optimality of $u_T$ to deduce
	\begin{align*}
	J_{\lambda,T}(u_T) \leqslant \mathscr{E}\left(\*x^{1}(1)\right) + \frac{\lambda}{T} \left\|u^{1}\right\|_{H^k(0,1; \R^{d_u})}^2 =\frac{\lambda}{T} \left\|u^{1}\right\|_{H^k(0,1; \R^{d_u})}^2
	\end{align*}
	for $T\geqslant 1$.
	This will imply the decay estimate of $\mathscr{E}$, and combined with some more technical compactness arguments, will yield the remaining convergence results as well.
	
	On another hand, considering the case of \eqref{eq: sigma.inside.i} and thus $k=0$, we see that	
	\begin{align*}
	 &\inf_{\substack{u_T=[w_T,b_T] \in L^2(0,T;\R^{d_u}) \\ \*x_T(\cdot) \text{ solves } \eqref{eq: aux.idea.sys}}} \mathscr{E}(\*x_T(T)) + \lambda \int_0^T \left\|u_T(t)\right\|^2 \diff t \\
	 &=\inf_{\substack{u_T=[w_T,b_T] \in L^2(0,T;\R^{d_u}) \\ \*x_T(\cdot) \text{ solves } \eqref{eq: aux.idea.sys}}} \mathscr{E}(\*x_T(T)) + \frac{\lambda}{T} \int_0^1 \left\| Tu_T(sT) \right\|^2\diff s \\
	 &= \inf_{\substack{u^1=[w^1, b^1] \in L^2(0,1;\R^{d_u}) \\ \*x^1(\cdot) \text{ solves } \eqref{eq: aux.idea.sys1}}} \mathscr{E}(\*x^1(1)) + \frac{\lambda}{T} \int_0^1 \left\|u^1(s)\right\|^2\diff s. 
	 \end{align*}
	 This computation indicates that one may consider the behavior when $T\longrightarrow\infty$ for fixed $\lambda>0$ and that when $\lambda\searrow0$ for fixed $T>0$ in the same fashion. 
	 Although this scaling is specific to the $L^2$--regularization setting, it motivates completing \Cref{thm: no.running} with the following result.
	 
	 \begin{theorem} \label{thm: no.running.lambda}
	 Under the assumptions of \Cref{thm: no.running}, fix $T>0$, and for any $\lambda>0$, let $\left[w_\lambda, b_\lambda\right] \in L^2(0,T;\R^{d_u})$ (resp. $H^1(0,T; \R^{d_u})$) be any pair of global minimizers to $J_{\lambda,T}$ defined in \eqref{eq: JT}, and let $\*x_\lambda$ be the unique associated solution to \eqref{eq: sigma.inside.i} (resp. \eqref{eq: sigma.outside.i}), noting \eqref{eq: form.controls}.
	 The following properties then hold.
	\begin{enumerate}
	\item There exists a constant $\mathfrak{C}=\mathfrak{C}\left(\{\vec{x}_i, \vec{y}_i\}_{i\in[N]}, T\right)>0$ independent of $\lambda>0$ such that
	\begin{equation*}
	\mathscr{E}(\*x_\lambda(T)) \leqslant \mathfrak{C}\,\lambda.
	\end{equation*}
	\item There exists a sequence $\{\lambda_n\}_{n=1}^{\infty}$, with $\lambda_n>0$ and $\lambda_n \xrightarrow[n\to\infty]{} 0$, and some $\*x_\circ \in \R^{d_x}$ with $\mathscr{E}(\*x_\circ)=0$ such that, along a subsequence
	\begin{equation*}
	\*x_{\lambda_n}(T) \xrightarrow[n\longrightarrow\infty]{} \*x_\circ.
	\end{equation*}
	\item Moreover, along a subsequence,
	\begin{equation*}
	\Big\| \left[w_{\lambda_n},b_{\lambda_n}\right] - \left[w^*, b^*\right] \Big\|_{H^k(0,T;\R^{d_u})} \xrightarrow[n\longrightarrow\infty]{} 0 ,
	\end{equation*}
	where $[w^*, b^*]^\tr\in H^k(0,T;\R^{d_u})$ is a solution to the minimization problem
	\begin{align*}
	\inf_{\substack{[w,b] \in H^k(0,T;\R^{d_u})\\ \*x(\cdot) \text{ solves } \eqref{eq: sigma.outside.i} \text{ (resp. \eqref{eq: sigma.inside.i})}\\ \text{ and } \\ \scalebox{0.7}{$\mathscr{E}(\*x(T))=0$} }} \Big\| [w,b]\Big\|^2_{H^k(0,T;\R^{d_u})}.
	\end{align*}
	\end{enumerate}
	 \end{theorem}
	 
	 \begin{remark}[Homogeneity]        \begin{itemize}
   
       \item The impediment that appears in the results presented above when considering neural ODEs of the form 
       \begin{equation} \label{eq: compound.neural.ode}
       \begin{dcases}
       \dot{\*x}(t) = \*w^1(t) \sigma\left(\*w^2(t) \*x(t) + \*b^2(t)\right) + \*b^1(t) &\text{ in } (0,T) \\
       \*x(0) = \*x^0,
       \end{dcases}
       \end{equation}
       where $w^1(t) \in \R^{d\times d_{\mathrm{hid}}}$ and $w^2 \in \R^{d_{\mathrm{hid}}\times d}$, with $\*w^1(t)$ and $\*w^2(t)$ defined after stacking the states $\{\*x_i(t)\}_{i\in[N]}$ within $\*x(t)$ following convention \eqref{eq: notation.abuse}, is the lack of homogeneity (and thus scaling) of the dynamics with respect to the parameters.
       Let us elaborate by focusing on $\*b^2\equiv\*b^1\equiv0$ for simplicity.
       Let $\*x_T$ denote the solution to \eqref{eq: compound.neural.ode} with parameters $[\*w_T^1, \*w_T^2]$. We set 
       \begin{equation*}
       \*x_1(t) := \*x_T(tT) \hspace{1cm} \text{ for } t\in[0,1],
       \end{equation*}
       and we see that, assuming $\sigma$ is $\alpha$--homogeneous with $\alpha>0$, $\*x_1$ solves \eqref{eq: compound.neural.ode} on $[0,1]$ with parameters 
       \begin{equation*}
       \left[w^1_1, w^2_1\right]:=\left[T^p w^1_T(tT), T^{\sfrac{q}{\alpha}} w^2_T(tT)\right] \hspace{1cm} \text{ for } t\in(0,1),
       \end{equation*}
       with $p+q=1$, noting \eqref{eq: form.controls}. Solely looking at the squared $L^2$--norms of the parameters, we see that
        \begin{align*}
        \int_0^T \|w^1_T(t)\|^2\diff t + \int_0^T \|w^2_T(t)\|^2\diff t &= T \int_0^1\left\|w^1_T(sT)\right\|^2 \diff s + T\int_0^1\left\|w^2_T(sT)\right\|^2\diff s \\
        &=T^{1-2p}\int_0^1 \left\|w_1^1(s)\right\|^2\diff s + T^{1-\sfrac{2q}{\alpha}} \int_0^1 \left\|w_1^2(s)\right\|^2\diff s.
        \end{align*}
      We see that, in order to have a scaling factor which decays when $T\longrightarrow\infty$, a property which is a cornerstone of our proof, we would simultaneously need $1-2p<0$ and $1-\frac{2q}{\alpha}<0$, i.e. $p>\frac12$ and $q>\frac{\alpha}{2}$. Due to the constraint $p+q=1$, this would entail $\alpha<1$, which means that $\sigma$ cannot be globally Lipschitz continuous. 
       \smallskip
       
       \item Let us, in line with what is discussed just above, check what is the  needed degree of homogeneity of $\sigma$ in the case of System \eqref{eq: sigma.outside.i} to ensure that the results of \Cref{thm: no.running} hold. 
       Let us thus assume that $\sigma$ is $\alpha$--homogeneous with $\alpha>0$. Then, setting $\*x_1(t):=\*x_T(tT)$ for $t\in[0,1]$ where $\*x_T$ solves \eqref{eq: sigma.outside.i} with parameters $[w_T,b_T]$, we see that $\*x_1(t)$ solves \eqref{eq: sigma.outside.i} on $[0,1]$ with parameters 
       $[w_1(t),b_1(t)]=\left[T^{\sfrac{1}{\alpha}}w_T(tT), T^{\sfrac{1}{\alpha}}b_T(tT)\right]$ for $t\in(0,1)$. Then, looking at the $L^2$--norms of the parameters, we see that
       \begin{align*}
       \int_0^T \left\|w_T(t)\right\|^2\diff t + \int_0^T\left\|b_T(t)\right\|^2\diff t &= T\int_0^1 \|w_T(sT)\|^2\diff s + T\int_0^T \|b_T(sT)\|^2\diff s \\
       &= T^{1-\sfrac{2}{\alpha}}\int_0^1 \|w_1(s)\|^2 \diff s + T^{1-\sfrac{2}{\alpha}}\int_0^1\|b_1(s)\|^2 \diff s,
       \end{align*}
       and thus, our methodology really needs $\alpha\in[1,2)$. 
       Note that this brief computation also indicates the precise proximity of $\mathscr{E}(\*x_T(T))$ to $0$ for $\sigma$ is $\alpha$--homogeneous, which is of the order of $\mathcal{O}\left(T^{1-\sfrac{2}{\alpha}}\right)$.
       \end{itemize}
	 \end{remark}
	 
\begin{remark}[The output layer $P$] We note that the output layer parameters given by the affine map $P$ are fixed, but in general arbitrary.
       This is due to the fact that if we were to optimize $P$ as well, we would have to ensure that the optimal $P$ is bounded with respect to the limiting hyper-parameter ($\lambda$ or $T$). This in turn could perhaps be ensured if we were to regularize the output layer as well, but would, in turn, be an impediment to the scaling arguments we use in all proofs since then the parameter regularization norm would not scale polynomially with $T$.
       \end{remark}
	
	\subsection{Positive losses} 
	
	We henceforth consider the standard setting of classification tasks, wherein the labels $\vec{y}_i$ take values in a set of $m\geqslant 2$ classes -- unless stated otherwise, we consider $\vec{y}_i\in[m]$ for all $i\in[N]$. 
	We will focus on the cross-entropy loss in \eqref{eq: phi.def}, which we recall, when evaluated along the neural ODE output, reads
	\begin{equation} \label{eq: cross.entropy.def}
	\loss(P\*x_i(T), \vec{y_i}) := -\log\left(\dfrac{e^{P\*x_i(T)_{\vec{y}_i}}}{\sum_{j=1}^m e^{P\*x_i(T)_j}}\right).
	\end{equation}
	Here $P: \R^d \longrightarrow \R^m$ is an affine map, with more assumptions made precise later on.
	
	An important feature of the cross-entropy loss is the fact that it is not coercive with respect to the first variable -- namely, as $P\*x_i(T)_{\vec{y}_i}$ goes to infinity, the loss goes to zero. In particular, the infimum $0$ is never attained.
	This is in line with intuition regarding the classification task, as one looks to separate the features with respect to their individual class in the label space $\R^m$.
		
	The problem consisting of classifying a given dataset is closely tied to the following rather intuitive notion of separability, which we will need in subsequent results.

	\begin{definition}[Separability] \label{def: separability}
	Let $P: \R^d\longrightarrow \R^m$ be any non-zero affine map. 
	We say that \eqref{eq: sigma.inside.i} (resp. \eqref{eq: sigma.outside.i}) separates the dataset $\left\{\vec{x}_i, \vec{y}_i\right\}_{i\in[N]}$ with respect to $P$ if there exists a time $T>0$ and some parameters $[w,b] \in L^2(0,T; \R^{d_u})$ (resp. in $H^1(0,T; \R^{d_u})$) such that the unique solution $\*x(\cdot)$ to \eqref{eq: sigma.inside.i} (resp. \eqref{eq: sigma.outside.i}) satisfies
	\begin{equation*}
	P\*x_i(T)_{\vec{y}_i} - \max_{\substack{j\in[N]\\j\neq \vec{y}_i}} P\*x_i(T)_j >0 \hspace{1cm} \text{ for all } i \in [N].
	\end{equation*}
	\end{definition}
		
	\noindent
	In other words, a parametrized neural ODE flow separates the given dataset if the corresponding \emph{margin} $\gamma_{[w,b]}$, defined as
	\begin{equation} \label{eq: margin.def}
	\gamma_{[w,b]} := \min_{i\in[N]}\left(P\*x_i(T)_{\vec{y}_i} - \max_{\substack{j\in[N]\\j\neq \vec{y}_i}} P\*x_i(T)_j\right)
	\end{equation}
	is positive.
	We may now state our main result in the classification context, which entails a quantitative rate of decay as $T\longrightarrow\infty$ of the training error with cross-entropy loss for ReLU activated neural ODEs.
	
	\begin{theorem} \label{thm: thm.classification.lambda}
	Let $\left\{\vec{x}_i, \vec{y}_i\right\}_{i\in[N]}$ be a given dataset with $\vec{x}_i\in\R^d$ and $\vec{y}_i\in[m]$.
	Let $\lambda>0$ be fixed, and let $\mathfrak{Q}:\R^{d_x} \longrightarrow \R^d$ be a non-zero affine map such that
$\mathfrak{Q}\vec{x}_i \geqslant 0$ for $i \in [N]$.
	Set (recall convention \eqref{eq: notation.abuse})
	\begin{equation*}
	\*x^0_i := \mathfrak{Q}\vec{x}_i \hspace{1cm} \text{ for } i\in[N],
	\end{equation*}
	and let $P\in\R^{m\times d}$ be any non-zero matrix such that System \eqref{eq: sigma.outside.i}, with $\sigma(x)=\max\{x,0\}$, separates the dataset $\left\{\*x^0_i, \vec{y}_i\right\}_{i\in[N]}$ with respect to $P$ in some time $T_0>0$ as per \Cref{def: separability}, and let $\gamma$ denote the associated margin as defined in \eqref{eq: margin.def}.
	For any $T\geqslant T_0$, let $\left[w_T, b_T\right]\in H^1(0,T;\R^{d_u})$ be any pair of global minimizers to $J_{\lambda,T}$ defined in \eqref{eq: JT}--\eqref{eq: cross.entropy.def}, and let $\*x_T(\cdot)$ be the associated unique solution to \eqref{eq: sigma.outside.i} with $\sigma(x)=\max\{x,0\}$.
Then, there exists a constant $\mathfrak{C}=\mathfrak{C}\left(\{\vec{x}_i, \vec{y}_i\}_{i\in[N]}, \lambda\right)>0$ independent of $T>0$ such that
	\begin{equation} \label{eq: cross.entropy.estimate}
	\mathscr{E}(\*x_T(T)) \leqslant \log \left( 1+ (m-1) e^{-\gamma\, e^{\frac{T^{\alpha}}{2}}} \right) + \mathfrak{C}\,T^{2\alpha-1}
	\end{equation}	
	holds for any $\alpha \in \left(0,\frac{1}{2}\right)$.
	\end{theorem}
	
	\noindent
	By using a Taylor expansion, one sees that the first term in the upper bound is negligible when $T\gg1$. Thus, the training error $\mathscr{E}(\*x_T(T))$ is at most of the order $\mathcal{O}\left(T^{2\alpha-1}\right)$. 
	\smallskip

\noindent
We note that the above theorem is very specific to neural ODEs of the form \eqref{eq: sigma.outside.i} with ReLU activations, and the specific form of the cross-entropy loss, from which the first term in the estimate \eqref{eq: cross.entropy.estimate} is derived. This is due to the proof strategy, which relies on using the positivity of the right hand side to, in some sense, obtain a linear equation for the projected output features for some auxiliary parameters constructed within the proof, and thus have an explicit solution for these parameters of the form $\sim e^t$. This stimulates the appearance of the second exponential within the $\log$ in \eqref{eq: cross.entropy.estimate}.
\medskip

\noindent
Much like what we observed in the setting of losses which attain their minimum, we can expect to link the limit as $T$ goes to infinity with the convergence of the regularization path, namely the limit as $\lambda \searrow0$. This is depicted in the following theorem.

	\begin{theorem} \label{thm: thm.classification.lambda.reg}
	Under the assumptions of \Cref{thm: thm.classification.lambda}, fix $T\geqslant T_0$ and for any $\lambda>0$ let $[w_\lambda, b_\lambda]\in H^1(0,T; \R^{d_u})$ be any pair of global minimizers to $J_{\lambda,T}$ defined in \eqref{eq: JT}--\eqref{eq: cross.entropy.def}, and let $\*x_\lambda(\cdot)$ be the associated unique solution to \eqref{eq: sigma.outside.i} with $\sigma(x)=\max\{x,0\}$.
Then, there exists a constant $\mathfrak{C}=\mathfrak{C}\left(\{\vec{x}_i,\vec{y}_i\}_{i\in[N]}, T\right)>0$ independent of $\lambda>0$ such that
	\begin{equation*}
	\mathscr{E}(\*x_\lambda(T)) \leqslant \log \left( 1+ (m-1) e^{-\gamma\, e^{\frac{\lambda^{-\alpha}}{2}}} \right) + \mathfrak{C}\,\lambda^{-2\alpha+1}
	\end{equation*}	
	holds for any $\alpha \in \left(0,\frac{1}{2}\right)$.
	\end{theorem}	
				
	\begin{remark}[Absence of margin convergence] \label{rem: absence.margin.conv}
	We note that, unlike \Cref{thm: no.running} (and \Cref{thm: no.running.lambda}), in \Cref{thm: thm.classification.lambda} (and \Cref{thm: thm.classification.lambda.reg}) we do not provide any result regarding the behavior of the optimal parameters $[w_T,b_T]$ as $T\longrightarrow\infty$ (resp. $[w_\lambda,b_\lambda]$ as $\lambda\searrow0$). Let us elaborate on this absence in the case $\lambda\searrow0$. 
	\begin{itemize}
	\item
	In \citep{wei2019regularization} (see also \citep{rosset2003margin}), the authors show that for the perceptron without bias\footnote{In fact, the result holds for a multi-layer perceptron of arbitrary but fixed depth -- we concentrate on this simpler scenario for presentational purposes.}
	$\Phi(x,u):=w^2\sigma(w^1x)$, 
	where $\sigma$ is, say, $1$--homogeneous, $u=\left[w^1,w^2\right]$ with $w^2\in\R^{m\times d_{\text{hid}}}$ and $w^1\in\R^{d_{\text{hid}}\times d}$, any global minimizer $u_\lambda$ to 
	\begin{equation*}
	J_\lambda(u):=\frac{1}{N}\sum_{i=1}^N-\log\left(\frac{e^{\Phi(\vec{x}_i, u)_{\vec{y}_i}}}{\sum_{j=1}^m e^{\Phi(\vec{x}_i, u)_j}}\right)+\lambda\|u\|^2
	\end{equation*}
	is such that 
	\begin{equation*}
	\gamma_{\overline{u}_\lambda}\xrightarrow[\lambda\searrow0]{} \gamma^*,
	\end{equation*}
	where $\overline{u}_\lambda = \sfrac{u_\lambda}{\|u_\lambda\|}$, $\gamma_{\overline{u}_\lambda}$ is the margin of $\Phi(\cdot, \overline{u}_\lambda)$ as defined in \eqref{eq: margin.def}, and $\gamma^*>0$ is the max-margin: 
	\begin{equation*}
	\gamma^* := \max_{\|u\|\leqslant1} \gamma_u.
	\end{equation*}
	A set of parameters $u^*$ maximizing the margin are then called a \emph{max-margin} solution (here we focus on $\ell^2$--margins; the $\ell^1$--case has also been studied in the machine literature).
	The proof of this fact relies crucially on the specific form of the perceptron, as the map $u\longmapsto\Phi(x,u)$ is positively homogeneous (of degree $2$) for any $x\in\R^d$. By using the exponential scaling of the cross entropy, $J_\lambda(u_\lambda)$ can be lower bounded roughly by $e^{-\|u_\lambda\|^2\gamma_\lambda}$, and also has an upper bound that scales with $e^{-\|u_\lambda\|^2\gamma^*}$. 
	Again making use of the fact that $u\longmapsto\Phi(x,u)$ is positively homogeneous, one can ensure that $\|u_\lambda\|\longrightarrow\infty$ as $\lambda\searrow0$, and then take $\|u_\lambda\|$ large enough so that $\gamma^*-\gamma_\lambda$ vanishes.
	
	In the context of neural ODEs such as \eqref{eq: sigma.outside.i} (and the corresponding ResNet, for that matter), to replicate such ideas, one would need to ensure that the map $[w,b]\longmapsto P\*x(T)$, where $\*x$ solves \eqref{eq: sigma.outside.i}, is positively homogeneous (of some degree). 
	In fact, let us see under which conditions the following relaxed problem, can be solved. We seek a non-decreasing continuous function $f:(0,\infty)\to(0,\infty)$ such that 
	\begin{equation*}
	\*x_\alpha = f(\alpha)\*x \hspace{1cm} \text{ for all } \alpha>0, 
	\end{equation*}
	where $\*x$ solves \eqref{eq: sigma.outside.i} with parameters $[\*w,0]$ (noting \eqref{eq: form.controls}) and $\*x_\alpha$ solves \eqref{eq: sigma.outside.i} with parameters $[\alpha\*w, 0]$. If such an $f$ were to exist, then by differentiating one sees that 
	\begin{equation*}
	\dot{\*x}_\alpha(t) = f(\alpha)\dot{\*x}(t) = f(\alpha)\sigma(\*w(t)\*x(t)).
	\end{equation*}
	Using the fact that $\dot{\*x}_\alpha(t)$ solves \eqref{eq: sigma.outside.i} with parameters $[\alpha\*w,0]$, we moreover see that
	\begin{equation*}
	\sigma(\alpha\*w(t)f(\alpha)\*x(t)) = f(\alpha)\sigma(\*w(t)\*x(t)).
	\end{equation*}
	Now we see that the above relation translates to having
	\begin{equation} \label{eq: f.impossible}
	\sigma(\alpha f(\alpha)s) = f(\alpha) \sigma(s) \hspace{1cm} \text{ for all } \alpha>0,\, s\in\R.
	\end{equation}
	This implies $\sigma(0)=0$, and would also be an obstruction to the Lipschitz continuity of $\sigma$ at $x=0$ (indeed, fix $s\in\R$ such that $\sigma(s)\neq0$, and let $\alpha\searrow0$). Thus, the only increasing function $f:(0,\infty)\to(0,\infty)$ such that \eqref{eq: f.impossible} holds and $\sigma\in\Lip(\R)$ is the zero function.
	\smallskip
	
	\item The issue presented just above does not occur when the only trainable parameter is the additive bias $\*b(t)$. Indeed, if for instance $\*w(t)=\*w_\circ=\mathrm{diag}(w_\circ)$ is fixed, $\*x(t)$ solves \eqref{eq: sigma.outside.i} with parameters $[\*w_\circ,\*b(t)]$ and $\*x_\alpha(t)$ solves \eqref{eq: sigma.outside.i} with parameters $[\*w_\circ,\alpha\*b(t)]$, then one readily sees that 
	$P\*x_{\alpha, i}(t)=\alpha P\*x_i(t)$ for all $\alpha>0$ whenever $P\vec{x}_i=0$ for $i\in[N]$ (this is the case when augmenting the data $\vec{x}_i$ and projecting orthogonally onto the added euclidean space via $P$). Hence, one may show, by adapting the techniques of \citep{wei2019regularization}, that the margin $\gamma_{[w_\circ,\overline{b}_T]}$ corresponding to normalized global minimizers $\overline{b}_T=\sfrac{b_T}{\|b_T\|}$, converges to the max-margin 
	\begin{equation*}
	\gamma^*:=\sup_{\|b\|_{H^1(0,1;\R^{d_u})}\leqslant1}\gamma_{\left[w_\circ,\overline{b}\right]}
	\end{equation*}
	as $T\longrightarrow\infty$ (with an analog result when $\lambda\searrow0$ and $T$ is fixed). We omit the statement and proof of this result due to the possible limitations of not optimizing the weights in practical contexts.
	\end{itemize}
	In view of this discussion, one sees that in order to deduce what the limit for the margin in the fully trained neural ODE (and ResNet) setting would be, one would likely have to conceive a different strategy to that of \citep{rosset2003margin, wei2019regularization} which relies on the homogeneity of the parameter to output map.
	\end{remark}
	
\begin{remark}[Different losses]
	We note that in the context of binary classification, one could consider $\vec{y}_i \in \{-1, 1\}$ and train  with the logistic loss in \eqref{eq: phi.def}
	\begin{equation*}
	\loss(P\*x_i(T), \vec{y}_i) := \log\left(1+e^{-\vec{y}_iP\*x_i(T)}\right),
	\end{equation*}
	where $P:\R^d\to\R$ is an affine map.	
	The statements of \Cref{thm: thm.classification.lambda} and \Cref{thm: thm.classification.lambda.reg} also hold in the context of the logistic loss defined above by a straightforward repetition of our proofs.
	
	Another class of losses when $\vec{y}_i\in[m]$ is the purely exponential loss
	\begin{equation*}
	\loss(P\*x_i(T),\vec{y}_i) := e^{-\vec{y}_iP\*x_i(T)}.
	\end{equation*}
	Note that, again, by a slight adaptation of our techniques, one can show an analog variant of \Cref{thm: thm.classification.lambda} (and \Cref{thm: thm.classification.lambda.reg}) with an estimate of the form
	\begin{equation*}
	\mathscr{E}(\*x_T(T)) \leqslant e^{-\gamma e^{\frac{T^\alpha}{2}}} + \mathfrak{C}\,T^{2\alpha-1}.
	\end{equation*} 
	\end{remark}

	\section{Augmented empirical risk minimization} \label{sec: turnpike.main}
	
	\noindent
	We are now interested in seeing whether one can obtain better quantitative estimates for the decay of the training error $\mathscr{E}$ to $0$ with respect to the time horizon ($\sim$ number of layers) $T>0$ -- namely, improve the $\mathcal{O}\left(\frac{1}{T}\right)$--rate of convergence of the training error to $0$ manifested in \Cref{thm: no.running} and \Cref{thm: thm.classification.lambda}. 
	We provide a proof of exponential stability in the setting of an augmented supervised learning problem, concentrating solely on $L^2$--parameter regularization (and hence we shall assume the existence of a minimizer in the setting of \eqref{eq: sigma.outside.i}). 
	We shall also consider a $\BV$--parameter regularization, which suffices to ensure the existence of minimizers, but only provide a polynomial convergence of the averages of the training error and optimal parameters to zero as $T\longrightarrow\infty$, whilst ensuring a uniform bound on the total variation of the optimal parameters. 
	\smallskip
	
	\noindent
	Unless stated otherwise, we will henceforth solely concentrate on empirical risks 
	\begin{equation}  \label{eq: loss.turnpike}
	\mathscr{E}(\*x) := \frac{1}{N} \sum_{i=1}^N \loss(P\*x_i,\vec{y}_i) \hspace{1cm} \text{ for } \*x \in \R^{d_x},
	\end{equation}
	induced by a loss satisfying the following condition.
	
	\begin{assumption} \label{ass: loss.turnpike}
	We assume that $\mathcal{Y}\subset\R^m$ and $\loss\in C^0(\R^m\times\mathcal{Y};\R_+)$ is such that there exist constants $c>0$ (possibly depending on $m$) and $\alpha>0$ such that
	\begin{equation*}
	\loss(x,y) \leqslant c\, \|x-y\|_{\ell^2}^\alpha \hspace{1cm} \text{ for } x,y\in\R^m.
	\end{equation*}
	\end{assumption}
	
	\noindent
	Typical examples of such losses are $\loss(x,y)=\|x-y\|_{\ell^p}^p$ for $p\geqslant1$ and $\mathcal{Y}\subset\R^m$; note however that this assumption excludes the log-loss and the cross-entropy loss. 
	In \eqref{eq: loss.turnpike}, $P\in\Lip(\R^d; \R^m)$ is any given surjective and non-zero map, which, in the context of regression tasks, is additionally an affine map, while in the context of binary classification, e.g. $\vec{y}_i\in\mathcal{Y}=\{-1,1\}$, may be an affine map composed with a thresholding nonlinearity with range $[-1,1]$. 
	\smallskip
	
	\noindent
	Inspired from insights in optimal control theory, for fixed $\lambda>0$, we will study the behavior when $T\gg 1$ of global minimizers to the functional
	\begin{equation} \label{eq: time.dep.func}
	J_{T}(w,b) := \mathscr{E}(\*x(T))+\frac{1}{N}\int_0^T \|\*x(t)-\overline{\*x}\|^2 \diff t +\lambda  \Big\|[w,b]\Big\|_{L^2(0,T;\R^{d_u})}^2,
	\end{equation}
	with $\mathscr{E}$ as in \eqref{eq: loss.turnpike}--\Cref{ass: loss.turnpike}, and where $\overline{\*x}_i\in P^{-1}(\{\vec{y}_i\})$ for all $i\in[N]$ are arbitrary but fixed. 
	
	We note that, contrary to the case where we minimize the training error at the final time $T$, here the same scaling does not appear, which allows us to deduce an equivalence with $\lambda\longrightarrow0$. Hence, we will solely be interested in the behavior when $T\gg1$. In fact, we ought to expect a result of slightly different nature to \Cref{thm: no.running}. 
	This is because the trajectory tracking term introduces a stronger time-scale in the behavior of the optimization problem as $T\gg1$. 
	To see this, consider the neural ODE \eqref{eq: sigma.inside.i} for simplicity, and note that 
	 \begin{align}
	 &\inf_{\substack{u_T \in L^2(0,T;\R^{d_u}) \\ \*x_T \text{ solves } \eqref{eq: aux.idea.sys}}} \mathscr{E}(\*x_T(T))+\frac{1}{N}\int_0^T \|\*x_T(t)-\overline{\*x}\|^2\diff t +  \lambda \int_0^T \left\|u_T(t)\right\|^2 \diff t  \nonumber \\
	 &= \inf_{\substack{u_T\in L^2(0,T;\R^{d_u}) \\ \*x_T \text{ solves } \eqref{eq: aux.idea.sys}}} \mathscr{E}(\*x_T(T)) + \frac{T}{N} \int_0^1 \left\|\*x_T\left(\frac{s}{T}\right)-\overline{\*x}\right\|^2 \diff s + \frac{\lambda}{T} \int_0^1 \left\|Tu_T(sT)\right\|^2 \diff s \nonumber\\
	 &= \inf_{\substack{u^1 \in L^2(0,1;\R^{d_u}) \\ \*x^1 \text{ solves } \eqref{eq: aux.idea.sys1}}} \mathscr{E}\left(\*x^1(1)\right) + \frac{T}{N} \int_0^1 \|\*x^1(s)-\overline{\*x}\|^2\diff s + \frac{\lambda}{T} \int_0^1 \left\| u^1(s) \right\|^2\diff s \label{eq: 3.20}.
	  \end{align}
	We see that, unlike \Cref{thm: no.running}, the trajectory tracking term in \eqref{eq: 3.20} carries significance when $T\gg 1$.	
	\smallskip
	
	\noindent
	We will require the following \emph{controllability} notion, which is rather natural in the context of the result that follows.
	
	\begin{definition}[Controllability with linear cost]
	\label{def: ctrl}
	We say that 
	\begin{itemize}
	\item System \eqref{eq: sigma.inside.i} is \emph{controllable with linear cost} if for any $T>0$ and $r>0$ there exists a constant $\mathfrak{C}(T,r)>0$ such that for any $\*x^0\in\R^{d_x}$ and $\*x^1\in\R^{d_x}$ satisfying $\left\|\*x^0-\*x^1\right\|\leqslant r$, there exists a pair of parameters $[w,b] \in L^2(0,T; \R^{d_u})$ for which the corresponding unique solution $\*x(\cdot)$ to \eqref{eq: sigma.inside.i}, noting \eqref{eq: form.controls}, satisfies $\*x(T) = \*x^1$, and 
	\begin{align}  \label{eq: linear.control.cost.1} 
	\Big\|[w,b]\Big\|_{L^2(0, T; \R^{d_u})} \leqslant \mathfrak{C}(T,r) \left\|\*x^0-\*x^1\right\| 
	\end{align}
	holds.

	\smallskip
	\item System \eqref{eq: sigma.outside.i} is \emph{controllable with linear cost}  if for any $T>0$ and $r>0$ there exists a constant $\mathfrak{C}(T,r)>0$ such that for any $\*x^0\in\R^{d_x}$ and $\*x^1\in\R^{d_x}$ satisfying $\left\|\*x^0-\*x^1\right\|\leqslant r$, there exists a pair of parameters $[w,b]\in C^0([0,T];\R^{d_u})\cap\BV(0,T;\R^{d_u})$  for which the corresponding unique solution $\*x(\cdot)$ to \eqref{eq: sigma.outside.i}, noting \eqref{eq: form.controls}, satisfies $\*x(T) = \*x^1$, and
	\begin{align}
	\Big\|[w,b]\Big\|_{L^\infty(0, T; \R^{d_u})}+\Big\|[w,b]\Big\|_{\BV(0, T; \R^{d_u})} \leqslant \mathfrak{C}(T,r) \left\|\*x^0-\*x^1\right\|  \label{eq: linear.control.cost.2} 
	\end{align}
	holds.
	\end{itemize}
	\end{definition}
	
	\noindent
	We refer to \Cref{prop_Nleqd+1} for further analysis and comments regarding \Cref{def: ctrl}, in particular regarding the amplitude estimates \eqref{eq: linear.control.cost.1} -- \eqref{eq: linear.control.cost.2}. Note that \eqref{eq: linear.control.cost.2} is a weaker condition than assuming an $H^1(0,T;\R^{d_u})$--estimate due to the Sobolev embedding. On another hand, due to the homogeneity of the dynamics, one readily sees (as per \Cref{lem: scaling}) that the controllability assumption holds in any time $T>0$ if it holds in some time $T_0>0$.

	We refer to  \eqref{eq: linear.control.cost.1} -- \eqref{eq: linear.control.cost.2} as \emph{linear estimates on the cost} of $[w,b]$ as, in addition to the linear growth with respect to $\left\|\*x^0-\*x^1\right\|$ such estimates are a hallmark of linear controlled dynamical systems of the form $\dot{x}(t)=Ax(t)+Bu(t)$, where $u(t)\in\R^m$ is the control, $x(t)\in\R^n$ is the state, with $A\in\R^{n\times n}$ and $B\in\R^{n\times m}$ being given matrices (see \citep{zuazua2007controllability}).
	
	We are in a position to state our main result in the context of the augmented supervised learning problem consisting of minimizing \eqref{eq: time.dep.func}.
		
	\begin{theorem}[Exponential stability] 
	\label{thm: turnpike.P}
	Fix $\lambda>0$, let $P \in \Lip(\R^d;\R^m)$ be any given non-zero surjective map, and let $\overline{\*x} \in \R^{d_x}$ with $\overline{\*x}_i \in P^{-1}(\{\vec{y}_i\})$ for $i\in[N]$ be arbitrary but fixed.
	Suppose that system \eqref{eq: sigma.inside.i} (resp. \eqref{eq: sigma.outside.i} with $\sigma$ $1$--homogeneous) is controllable with linear cost in the sense of \Cref{def: ctrl}. 
	Then, there exists $T^*>0$ and constants $\mathfrak{C}=\mathfrak{C}\left(\{\vec{x}_i, \vec{y}_i\}_{i\in[N]},\lambda,N,\alpha, P\right)>0$ and $\mu=\mu\left(\{\vec{x}_i, \vec{y}_i\}_{i\in[N]},\lambda,N,\alpha\right)>0$ such that for any $T\geqslant T^*$, any pair of parameters $\left[w_T, b_T\right] \in L^2(0,T; \R^{d_u})$ minimizing $J_T$ defined in \eqref{eq: time.dep.func}, and the corresponding unique solution $\*x_T(\cdot)$ to \eqref{eq: sigma.inside.i} (resp. \eqref{eq: sigma.outside.i}) satisfy
	\begin{equation*} 
	\big\|w_T(t)\big\|+\big\|b_T(t)\big\| \leqslant \mathfrak{C}\,e^{-\mu t}
	\end{equation*}
	for a.e. $t \in [0,T]$ and
	\begin{equation*} 
	\mathscr{E}(\*x_T(t)) + \|\*x_T(t)-\overline{\*x}\| \leqslant \mathfrak{C}\,e^{-\mu t}
	\end{equation*}
	for all $t \in [0,T]$.
	\end{theorem}	
	
	\noindent
	The convergence/stability rate entailed by \Cref{thm: turnpike.P} is not only noticeably stronger than those in \Cref{thm: no.running} and \Cref{thm: thm.classification.lambda}, but the estimate holds in any time $t\in[0,T]$ (i.e., at every layer when viewed from the ResNet perspective) and not only for the output features.
	
	\Cref{thm: turnpike.P} can be proven by following the framework presented\footnote{As discussed in the introduction, \Cref{thm: turnpike.P} is a specific manifestation of the so-called \emph{turnpike property}, a paradigm dating back to the works of John von Neumann (\citep{von1945model}), and works in economics by Paul Samuelson et al. (\citep{Samuelson1}).
A turnpike theory under smallness assumptions on the inputs and/or targets, combining the Pontryagin Maximum Principle, linearization arguments and precise estimates on Riccati equations, and covering a wide variety of nonlinear optimal control problems is developed in \citep{trelat2015turnpike} -- with extensions to Lipschitz nonlinearities and avoiding smallness conditions found in \citep{esteve2020turnpike}. } in \citep{esteve2020turnpike}. 
We provide most of the arguments in \Cref{sec: proof.turnpike} due to technical changes in the underlying model.

	\begin{remark}[Existence of minimizers, $\BV$--regularization] \label{rem: BV.need}
	In \Cref{thm: turnpike.P} we have subjacently assumed the existence of a minimizer of \eqref{eq: time.dep.func} for the neural ODE \eqref{eq: sigma.outside.i}. By solely regularizing the $L^2$--norm of the parameters, we are, a priori, not aware how to ensure the strong convergence in $L^1$ of a minimizing sequence of parameters, which would suffice for applying the direct method in the calculus of variations. 
	On the other hand, using a Sobolev $H^1$--regularization would entail that optimizable parameters are continuous (by the Sobolev embedding) and thus render our specific proof strategy incompatible, as we rely on constructing discontinuous suboptimal parameters by zero extensions
	
	In view of this, in the setting of \eqref{eq: sigma.outside.i} we may also make use of a $\BV$--regularization rather than $H^1$, as while either of these regularizations suffice to guarantee compactness of the flow, which is sufficient to ensure the existence of a minimizer, the $\BV$--regularization allows for discontinuous parameters: one would then rather consider 
	\begin{equation} \label{eq: time.dep.func.bv}
	J_{T}(w,b) := \mathscr{E}(\*x(T))+\frac{1}{N}\int_0^T \|\*x(t)-\overline{\*x}\|^2 \diff t +\lambda  \Big\|[w,b]\Big\|_{\BV(0,T;\R^{d_u})}^2.
	\end{equation}
	\end{remark}
	
	\noindent
	Taking the above remark into consideration, we are able to prove the following partial stability and convergence result in the setting of $\BV$--regularized parameters. We focus on \eqref{eq: sigma.outside.i}, since \eqref{eq: sigma.inside.i} can be covered by means of $L^2$--regularization.
	
	\begin{theorem}[Convergence of averages, stability] \label{thm: turnpike.BV}
	Fix $\lambda>0$, let $P \in \Lip(\R^d;\R^m)$ be any given non-zero surjective map, and let $\overline{\*x} \in \R^{d_x}$ with $\overline{\*x}_i \in P^{-1}(\{\vec{y}_i\})$ for $i\in[N]$ be arbitrary but fixed.
	Suppose that \eqref{eq: sigma.outside.i}, with $\sigma$ $1$--homogeneous, is controllable with linear cost in the sense of \Cref{def: ctrl}. 
	The following facts then hold.
	\begin{enumerate}
	\item (Stability). There exists a constant $\mathfrak{C}=\mathfrak{C}\left(\{\vec{x}_i, \vec{y}_i\}_{i\in[N]}, \left\|\*x^0-\overline{\*x}\right\|, \lambda, N, P\right)>0$ such that for any $T\geqslant 1$, any pair of parameters $\left[w_T, b_T\right] \in \BV(0,T; \R^{d_u})$ minimizing $J_T$ defined in \eqref{eq: time.dep.func.bv}, and the corresponding unique solution $\*x_T(\cdot)$ to \eqref{eq: sigma.outside.i} satisfy
	\begin{equation*} 
	\mathscr{E}(\*x_T(t))+\|\*x_T(t)-\overline{\*x}\|+\Big|\mathrm{D}\left[w_T,b_T\right]\Big|(0,T) \leqslant \mathfrak{C} 
	\end{equation*}
	for all $t \in [0,T]$, where $\left|\mathrm{D}\left[w_T,b_T\right]\right|(0,T)$ denotes the total variation of $[w_T,b_T]$.
	\smallskip
	\item (Convergence of averages). Any pair of parameters $\left[w_T, b_T\right] \in \BV(0,T; \R^{d_u})$ minimizing $J_T$ defined in \eqref{eq: time.dep.func.bv}, and the corresponding unique solution $\*x_T(\cdot)$ to \eqref{eq: sigma.outside.i} also satisfy
	\begin{equation*}
	\frac{1}{T} \int_0^T \mathscr{E}(\*x_T(t))\diff t+ \frac{1}{T}\int_0^T \|\*x(t)-\overline{\*x}\|\diff t + \frac{1}{T} \int_0^T \Big\|[w_T(t),b_T(t)\Big\|\diff t \xrightarrow[T\longrightarrow\infty]{} 0.
	\end{equation*}
	\end{enumerate}

	\end{theorem}
	
	\noindent
	The convergence of averages are very much related to the so-called \emph{integral turnpike} property, which has been studied in some optimal control contexts (see e.g. \citep{trelat2018integral}). Note that, in addition to these convergences, the stability estimates of item \emph{(i)} entail that the oscillations of the training error may be controlled, in any time $t$, uniformly with respect to the time horizon $T$. Moreover, the total variation of the optimal parameters is also uniformly bounded with respect to $T$.
	
	We are unable to provide exponential stability estimates as in \Cref{thm: turnpike.P} due to some constructions specific to the proof of \Cref{thm: turnpike.P}, wherein suboptimal parameters with jumps at time instances are constructed. While the $L^2$--norm does not see these jump singularities, the $\BV$--norm does and renders our strategy incompatible.
	
	\begin{remark}[Extensions] We only stated \Cref{thm: turnpike.P} for neural ODEs of the form \eqref{eq: sigma.inside.i}, or \eqref{eq: sigma.outside.i} with $\sigma$ $1$--homogeneous. This is solely to guarantee the exponential stability estimate of the optimal parameters, for which our proof requires using the underlying scaling endowed by the homogeneity of the dynamics. But in fact, the exponential stability estimate of the training error $\mathscr{E}(\*x_T(T))$ and a uniform-in-$T$ bound of the optimal parameters can be shown for more complicated neural ODE dynamics such as \eqref{eq: compound.neural.ode}, solely by a small adaptation of the proof.
	\end{remark}
		
	\begin{remark}[Dependence on $N$]
	All of the results presented in what precedes hold for a fixed but otherwise arbitrary number of data samples $N$.
	In \Cref{thm: turnpike.P}, one also notes an explicit dependence of the constants $\mathfrak{C}>0$ and $\mu>0$ on $N$ -- in fact, both constants might have a tendency to depend in an exponential manner with respect to $N$ due to the subjacent application of a Gr\"onwall inequality for the stacked neural ODE system, for which the parameters are pasted $N$ times. 
	The dependence on $N$ provided in our proof might not be sufficient or sharp for studying a possible large data limit (e.g., via a law of large numbers argument of some kind) -- we leave this issue open.
	\end{remark}
	
	\begin{example}[A numerical experiment] \label{ex: num.exp.turnpike}
	In \Cref{fig1.turnpike} -- \Cref{fig2.turnpike}, we depict\footnote{
	Software experiments were done using \texttt{PyTorch} \citep{paszke2017automatic} (and may be found at \href{https://github.com/borjanG/dynamical.systems}{\textcolor{dukeblue}{\texttt{https://github.com/borjanG/dynamical.systems}}}), using the Adam optimizer \citep{kingma2014adam} with learning rate equal to $10^{-3}$. Experiments were conducted on a personal MacBook Pro laptop (2.4 GHz Quad-Core Intel Core i5, 16GB RAM, Intel Iris Plus Graphics 1536 MB).} a manifestation of the exponential stability insinuated by \Cref{thm: turnpike.P} on a toy binary classification task (namely $\vec{y}_i \in \{-1,1\}$) with $N=2400$ training samples and $600$ test samples, where $P(\cdot) = \tanh(p_1 \cdot + p_2)$ with $p_1\in\R^{1\times2}$ and $p_2\in\R$ randomly sampled from a uniform distribution on $[0,10]$. Note that, while in theory $\tanh:\R\to[-1,1]$ is not surjective (it is only bijective onto $(-1,1)$), we use it as a thresholding nonlinearity for simplicity, as of course numerically the lack of surjectivity does not appear due to floating point accuracy.
	 
	To discretize the full continuous-time optimization problem, we use direct shooting, which is a \emph{first discretize then optimize} approach. 
	We consider the neural ODE \eqref{eq: sigma.inside.i} with $\sigma(x) = \tanh(x)$ (we use the ResNet \eqref{eq: resnet.discrete}), with $T=15$ (and thus $15$ layers) and $\lambda=0.01$. Finally, we discretize the integrals using an elementary trapezoidal quadrature. 
	We note that the learned flow has a distinctly simple variation in \Cref{fig2.turnpike}, and, albeit on a toy task, we observe satisfactory generalization properties in \Cref{fig3.turnpike}.
	\end{example}
	
	\begin{figure}[h]
\includegraphics[scale=0.55]{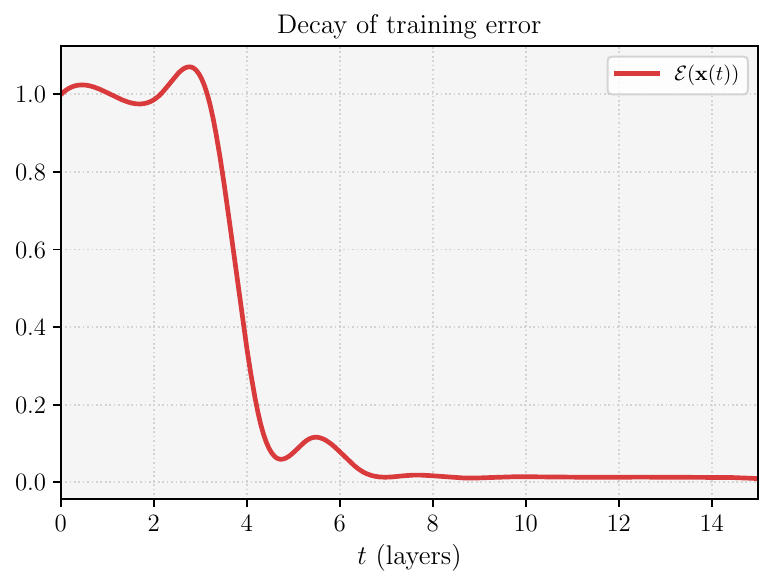}
\includegraphics[scale=0.55]{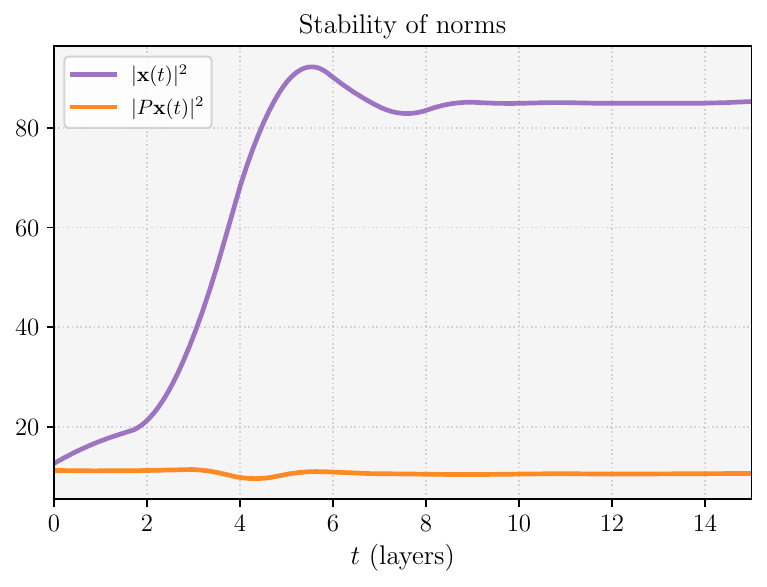}
\caption{\textbf{\Cref{ex: num.exp.turnpike}:} We depict a manifestation of the stability results of \Cref{thm: turnpike.P} for the state $\*x_T(t)$ (\textit{right}) and the training error $\mathscr{E}(\*x_T(t))$ (\textit{left}) over $t \in [0,T]$. We observe that, after a finite time, the training error and trajectory remain at a steady configuration, so further times/layers could be discarded from training.}
\label{fig1.turnpike}
\end{figure}

\begin{figure}
\hspace{0.5cm} {\small $t=0$} \hspace{6cm} {\small $t\leqslant15$}

\includegraphics[scale=0.525]{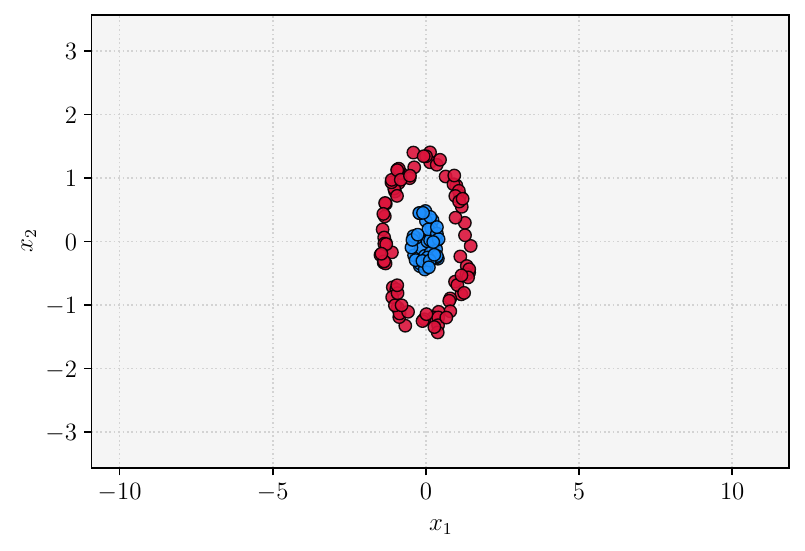}
\includegraphics[scale=0.525]{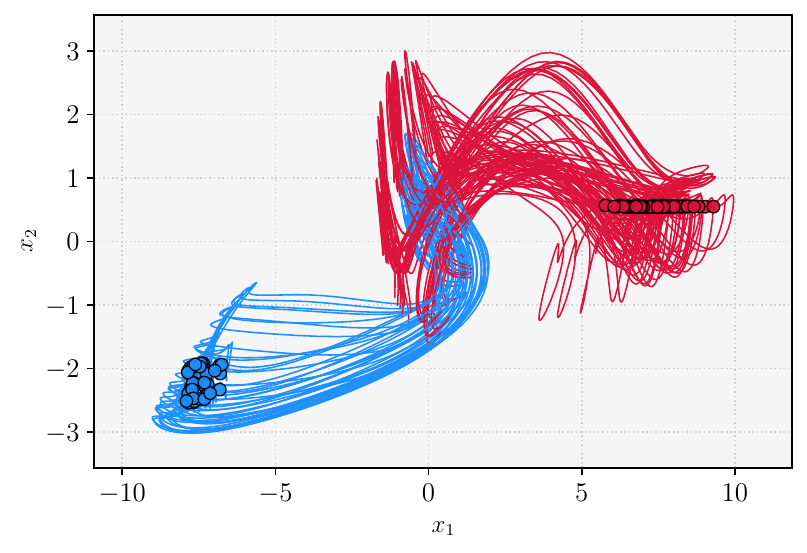}
\caption{\textbf{\Cref{ex: num.exp.turnpike}:} A batch of training data (\textit{left}) and the evolution of the corresponding trained trajectories $\*x_{T,i}(t)$ (\textit{right}) in the phase plane. The learned flow is simple, with moderate variations, due to the exponentially small parameters. We refer to \href{https://github.com/borjanG/dynamical.systems/blob/master/videos/example4-1.mp4}{\textcolor{dukeblue}{\texttt{https://github.com/borjanG/dynamical.systems/blob/master/\\
videos/example4-1.mp4}}} for a movie of the evolution of the trajectories, where the stability phenomenon depicted in \Cref{fig1.turnpike} can be visualized.}
\label{fig2.turnpike}
\end{figure}

\begin{SCfigure}[1][h]
	\caption{\textbf{\Cref{ex: num.exp.turnpike}:} The trained classifier on $[-2,2]^2$ and evaluated on a batch of the test set. The simplicity of the learned flow ensures relatively satisfactory generalization as the shape of the dataset is learned adequately, and the test set is correctly classified.}
	\includegraphics[scale=0.55]{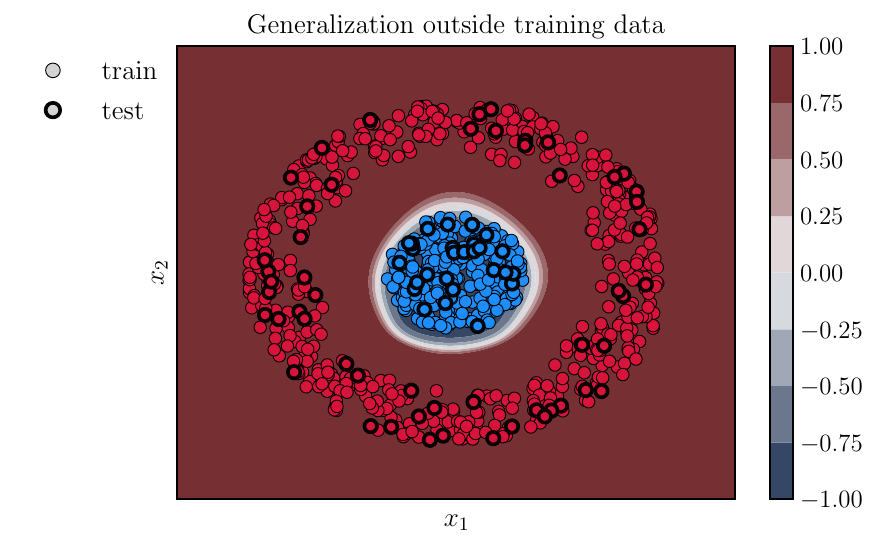}
	  \label{fig3.turnpike}
\end{SCfigure}
			    
	 \subsection{The motivating problem} \label{sec: motivating.problem} 
	Due to the specific nature of the proof of \Cref{thm: turnpike.P} we have restricted our study to a trajectory tracking term consisting of the squared $L^2(0,T; \R^{d_x})$--norm, even-though the final cost $\mathscr{E}(\*x_T(T))$ allows us to address both classification and regression tasks. 
	However, having to look for targets $\overline{\*x}$ in the preimage of the labels $\vec{y}_i$ by $P$ for any general task may not scale computationally and might bias the prediction.  
	\smallskip
	
	\noindent
	Interestingly enough, at least numerically (any analytical result remains an open problem), we observe that the stabilization phenomenon persists (although, in theory, perhaps not with the same rate) when the term $\|\*x(t)-\overline{\*x}\|^2$ is replaced by the training error $\mathscr{E}(\*x(t))$ with a general and possibly non-coercive loss, for instance, the cross-entropy loss on a multi-label classification tasks as seen in \Cref{fig1.ex3} \& \Cref{fig: figure.mnist.turnpike.1}.
        In fact, one could stipulate that this stabilization phenomenon (be it exponential or not, but in any $t\in[0,T]$ rather than just for the output features at time $T$) holds for global minimizers of functionals of the form
        	\begin{equation} \label{eq: time.dep.func.wanted}
	J_{T}(w,b) := \int_0^T \mathscr{E}(\*x(t)) \diff t +\lambda \Big\|[w,b]\Big\|_{\mathscr{H}(0,T;\R^{d_u})}^2,
	\end{equation}
	with $\mathscr{E}$ as in \eqref{eq: phi.def} and $\loss$ being continuous and nonnegative, but otherwise arbitrary; $\mathscr{H}$ is for instance $L^2$ or $\BV$. 
	We perform several numerical experiments to justify\footnote{We do not insinuate that our numerical experiments are comparable with state of the art configurations, as we only look to motivate and illustrate the mathematical phenomena studied in what precedes. Indeed, we generally make use of a forward Euler scheme compared to more advanced adaptive schemes, as used for instance in \citep{chen2018neural}. Our experiments should rather be seen as proof of concept.} this claim (see \Cref{ex: 3} -- \Cref{ex: 6}). 
	
	Note that in these experiments, even-though the loss is taken as cross-entropy and is thus not coercive, in addition to a stability property for $\mathscr{E}(\*x(t))$ to $0$ we also see (e.g. in \Cref{fig1.ex3}) that the trajectories $\*x(t)$ and features $\{P\*x_i(t)\}_{i\in[N]}$ stabilize towards some targets in sufficiently large time. 
	However, due to the fact that $\mathscr{E}$ does not attain its minimizer, it is a priori not clear how one may characterize the targets to which $\*x(t)$ and features $\{P\*x_i(t)\}_{i\in[N]}$ stabilize. We refer to \citep{yague2021sparse} for a result illustrating a similar stability phenomenon in the setting of \eqref{eq: time.dep.func.wanted} with $L^1$--parameter regularization.
	
	\begin{example}[Concentric spheres]
	\label{ex: 1}
	For comparison purposes, we consider an identical dataset setting to that of \Cref{fig1.turnpike} -- \Cref{fig3.turnpike}.
	We consider the neural ODE \eqref{eq: sigma.inside.i} with $\sigma\equiv\tanh$, cross-entropy loss and $\lambda=0.01$, with the output layer having the form $Px = p_1x+p_2$, with $p_1\in\R^{2\times2}, p_2\in\R^2$ both being part of the trainable parameters.
	We visualize the output of the experiments in \Cref{fig1.ex1} -- \Cref{fig3.ex1} below.
	
		\begin{figure}[h]
\includegraphics[scale=0.54]{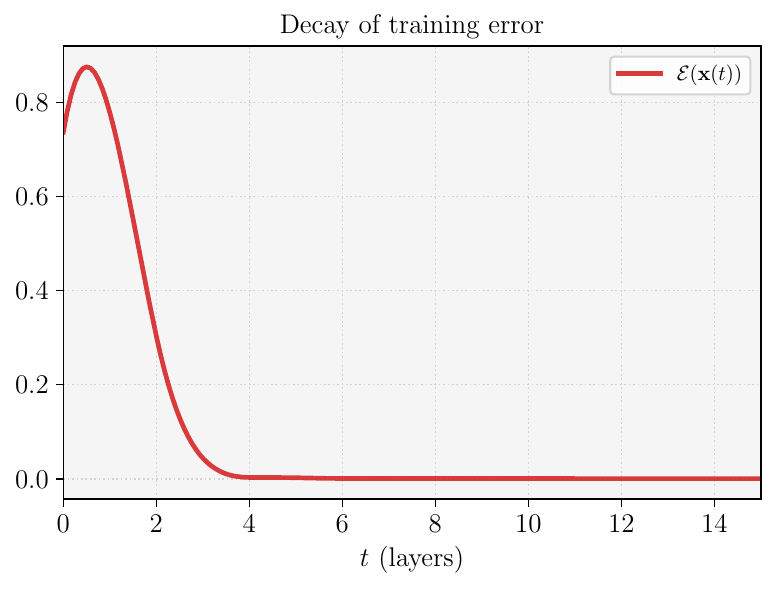}
\includegraphics[scale=0.54]{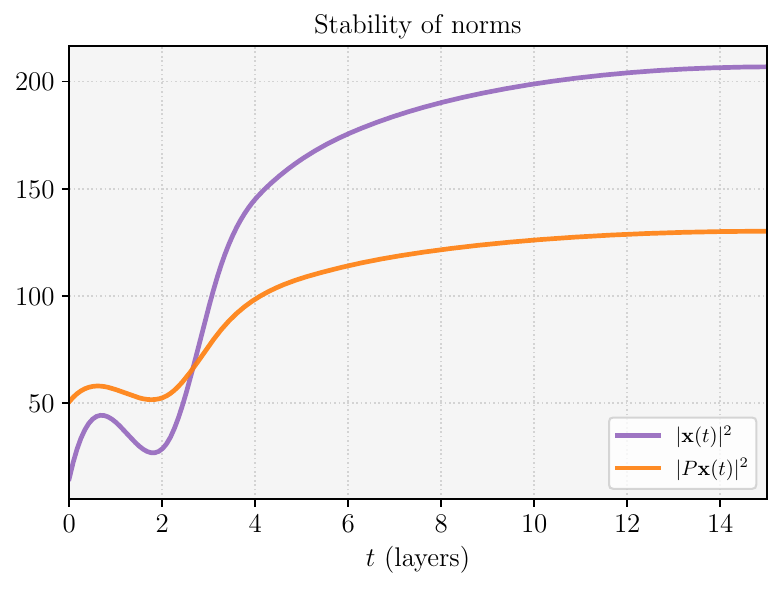}
\caption{\textbf{\Cref{ex: 1}:} The decay of the training error (\textit{left}) and stabilization of the trained trajectories $\*x(t)$ and $\{P\*x_i(t)\}_{i\in[N]}$ (\textit{right}).}
\label{fig1.ex1}
\end{figure}

\begin{figure}
\hspace{0.5cm} {\small $t=0$} \hspace{6cm} {\small $t\leqslant15$}

\includegraphics[scale=0.515]{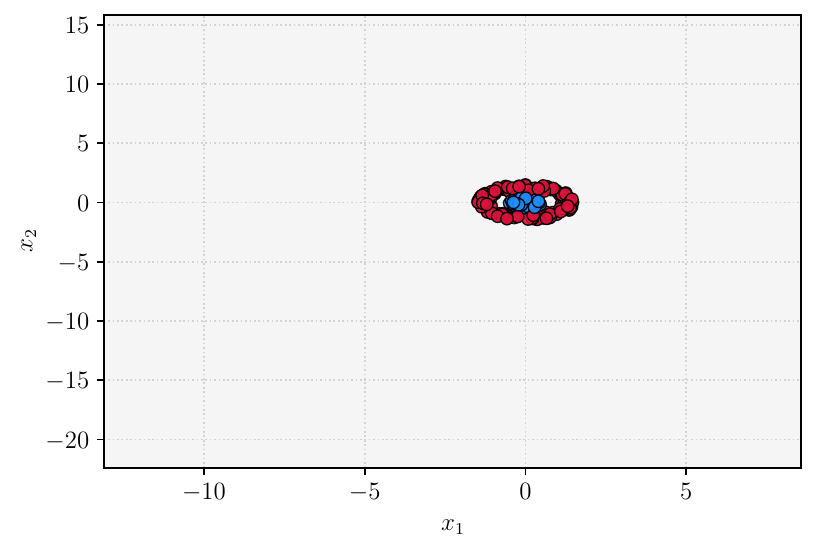}
\includegraphics[scale=0.515]{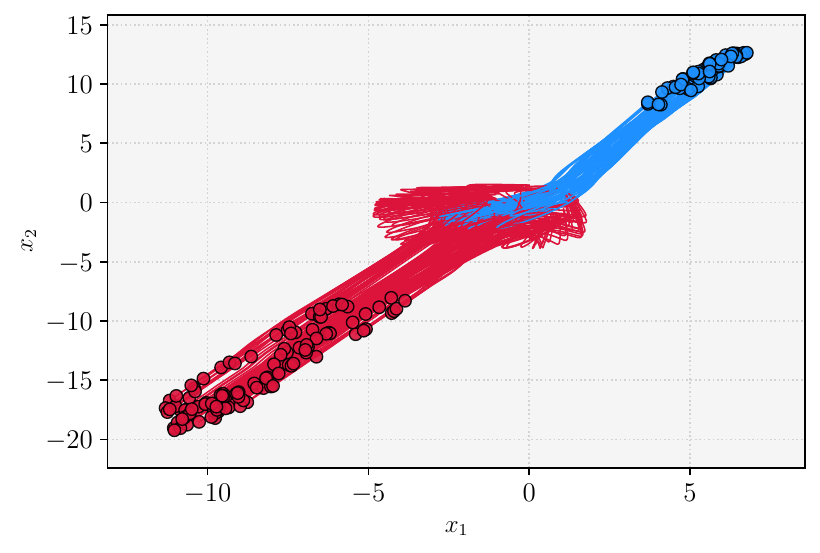}
\caption{\textbf{\Cref{ex: 1}:} A batch of training data (\textit{left}) and the evolution of the corresponding trained trajectories $\*x_{T,i}(t)$ (\textit{right}).
See \href{https://github.com/borjanG/dynamical.systems/blob/master/videos/exampleA-1.mp4}{\textcolor{dukeblue}{\texttt{https://github.com/borjanG/dynamical.systems/blob/master/\\
videos/exampleA-1.mp4}}} for a movie of the evolution of the trajectories, where the stability phenomenon depicted in \Cref{fig1.ex1} can be visualized.}
\label{fig2.ex1}
\end{figure}

\begin{SCfigure}[1][h]
{\caption{\textbf{\Cref{ex: 1}:} The trained classifier on $[-2,2]^2$ and its evaluation on a batch of the test data. The shape of the data is captured accurately.}} {\includegraphics[scale=0.54]{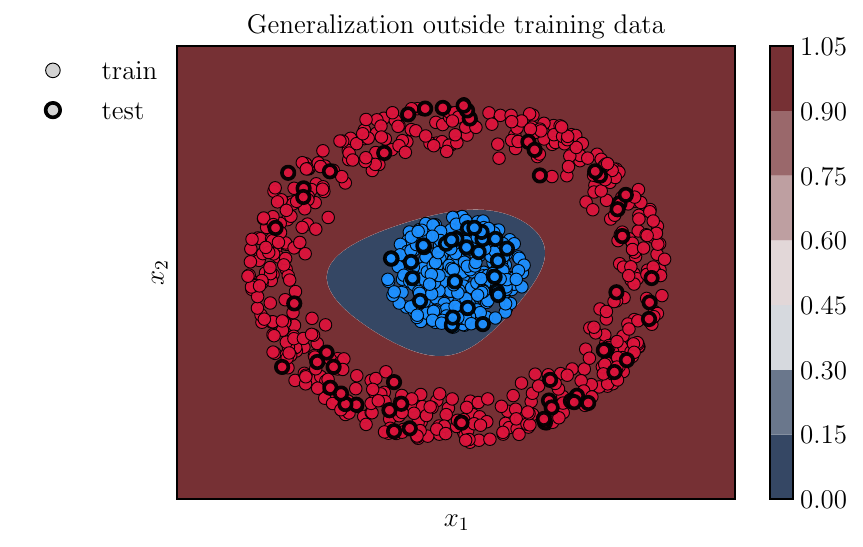}}
\label{fig3.ex1}
\end{SCfigure}

\end{example}

	\begin{example}[Annuli in three colors] \label{ex: 3}
	
	We consider a toy classification task with three labels, namely $\vec{y}_i \in [3]$, with each label corresponding to a different color. The dataset consists of $N=3200$ training samples and $800$ test samples.
	We consider the cross-entropy loss \eqref{eq: cross.entropy.def} in the training error in \eqref{eq: time.dep.func.wanted} and $\lambda=0.01$, and we consider the neural ODE \eqref{eq: sigma.inside.i} with $T=15$ (we use a forward Euler scheme to obtain a corresponding ResNet with fixed time-step equal to $1$), where $\sigma\equiv\tanh$. No augmentation of the initial data is used, and the trajectories evolve in the ambient dimension $d=2$.
	The output layer is parametrized by $Px = p_1x+p_2$, where $p_1\in\R^{3\times2}$, $p_2\in\R^3$ are part of the trainable parameters. 
	We display the results of the experiments in \Cref{fig1.ex3} -- \Cref{fig3.ex3}.
	
	\begin{figure}[h]
	\includegraphics[scale=0.54]{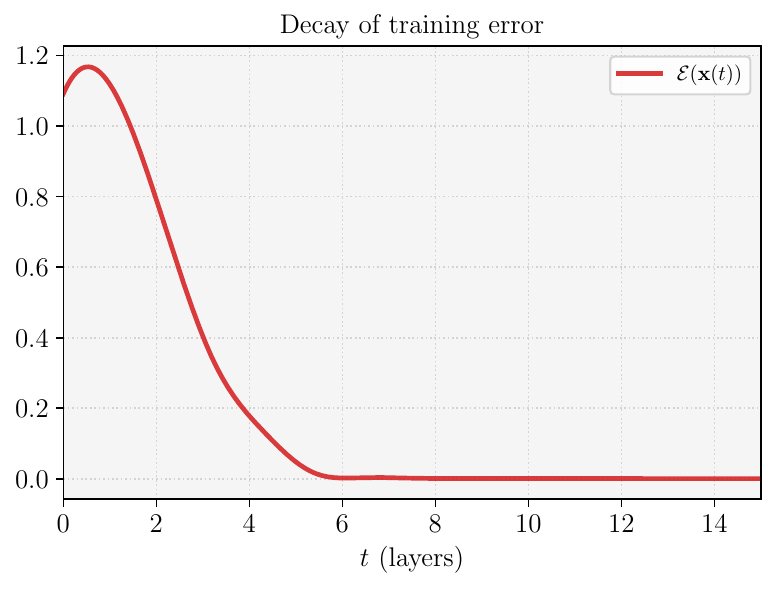}
\includegraphics[scale=0.54]{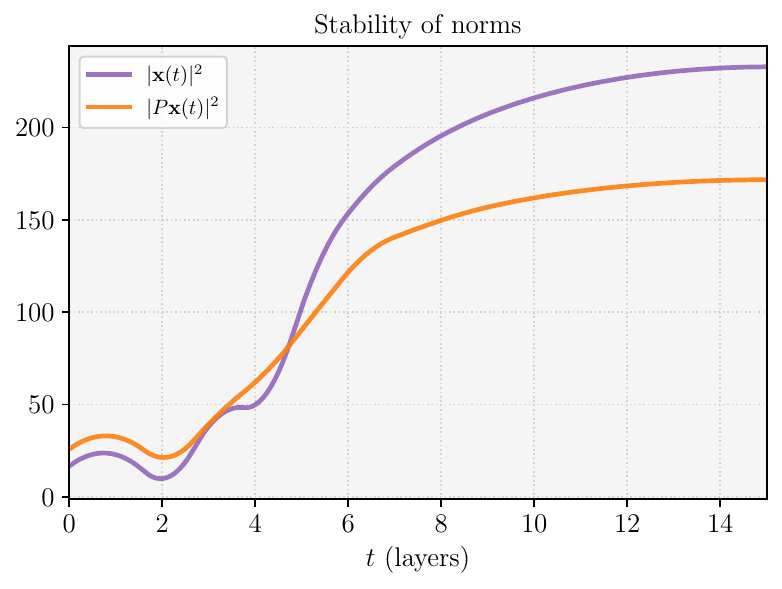}
\caption{\textbf{\Cref{ex: 3}:} The decay of the training error (\textit{left}) and stabilization of the trained trajectories $\*x(t)$ and $\{P\*x_i(t)\}_{i\in[N]}$ (\textit{right}).}
\label{fig1.ex3}
\end{figure}

\begin{figure}
\hspace{0.5cm} {\small $t=0$} \hspace{6cm} {\small $t\leqslant15$}

\includegraphics[scale=0.515]{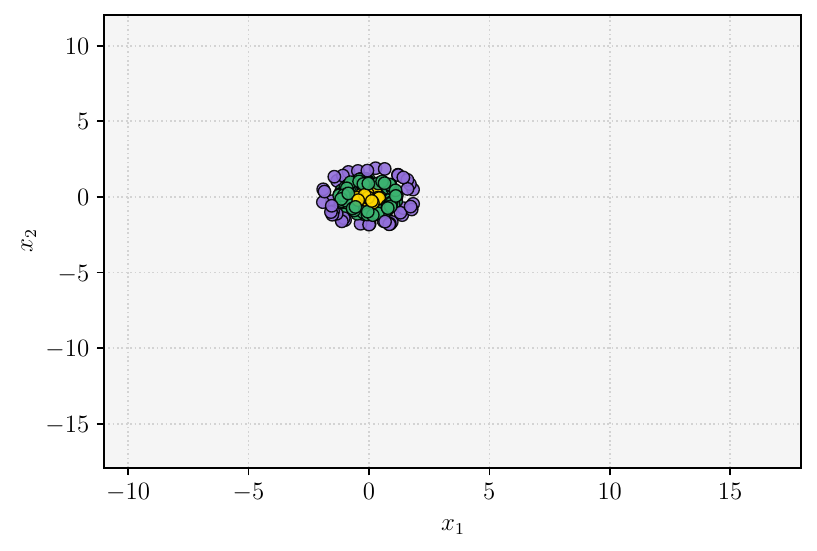}
\includegraphics[scale=0.515]{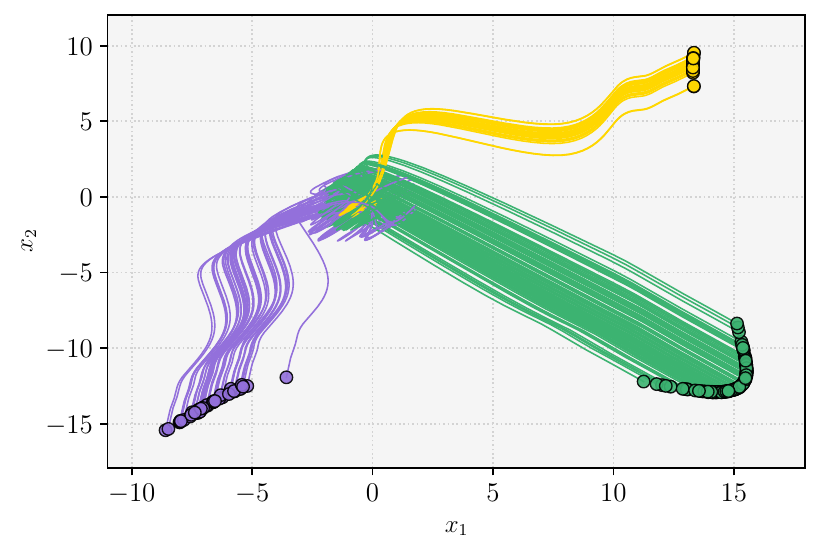}
\caption{\textbf{\Cref{ex: 3}:} A batch of training data (\textit{left}) and the evolution of the corresponding trained trajectories $\*x_{T,i}(t)$ (\textit{right}).
See \href{https://github.com/borjanG/dynamical.systems/blob/master/videos/example4-2.mp4}{\textcolor{dukeblue}{\texttt{https://github.com/borjanG/dynamical.systems/blob/master/
videos/example4-2.mp4}}} for a movie of the evolution of the trajectories, where the stability phenomenon depicted in \Cref{fig1.ex3} can be visualized.}
\label{fig2.ex3}
\end{figure}

\begin{SCfigure}[1][h]
	\caption{\textbf{\Cref{ex: 3}:} The trained classifier on $[-2.5,2.5]^2$ and its evaluation on a batch of the test data. The shape of the dataset is captured accurately.}
{\includegraphics[scale=0.54]{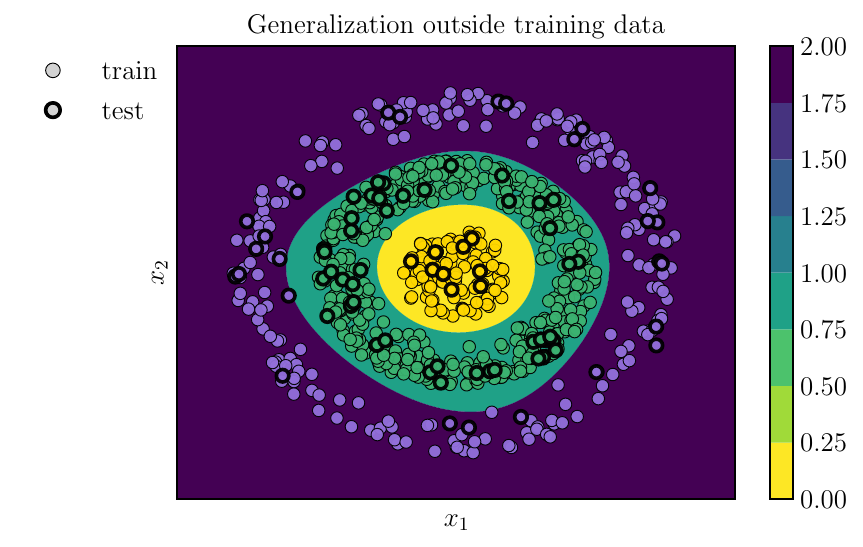}}
\label{fig3.ex3}
\end{SCfigure}

\end{example}

\begin{example}[XOR] \label{ex: 2}
	
	We consider a canonical binary classification task ($\vec{y}_i\in[2]$) -- the XOR dataset (\Cref{fig2.ex2}) consisting of $N=3200$ training samples and $800$ test samples. 
	To further illustrate the genericity of the stability phenomenon, we now consider the neural ODE \eqref{eq: compound.neural.ode}, with $T=15$ (we use a forward Euler scheme to obtain a corresponding ResNet with fixed time-step equal to $1$), where $\sigma\equiv\tanh$ and $d_{\text{hid}}=3$. We again consider cross-entropy loss in the empirical risk $\mathscr{E}$ defined in \eqref{eq: time.dep.func.wanted} and $\lambda=0.01$, and we focus solely on $L^2$--parameter regularization (for simplicity). No augmentation of the initial data is used, and the trajectories evolve in the ambient dimension $d=2$.
	The output layer is defined as $Px = p_1x+p_2$, with $p_1\in\R^{2\times2}$, $p_2\in\R^2$ both being part of the trainable parameters.
	We display the results in \Cref{fig1.ex2} -- \Cref{fig3.ex2}.
	
	\begin{figure}[h]
	\centering
	\includegraphics[scale=0.54]{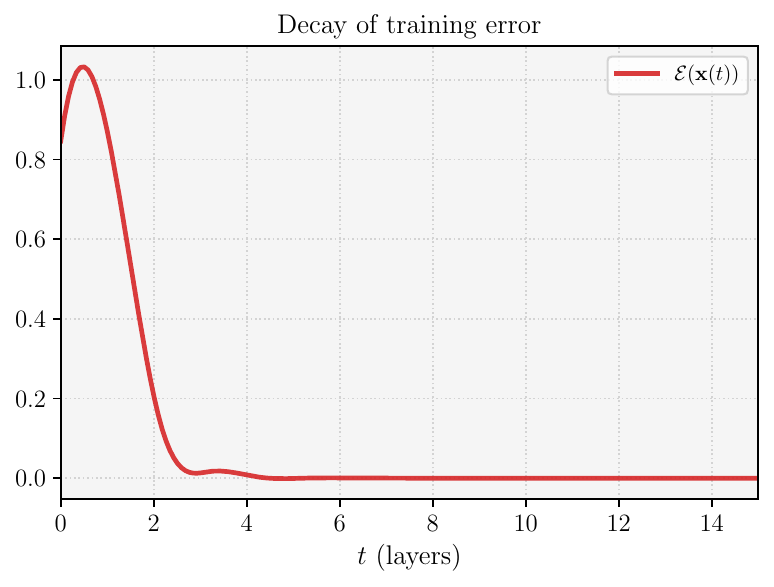}
\includegraphics[scale=0.54]{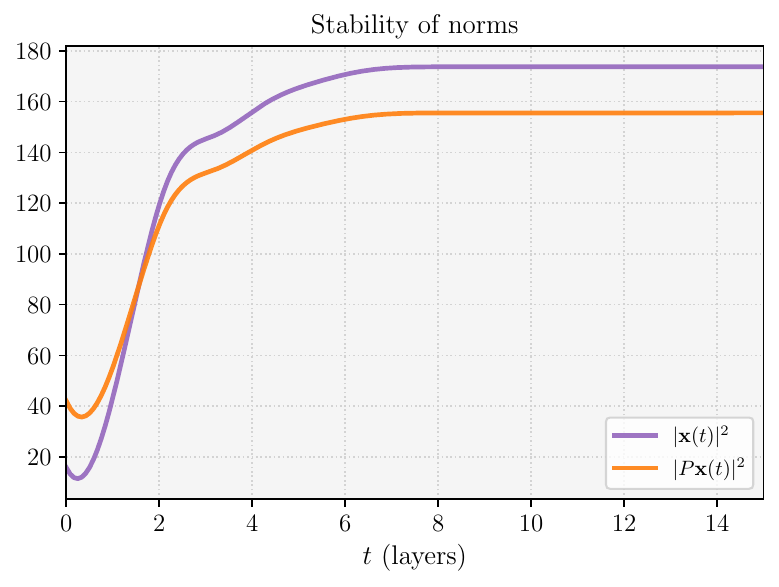}
\caption{\textbf{\Cref{ex: 2}:} The decay of the training error (\textit{left}) and stabilization of the trained trajectories $\*x(t)$ and $\{P\*x_i(t)\}_{i\in[N]}$ (\textit{right}).}
\label{fig1.ex2}
\end{figure}

\begin{figure}
\hspace{0.5cm} {\small $t=0$} \hspace{6cm} {\small $t\leqslant15$}

\includegraphics[scale=0.515]{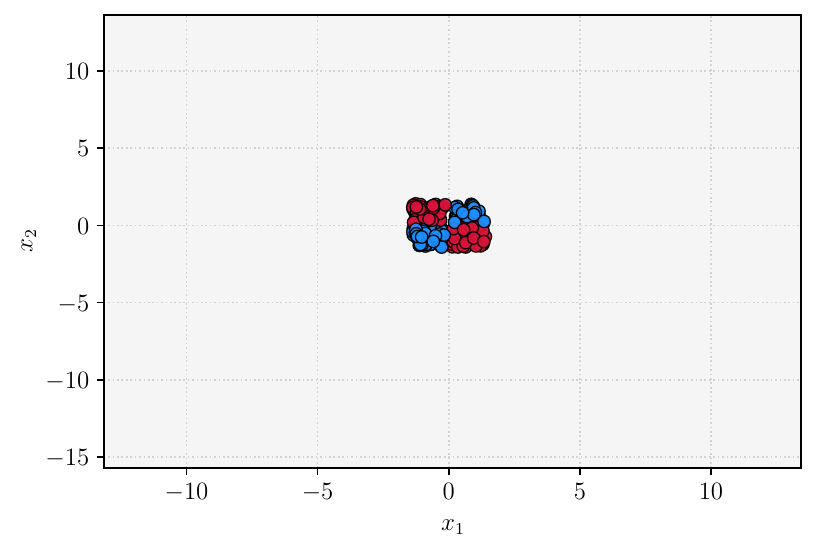}
\includegraphics[scale=0.515]{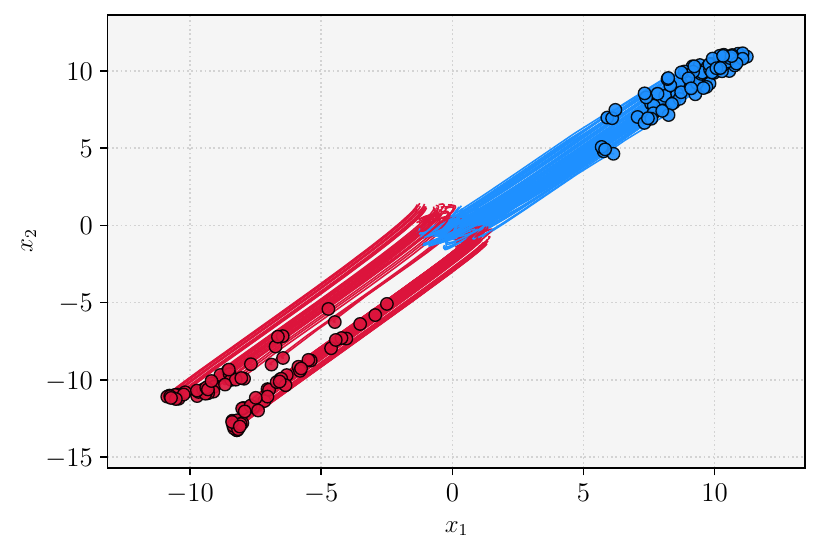}
\caption{\textbf{\Cref{ex: 2}:} A batch of training data (\textit{left}) and the evolution of the corresponding trained trajectories $\*x_{T,i}(t)$ (\textit{right}).
See \href{https://github.com/borjanG/dynamical.systems/blob/master/videos/example4-3.mp4}{\textcolor{dukeblue}{\texttt{https://github.com/borjanG/dynamical.systems/blob/master/\\
videos/example4-3.mp4}}} for a movie of the evolution of the trajectories, where the stability phenomenon depicted in \Cref{fig1.ex2} can be visualized.}
\label{fig2.ex2}
\end{figure}

\begin{SCfigure}[1][h]
\caption{\textbf{\Cref{ex: 2}:} The trained classifier on $[-2.5,2.5]^2$ and its evaluation on a batch of the test data. The shape of the dataset is captured accurately.}
\includegraphics[scale=0.54]{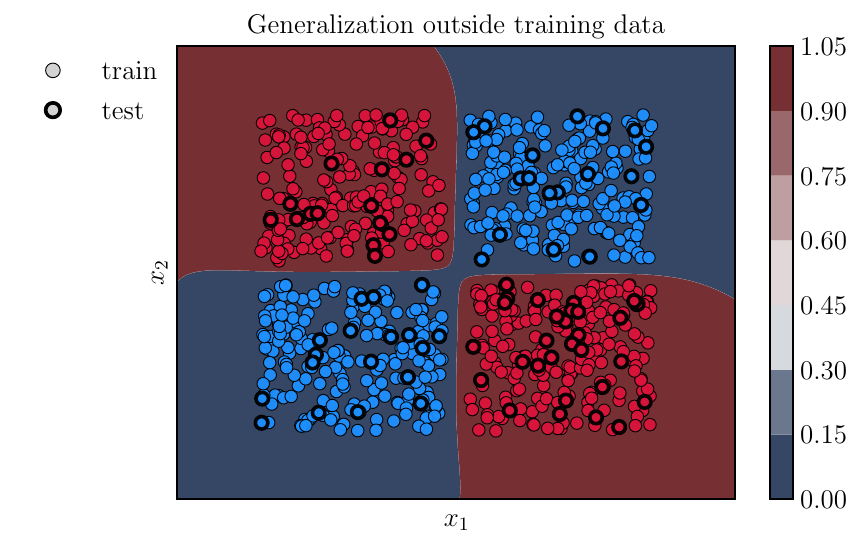}
\label{fig3.ex2}
\end{SCfigure}

\end{example}
	
	\begin{example}[MNIST] \label{ex: 4}
	
	We now show that the stabilization phenomenon may also be observed on more realistic datasets such as MNIST \citep{lecun2010mnist}. 
	MNIST is a dataset consisting of handwritten digits ranging from $0$ to $9$, with a training set consisting of 60000 samples, and a test set consisting of 10000 samples. 
	Each input sample $\vec{x}_i$ is a grayscale, $28\times 28$ image of a handwritten digit, and thus an element of $\R^{784}$; the dataset has $10$ labels: $\vec{y}_i \in [10]$. 
	We consider a similar setup as in \Cref{ex: 2} -- the model is parametrized as \eqref{eq: standard.dyn.sys} -- \eqref{eq: outside.inside} (we use a forward Euler scheme to obtain a corresponding ResNet with fixed time-step) where $d_\hid = 32$ and $\sigma\equiv\tanh$, we consider cross-entropy loss in the training error in \eqref{eq: time.dep.func.wanted} and only $L^2$--regularization of the parameters, with $T=20$. 
	We emphasize that we do not use any convolutional layers nor other commonly used operations in image classification (e.g. batch normalization, max-pooling) in the underlying ResNet architecture, and we solely concentrate on basic matrix weights.
	The output layer is parametrized by $Px = p_1x+p_2$, where $p_1\in\R^{10\times784}$, $p_2\in\R^{10}$ are part of the trainable parameters.
	We show the results of the experiments in \Cref{fig: figure.mnist.turnpike.1} -- \Cref{fig2.ex4}.
	
	\begin{figure}[h!] 
	\centering
	\includegraphics[scale=0.525]{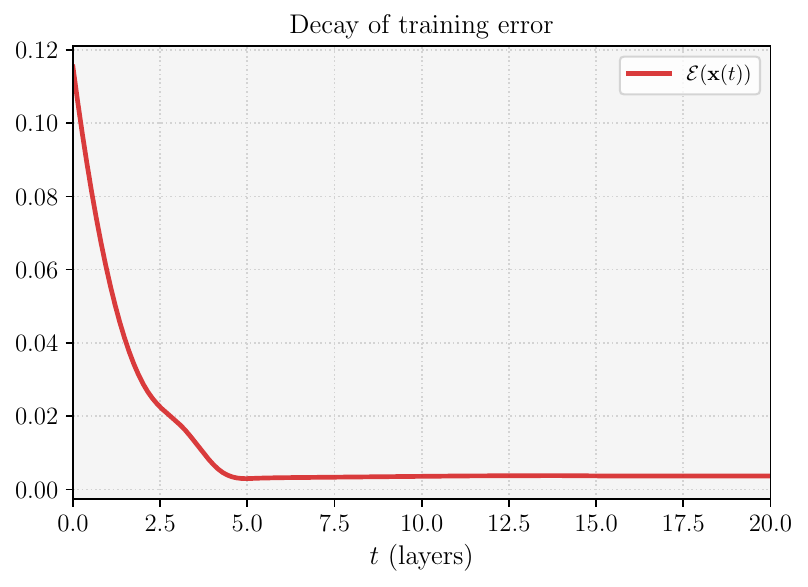}
	\includegraphics[scale=0.525]{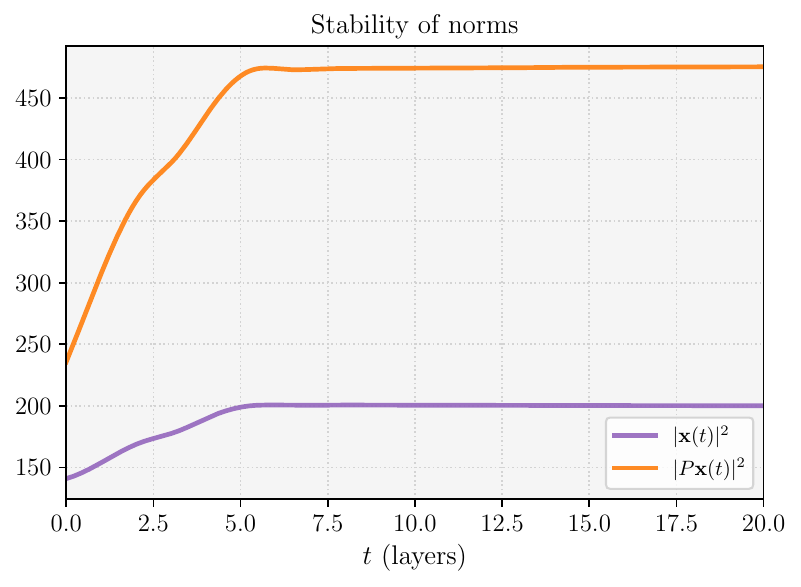}
	\caption{\textbf{\Cref{ex: 4}:} The decay of the training error (\textit{left}) and stabilization of the trained trajectories $\*x(t)$ and $\{P\*x_i(t)\}_{i\in[N]}$ (\textit{right}).}
	\label{fig: figure.mnist.turnpike.1}
	\end{figure}
	
	\begin{figure}
	\includegraphics[scale=1]{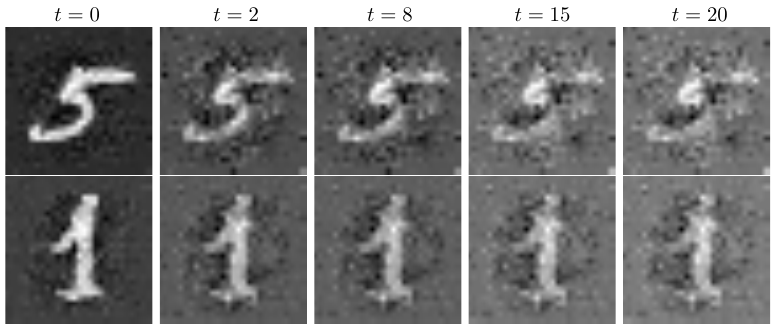}
%
%
	\caption{\textbf{\Cref{ex: 4}:} We depict the evolution of two individual samples $\*x_i(t)\in\R^{784}$ mapped onto a $28\times28$ grid.  We see that each trajectory stabilizes to some stationary configuration. The trained model tends to "diffuse" (in a caloric sense) the input signal ahead of classifying via the softmax applied to $P\*x_i(t) \in \R^{10}$.}
	\label{fig2.ex4}
	\end{figure}
	
	\begin{figure}
	\centering
	\includegraphics[scale=0.5]{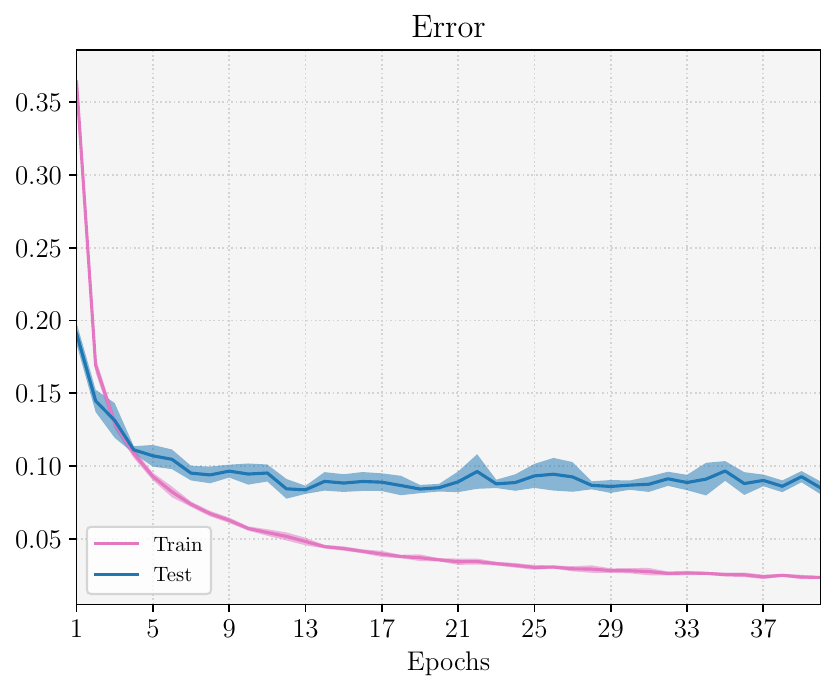}
	\includegraphics[scale=0.5]{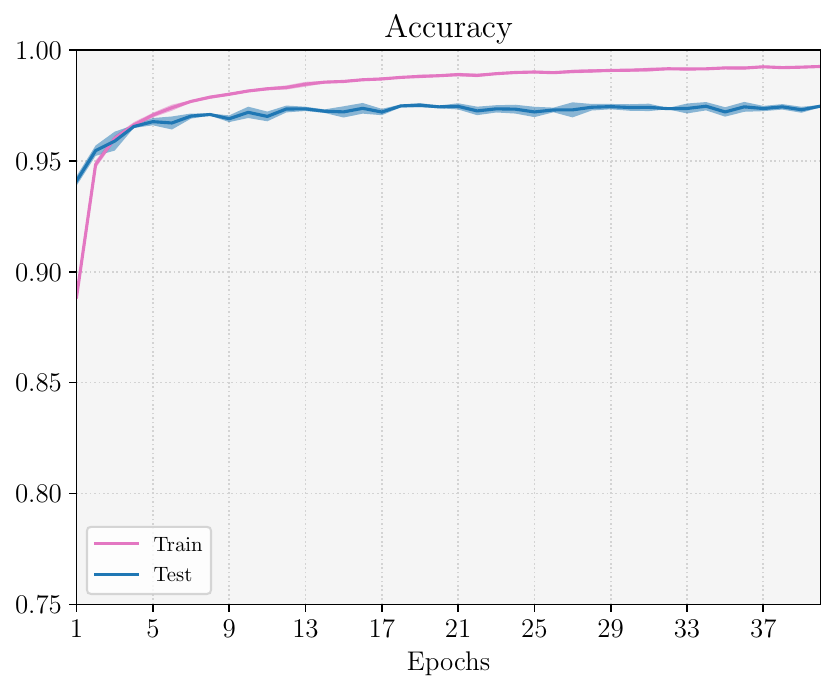}
	\caption{\textbf{\Cref{ex: 4}:} The validation error and accuracy over training epochs (experiments repeated $10$ times); in this simplified dataset setting, generalization is not necessarily compromised due to the introduction of an integrated empirical risk.}
	\end{figure}
	
	\end{example}
		
	\begin{example}[Fashion MNIST] \label{ex: 6}
	
	Fashion-MNIST is intended to serve as a direct drop-in replacement for the original MNIST dataset for benchmarking machine learning algorithms. It shares the same image size and structure of training and testing splits.
	We consider the same setup as in \Cref{ex: 4}, and we show the results of the experiments in \Cref{fig: figure.fashion.mnist.turnpike.1} -- \Cref{fig3.ex6}.
	
	\begin{figure}[h!]
	\centering
	\includegraphics[scale=0.525]{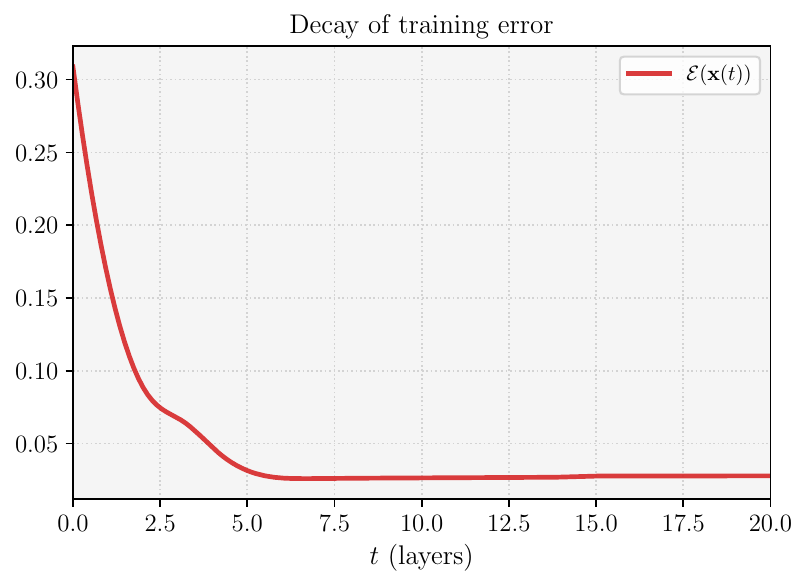}
	\includegraphics[scale=0.525]{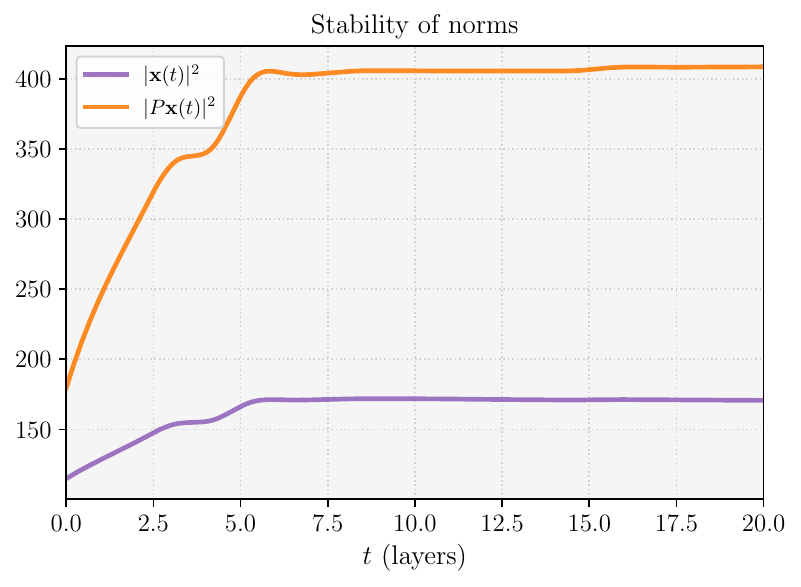}
	\caption{\textbf{\Cref{ex: 6}:} The decay of the training error (\textit{left}) and stabilization of the trained trajectories $\*x(t)$ and $\{P\*x_i(t)\}_{i\in[N]}$ (\textit{right}).}
	\label{fig: figure.fashion.mnist.turnpike.1}
	\end{figure}

	\begin{figure}
	\includegraphics[scale=1]{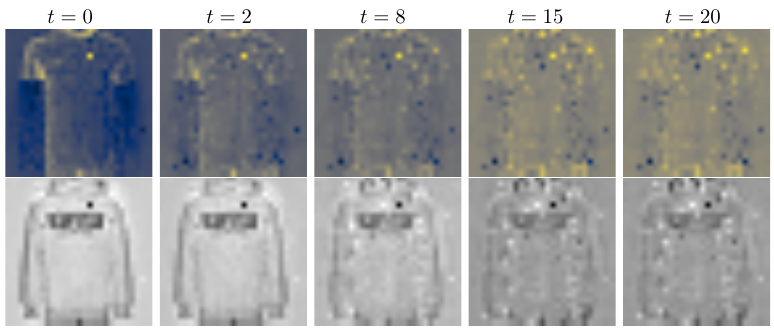}
%
%
	\caption{\textbf{\Cref{ex: 6}:} We depict the evolution of two individual samples $\*x_i(t)\in\R^{784}$ mapped onto a $28\times28$ grid (both sets of images are grayscale, but a different colormap is used to enhance visibility).}
	\label{fig2.ex6}
	\end{figure}
	
	\begin{figure}
	\centering
	\includegraphics[scale=0.5]{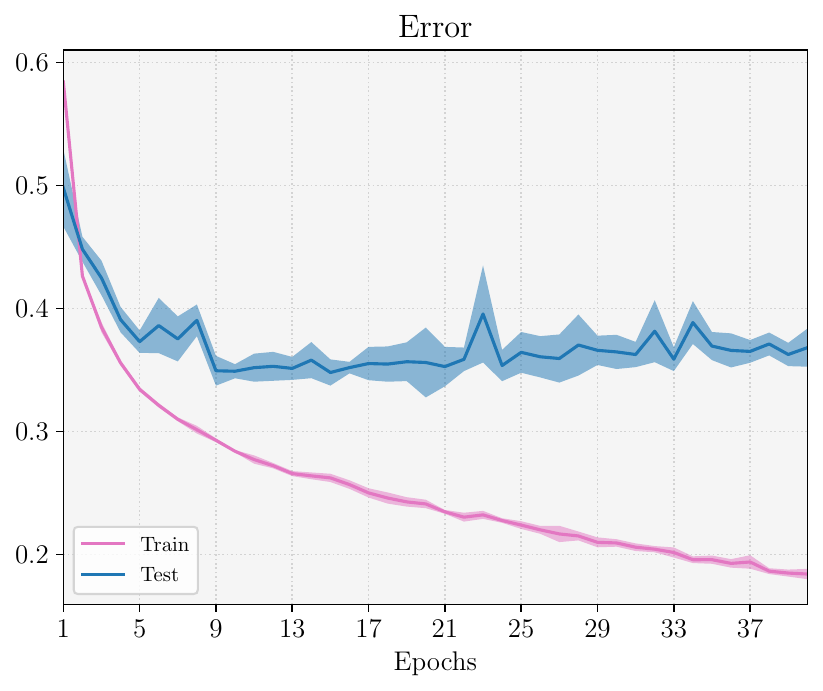}
	\includegraphics[scale=0.5]{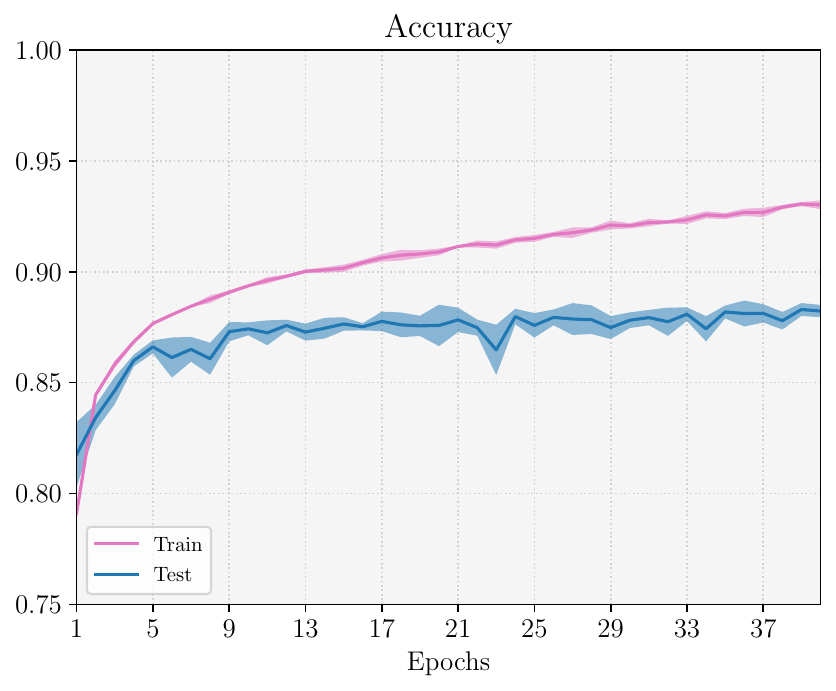}
	\caption{\textbf{\Cref{ex: 6}:} The validation error and accuracy over training epochs (experiments repeated $10$ times); as anticipated, generalization is not as good as for the simpler MNIST dataset. The lower accuracy with respect to state of the art configurations could also be due to the fact that we do not make use of convolutional layers.}
	\label{fig3.ex6}
	\end{figure}
	
	\end{example}

	\section{The interpolation regime}
	
	\noindent
	The majority of our results stated in the preceding sections stipulate whether and how the output $\*x(T)$ of the neural ODE trajectory approaches the so-called interpolation regime ($\mathscr{E}(\*x(T))=0$ with $\mathscr{E}$ given in \eqref{eq: phi.def}) when $T$ increases. 
	 It is thus of interest to also illuminate some of the properties of the parameters which allow the trajectory to reach a minimizer of the empirical risk $\mathscr{E}$, and to see whether such parameters indeed exist. 
	
	By means of an elementary Gr\"onwall argument, we can show the following illustrative result, which stipulates a lower bound for the amplitude of the weights $w$ in terms of the dispersion or concentration of the input data.
	
	\begin{proposition} \label{prop_lower_bound}
	Let $P:\R^d\to\R^m$ be surjective, and let $T>0$. 
    	Assume that for some parameters $[w,b]\in L^1(0,T;\R^{d_u})$ the solution $\*x\in C^0([0,T];\R^{d_x})$ to either \eqref{eq: sigma.inside.i} or \eqref{eq: sigma.outside.i} satisfies
    	\begin{equation} \label{eq: regression.interpolation}
    	P\*x_i(T) = \vec{y}_i \hspace{1cm} \text{ for all } i \in [N].
    	\end{equation}
    	Then
	    \begin{equation}\label{controllability_L1_lower_bound}
        \|w\|_{L^1(0, T; \R^{d_u})} \geqslant \mathfrak{C}(\sigma)\max_{\substack{(i,j)\in [N]^2\\  i\neq j }}\inf_{\substack{\*x_{i}^1\in P^{-1}\left(\left\{\vec{y}_i\right\}\right)\\  \*x_{j}^1\in P^{-1}\left(\{\vec{y}_j\}\right)}}\log\left( \frac{\left\|\*x_{i}^1-\*x_{j}^1\right\|}{\left\|\*x_{i}^0-\*x_{j}^0\right\|}\right),
    \end{equation}
    where $\mathfrak{C}(\sigma)>0$ is the Lipschitz constant of $\sigma\in\Lip(\R)$.
	\end{proposition}
	
	\noindent
	By virtue of Cauchy-Schwarz, \eqref{controllability_L1_lower_bound} clearly implies
	\begin{equation*}
    \|w\|_{L^2(0, T; \R^{d_u})} \geqslant  \frac{\mathfrak{C}(\sigma)}{\sqrt{T}}\max_{\substack{(i,j)\in[N]^2\\  i\neq j }}\inf_{\substack{\*x_{i}^1\in P^{-1}\left(\{\vec{y}_i\}\right)\\  \*x_{j}^1\in P^{-1}\left(\{\vec{y}_j\}\right)}}\log\left( \frac{\left\|\*x_{i}^1-\*x_{j}^1\right\|}{\left\|\*x_{i}^0-\*x_{j}^0\right\|}\right).
    \end{equation*}
    This observation leads us to stipulate the interest of $L^2$--regularization and increasing $T$: should the training data contains inputs which are very concentrated in the ambient space, parameters in the interpolation regime (\eqref{eq: regression.interpolation} is equivalent to $\mathscr{E}(\*x(T))=0$ whenever $\mathscr{E}$ attains its minimum) will dissipate in $L^2$--norm over longer time horizons.
    \smallskip
 
   \noindent On another hand, the difference quotient
	\begin{equation*}\label{overfitting_indicator}
        \kappa\left(\*x_i^0,\*x_i^1\right)\coloneqq \frac{\left\|\*x_{i}^1-\*x_{j}^1\right\|}{\left\|\*x_{i}^0-\*x_{j}^0\right\|}
    \end{equation*}
	can roughly be seen as an indicator of the variations of the flow, and thus the weight matrix would control the latter's simplicity. 
	Indeed, denoting by $\Phi_T:\R^d\to\R^d$ the map defined by $\Phi_T(\*x_i^0) = \*x_i(T)$, where $\*x_i(t)$ solves either \eqref{eq: sigma.inside.i} or \eqref{eq: sigma.outside.i}, one can roughly stipulate that $\kappa$ is an approximation $\nabla \Phi_T$ (evaluated at some intermediate point -- here $\nabla \Phi_T$ denotes the Jacobian matrix of the flow map $\Phi_T$). This can be argued already by using, for instance, the Cauchy mean-value theorem. 
	Let us provide further detail and focus on \eqref{eq: sigma.inside.i} for simplicity, and assume $\sigma\in C^1(\R)\cap\Lip(\R)$. We may linearize \eqref{eq: sigma.inside.i} with respect to the initial datum $\*x^0_i$ to obtain 
	\begin{equation*}
	\begin{dcases}
	\dot{\*z}_i(t) = w(t)\begin{bmatrix}\sigma'\left(\*x^0_{i,1}\right)\*z_{i,1}(t)\\\vdots\\\sigma'\left(\*x^0_{i,d}\right)\*z_{i,d}(t)\end{bmatrix} := \widehat{w}(t)\*z_i(t) &\text{ for } t\in(0,T)\\
	\*z_i(0) = \*z^0_i
	\end{dcases}
	\end{equation*}
	for $i\in[N]$. One then sees that
	\begin{equation*}
	\exp\left(\int_0^T \widehat{w}(t)\diff t\right) = \nabla\Phi_T(\*x^0_i)
	\end{equation*}
	for $i\in[N]$. One may thus see $\int_0^T \widehat{w}(t)\diff t$ as a "multi-dimensional" logarithm of $\nabla\Phi_T(\*x^0_i)$. Thus, the Jacobian of the flow, which is an indicator of its variations and thus of its simplicity, can be measured by the weight matrix.
	
   \subsection{On \Cref{def: ctrl}}

	\noindent
	To complete this section, we state the following interpolation result, which includes an estimate on the parameters with respect to the distance of the target and the initial datum, which somewhat enhances the validity of the assumption we make in \Cref{thm: no.running}.
	While such an estimate is standard in the setting of linear models, it is not provided by sufficient controllability conditions for nonlinear systems such as the Chow-Rashevski theorem \cite[Chapter 3, Section 3.3]{coron2007control}.

	 \begin{theorem}\label{prop_Nleqd+1}
	    	Let $T>0$ and assume that $N\leqslant d$. Let $\*x^1\in \R^{d_x}$ be given, and assume that the activation function $\sigma \in C^1(\R) \cap \Lip(\R)$ is such that
	    \begin{equation*} \label{svectors_system}
	        \Big\{\sigma\left(\*x_{1}^1\right),\dots,\sigma\left(\*x_{i}^1\right),\dots,\sigma\left(\*x_{N}^1\right)\Big\}
	    \end{equation*}
	    is a system of linearly independent vectors in $\R^{d}$. 
	    Then, there exist universal constants $r>0$ and $\mathfrak{C}>0$ such that for any datum $\*x^0\in \R^{d_x}$ satisfying  $\left\|\*x^0-\*x^1\right\|\leqslant r$, there exists a weight matrix $w\in L^{\infty}(0,T;\R^{d\times d})$ such that the unique solution $\*x(\cdot)$ to 
	\begin{equation*} \label{eq: sigma.inside.i_Nleqd+1}
    	\begin{dcases}
    	\dot{\*x}(t) = \*w(t) \sigma(\*x(t)) &\text{ in } (0,T) \\
    	\*x(0) = \*x^0,  
    	\end{dcases}
    	\end{equation*}
	satisfies
	\begin{equation*}
	\*x(T) = \*x^1,
	\end{equation*}
    	and the following estimate holds
    	\begin{equation*} \label{prop_Nleqd+1_estimate}
    	    \left\|w\right\|_{L^{\infty}(0,T;\R^{d\times d})}\leqslant \frac{\mathfrak{C}}{T}\left\|\*x^0-\*x^1\right\|.
    	\end{equation*}
	\end{theorem}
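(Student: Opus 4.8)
The plan is to produce the control explicitly, by first prescribing the trajectory and then solving, pointwise in time, a linear algebraic system for $w(t)$. Because of the block-diagonal structure \eqref{eq: form.controls}, the stacked dynamics decouple into $N$ copies $\dot{\*x}_i(t) = w(t)\sigma(\*x_i(t))$ that share the \emph{same} $w(t)\in\R^{d\times d}$, so the task is to find a single $w(\cdot)$ simultaneously steering every $\*x_i$ from $\*x_i^0$ to $\*x_i^1$; note $N\leq d$ is exactly what makes this overdetermined-looking problem solvable together with the independence hypothesis \eqref{svectors_system}. First I would fix the desired trajectory to be the affine interpolation in each copy, $\*x_i(t) := \*x_i^0 + \frac{t}{T}\big(\*x_i^1-\*x_i^0\big)$ for $t\in[0,T]$, so that $\*x_i(0)=\*x_i^0$, $\*x_i(T)=\*x_i^1$ and $\dot{\*x}_i(t)\equiv \frac1T(\*x_i^1-\*x_i^0)$. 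Writing $S(t):=\big[\,\sigma(\*x_1(t))\,\big|\,\cdots\,\big|\,\sigma(\*x_N(t))\,\big]\in\R^{d\times N}$ and $V:=\frac1T\big[\,\*x_1^1-\*x_1^0\,\big|\,\cdots\,\big|\,\*x_N^1-\*x_N^0\,\big]\in\R^{d\times N}$, I would then set $w(t):=V\big(S(t)^\tr S(t)\big)^{-1}S(t)^\tr$. Whenever $S(t)^\tr S(t)$ is invertible one has $w(t)S(t)=V$, i.e. $w(t)\sigma(\*x_i(t))=\frac1T(\*x_i^1-\*x_i^0)=\dot{\*x}_i(t)$ for every $i$; hence the prescribed $\*x$ solves the ODE with $u:=[w,0]^\tr$, and since the right-hand side $\*x\mapsto \*w(t)\sigma(\*x)$ is Lipschitz (with $w\in C^0([0,T])\subset L^\infty$ because $\sigma\in C^1$ and $t\mapsto S(t)$ is continuous), it is \emph{the} solution, which therefore reaches $\*x^1$ at time $T$.

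It remains to (i) justify that $S(t)^\tr S(t)$ is invertible with $\big\|(S(t)^\tr S(t))^{-1}\big\|\leq K$ uniformly in $t\in[0,T]$ and over the ball $\{\|\*x^0-\*x^1\|\leq r\}$ for a suitable small $r$, and (ii) read off the estimate. For (i), along the affine interpolation $\|\*x_i(t)-\*x_i^1\|\leq\|\*x_i^0-\*x_i^1\|\leq\|\*x^0-\*x^1\|\leq r$, so the $\*x_i(t)$ stay in a fixed compact neighbourhood of $\{\*x_1^1,\dots,\*x_N^1\}$ and, $\sigma$ being continuous, $S(t)$ stays uniformly (in $t$) arbitrarily close to $S^1:=[\sigma(\*x_1^1)\,|\,\cdots\,|\,\sigma(\*x_N^1)]$ as $r\downarrow 0$. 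By hypothesis the columns of $S^1$ are linearly independent, so $(S^1)^\tr S^1$ is invertible; invertibility being an open condition, one can pick $r>0$ — depending only on $\*x^1$ and $\sigma$, and crucially \emph{not} on $T$ — so that $(S(t)^\tr S(t))^{-1}$ exists with norm at most $K:=2\|((S^1)^\tr S^1)^{-1}\|$ for all $t\in[0,T]$. For (ii), $\|w(t)\|\leq \|V\|\,\|(S(t)^\tr S(t))^{-1}\|\,\|S(t)\|\leq K'\|V\|$ with $K'$ depending only on $K$ and $\sup_{t}\|S(t)\|$ (itself bounded in terms of $r$ and $\|S^1\|$), while $\|V\|=\frac1T\|\*x^1-\*x^0\|$ since the copy-wise Euclidean norms pack into the $\R^{d_x}$-norm; hence $\|u\|_{L^\infty(0,T;\R^{d_u})}=\|w\|_{L^\infty(0,T;\R^{d\times d})}\leq \frac{C}{T}\|\*x^0-\*x^1\|$ with $C$ independent of $T$.

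The main — indeed essentially the only delicate — point is the uniform invertibility in step (i): one must fix $r$ \emph{before} letting the horizon vary, exploiting that the image of the affine interpolation, hence the range of $t\mapsto S(t)$, depends on $\*x^0$ and $\*x^1$ only and not on $T$. This is precisely what forces $r$, $K$ and $C$ to come out independent of the time horizon, as claimed. Everything else is the routine pseudoinverse computation together with a direct (Gr\"onwall-free) verification that the constructed pair $(\*x,u)$ solves the control system and meets the terminal condition.
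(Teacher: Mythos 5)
Your proof is correct and follows essentially the same route as the paper's: both prescribe the affine interpolation $\gamma(t/T)$ as the trajectory and solve the pointwise linear system $w(t)\sigma(\*x_i(t))=\dot{\*x}_i(t)$, with the uniform-in-$t$ (and $T$-independent) invertibility secured by the linear independence of $\{\sigma(\*x_i^1)\}$ plus continuity for small $r$. The only difference is presentational: where you write the explicit pseudoinverse $w(t)=V\left(S(t)^\tr S(t)\right)^{-1}S(t)^\tr$, the paper invokes the abstract minimal-norm right inverse $\Gamma_s$ of the surjective map $w\mapsto w\,\sigma(\gamma(s))$ via the open mapping theorem — these are the same operator, so the arguments coincide.
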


	\begin{remark}
	
        \begin{itemize} 
            \item For simplicity of presentation, we have not exhibited the bias parameter $b(t)$.  One can readily check that, in the presence of this additional parameter, the assumption $N\leqslant d$ can be relaxed to $N\leqslant d+1$.
            \smallskip
            	    
            \item The case $N>d+1$ may be treated by, for instance, appending additional features (e.g. zeros as in \citep{dupont2019augmented}) to the input data $\*x^0_i$ for $i\in[N]$ as to guarantee that the augmented datum is of dimension $d_\aug\geqslant N$.
        \end{itemize}
        \end{remark}
	
	\noindent
	In the discrete-time context of neural networks such as \eqref{eq: mlp} or \eqref{eq: resnet.discrete}, the property analog to \Cref{def: P-interpolation} is also well explored in the literature, and is commonly called \emph{finite sample expressivity} (\citep{zhang2016understanding}). 
	An additional interest is that of estimating the number of parameters -- referred to as \emph{the memorization capacity} -- needed to manifest this property. 
	  For further results in this direction we refer the reader to \citep{sra2019, bubeck2020network, bubeck2020law} and the references therein.

	 In the ODE context, the property of finite sample expressivity finds its analog in the \emph{ensemble} or \emph{simultaneous controllability}, wherein one requires only $1$ pair of parameters/controls to steer $N$ trajectories of the same system to $N$ prescribed targets -- this is the property we show in \Cref{prop_Nleqd+1}. 
	 The literature on such controllability results of neural ODEs, mostly relying on geometrical techniques such as Lie brackets techniques 
	 (see \cite[Chapter 3, Section 3.3]{coron2007control}), under specific constraints on the activations function, is vast (see e.g. \citep{cuchiero2019deep, agrachev2020control, tabuada2020}). 
	 We refer to \citep{ruiz2020universal} for further results in this direction.

	\section{Continuous space-time neural networks} \label{sec: resnet.variable}
	
	\noindent
	We now come back to the scheme \eqref{eq: resnet.discrete} defining a ResNet with $N_{\text{layers}}\geqslant2$ layers.
	 Whilst such networks are widely used in practice, in the discrete-time context, they do not take into account variations of the dimensions of the weights and states over layers. Such variations may arise when considering \emph{convolutional} and/or \emph{pooling} layers, which are ubiquitous in tasks in computer vision. 
	 In such tasks, it is moreover of interest to view the data itself as being continuum objects. 
	 \smallskip
	 
	To be more specific, we note that in the simplest nonlinear context, a residual network with variable dimensions analog to \eqref{eq: resnet.discrete} takes the form (see \citep{he2016deep})
	\begin{equation} \label{eq: variable.dim}
	\begin{dcases}
	\*x_i^{k+1} = \Pi^k\*x_i^k + \sigma\left(w^{k} \*x_i^k + b^k\right) &\text{ for } k \in \{0, \ldots, N_{\text{layers}}-1\} \\
	\*x_i^0 = \vec{x}_i.
	\end{dcases}
	\end{equation}
	Here, contrary to \eqref{eq: resnet.discrete}, we have $w^k \in \R^{d_{k+1}\times d_k}$ and $b^k \in \R^{d_{k+1}}$, and thus $\*x^k \in \R^{d_{k}}$ for $k \in \{0, \ldots, N_{\text{layers}}\}$, where $\{d_k\}_{k=0}^{N_{\text{layers}}}$ are given positive integers, called widths of the layers $k$. One imposes $d_0 = d$, and $\Pi^k \in \R^{d_{k+1}\times d_k}$ is a projection/embedding operator which serves to match dimensions.  Much like in the fixed width case, we may also write the variable-width ResNet when $\mathfrak{f}$ is parametrized as in \eqref{eq: sigma.inside} or otherwise.
	\smallskip
	
	\noindent
	\textbf{The continuous space-time network.}
	It is not immediately obvious how one can see \eqref{eq: variable.dim} as a numerical scheme for some continuous-time dynamical system in the flavor of \eqref{eq: standard.dyn.sys}.
	Nevertheless, this can be achieved by viewing the changing dimension over time-steps as an additional (spatial) variable, thus yielding an integro-differential equation in the continuum.
	\smallskip
	 
	 \noindent
	 To be more precise, for any $i \in [N]$ we consider the scalar integro-differential equation
	\begin{equation} \label{eq: nonlocal.sigma.outside}
	\begin{dcases}
	\del_t \*x_i(t, x) = \sigma\left(\int_\Omega w(t, x, \xi) \*x_i(t, \xi) \diff \xi + b(t,x) \right) &\text{ for } (t,x)\in (0, T) \times \Omega \\
	\*x_i(0, x) = \*x_i^\ini(x) &\text{ for }  x\in \Omega.
	\end{dcases}
	\end{equation}
	Here $\Omega \subset \R^{d_\Omega}$ is a bounded domain, where $d_\Omega\geqslant 1$. 
	We emphasize that $\*x_i(t, x) \in \R$ for $(t, x) \in (0, T)\times \Omega$, and similarly, $w(t, x, \xi) \in \R$ and $b(t,x) \in \R$ for $(x, \xi) \in \Omega\times\Omega$.
	The initial datum $\*x_i^\ini \in C^0(\overline{\Omega})$ is such that there exist $\{x_j\}_{j=1}^d \subset \Omega$ such that $\*x_i^\ini(x_j) = (\vec{x}_i)_{j}$. 
	Such a datum can always be found (e.g. by interpolation). 
	The continuum model \eqref{eq: nonlocal.sigma.outside} is proposed in \citep{liu2019selection} where well-posedness is established, and is also suggested in \citep{weinan2017proposal} albeit in a slightly different context. 
	We distinguish two typical cases for choosing the shape of $\Omega$ as well as $d_\Omega$.
	 
	\begin{itemize}
	\item\textbf{Variable-width ResNets.}
	If in the discretized level, we seek to simply obtain a variable-width residual network such as \eqref{eq: variable.dim} (or even the standard ResNet analog \eqref{eq: resnet.discrete}), it suffices to consider $\Omega = (0, 1)$, thus $d_\Omega=1$. 
	We give more detail on possible possible discretizations in \Cref{sec: numerical.analysis} and \Cref{rem: particle.method}.
	\smallskip

	\item \textbf{Convolutional Neural Networks.} 
	The situation is slightly more delicate in the case of CNNs\footnote{The mathematical theory of structural properties of CNNs in feed-forward form (without skip-connections) is well-established -- see for instance \citep{mallat2016understanding} and the references therein.}, which are typically used in tasks arising in computer vision. We provide a proposal covering the continuous-time analog of CNNs with partial generality.
	 
	Assume that the dataset $\{\vec{x}_i\}_{i\in[N]}$ consists of $N$ images: $\vec{x}_i \in \R^{d_1\times d_2\times d_\text{ch}}$ for any $i$; here $d_1$ (resp. $d_2$) denote the number of horizontal (resp. vertical) pixels in the image $\vec{x}_i$, whereas $d_\text{ch}$ denotes the number of channels, i.e. the color format (e.g. $d_\text{ch} = 1$ for grayscale, $d_\text{ch} = 3$ for RGB).  
	In this case, we consider $\Omega := \Omega_\img \times (0, 1)$, where $\Omega_\img \subset \R^2$ is a rectangle. Thus $d_\Omega = 3$. Moreover, we assume that the weights $w$ in \eqref{eq: nonlocal.sigma.outside} are compactly supported and of a specific \emph{convolutional} form (as indicated in most works, this is more so a \emph{cross-correlation} form), namely, for any $i$, the equation takes the form
	\begin{equation*}
	\del_t \*x_i(t,x,\zeta) = \sigma\left(\int_0^1 \int_{\Omega_\img} w(t,x+\xi,\omega,\zeta)\*x_i(t,\xi,\omega) \diff \xi \diff \omega + b(t,x,\zeta) \right) 
	\end{equation*}
	for $(t,x,\zeta)\in(0,T)\times\Omega_\img\times (0,1)$. We note that the variable $x\in \Omega_\img$ denotes a pixel, whereas $\zeta \in (0,1)$ is a continuous variable indicating, when discretized, the number of extracted features (namely the number of filters). 
	The bias parameter $b$ can be omitted in this case, if desired.
	
	One possible way to discretize the above continuous-time model and obtain a CNN-ResNet as in \citep{he2016deep} is to follow the arguments in \Cref{sec: numerical.analysis}, where one would use a time-dependent grid for discretizing with respect to the variable $\zeta \in (0,1)$ as well, as the number of filters commonly varies over layers in CNNs.
	By discretizing $\Omega_\img$ with a "shrinking" or "expanding" time-dependent rectangular grid, some effects of padding or pooling (but not max-pooling a priori) may also be considered. However, a full CNN--applicable theory is beyond the scope of this work.   
	\end{itemize}
		
	\begin{remark} \label{rem: particle.method}
	Observe that the continuous space-time model \eqref{eq: nonlocal.sigma.outside} (resp. \eqref{eq: nonlocal.sigma.inside}) is more general and englobes \eqref{eq: standard.dyn.sys} -- \eqref{eq: sigma.outside} (resp. \eqref{eq: standard.dyn.sys} -- \eqref{eq: sigma.inside}), where only the time variable is considered to be continuous.
	Indeed, fix $d$ different points $\left\{ x_1,\ldots, x_d\right\}\in \Omega$, 
	and let $\delta_{x_j}$ denote the Dirac mass centered at $x_j$.
	For any $i\in [N]$, we consider the initial datum
	\begin{equation*}
	\*x_i^\ini(x) := \sum_{j=1}^{d} (\vec{x}_i)_j\, \delta_{x_j}(x) \hspace{1cm} \text{ for } x \in \Omega.
	\end{equation*}
	We write the weight $w$ as 
	\begin{equation*}
	w(t,x,\zeta) := \sum_{j=1}^d\sum_{\ell=1}^d w_{j,\ell}(t)\delta_{x_j}(x)\delta_{x_\ell}(\zeta) \hspace{1cm} \text{ for } (t,x,\zeta) \in (0,T)\times \Omega\times\Omega,
	\end{equation*}
	yielding the matrix $[w_{j,\ell}(t)]_{1\leqslant j, \ell \leqslant d}$ of weights at time $t$, whereas the bias $b(t,x)$ is written as
	\begin{equation*}
	b(t,x) := \sum_{j=1}^d b_{j}(t)\delta_{x_j}(x) \hspace{1cm} \text{ for } (t,x) \in (0,T)\times \Omega,
	\end{equation*}
	yielding the vector $[b_j(t)]_{1\leqslant j\leqslant d}$ of biases at time $t$.
	As $\*x_i^\ini$, $w$ and $b$ are all linear combinations of Dirac masses, by plugging them in \eqref{eq: nonlocal.sigma.outside}, we rewrite the integrals as sums, and setting, for any $i \in [N]$,
	\begin{equation*}
	(\*x_i)_j(t) := \displaystyle\int_\Omega \*x_i(t,x) \diff\delta_{x_j}(x)
	\end{equation*}
	for $j \in [d]$, we see that $(\*x_i)_j$ solves
	\begin{equation*}
	\begin{dcases}
	(\dot{\*x}_i)_j(t) = \sigma \left( \sum_{\ell=1}^d w_{j,\ell}(t)\, (\*x_i)_\ell(t) + b_j(t)\right) &\text{ for } t\in (0,T) \\
	(\*x_i)_j(0) = (\vec{x}_i)_j.
	\end{dcases}
	\end{equation*}
	This is just the $j$--th equation of the \eqref{eq: standard.dyn.sys} -- \eqref{eq: sigma.outside} for $i \in [N]$.
	\end{remark}
	
	\noindent		
	Correspondingly for $i \in [N]$ we may consider
	\begin{equation} \label{eq: nonlocal.sigma.inside}
	\begin{dcases}
	\del_t \*x_i(t, x) = \int_0^1 w(t,x,\xi) \sigma(\*x_i(t,\xi)) \diff \xi + b(t,x) &\text{ in } (0, T)\times \Omega \\
	\*x_i(0, x) = \*x_i^\ini(x) &\text{ in } \Omega.
	\end{dcases}
	\end{equation}
	All of the above discussions also apply for this system.
	
	\subsection{From continuous to discrete} \label{sec: numerical.analysis}
	
	The passage from \eqref{eq: nonlocal.sigma.outside} to a discrete-time scheme such as \eqref{eq: variable.dim} is not immediately obvious, and to our knowledge has not been presented in the literature.
	To proceed, it is important to observe the inherent link between the layer $k$ and the width $d_k$ in \eqref{eq: variable.dim}. 
	This motivates discretizing \eqref{eq: nonlocal.sigma.outside} in the spatial variable $x \in (0, 1)$ by using a \emph{time-dependent grid}, which has a different number of nodes $d_k$ at each time-step. 
	We give more detail on this in what follows.
	\smallskip 
	
	Let us demonstrate that \eqref{eq: nonlocal.sigma.outside} which reads\footnote{The choice of the spatial interval $[0, 1]$ is completely arbitrary -- one may of course consider any bounded interval of $\R$.} (we omit the dependence on $i$ for notational simplicity)
	\begin{equation*}
	\begin{dcases}
	\del_t \*x(t, x) = \sigma\left(\int_0^1 w(t, x, \xi) \*x(t, \xi) \diff \xi + b(t, x) \right) &\text{ in } (0, T) \times (0, 1) \\
	\*x(0, x) = \*x^\ini(x) &\text{ in } (0, 1),
	\end{dcases}
	\end{equation*}
	where $\*x^\ini\in C^0([0,1])$ is such that $\*x^\ini(x_j) = \vec{x}_{, j}$ for some $\{x_j\}_{j=1}^d \subset [0, 1]$, can be discretized to read exactly as
	\begin{equation} \label{eq: discrete.resnet.overload}
	\begin{dcases}
	\*x^{k+1} = \Pi^k \*x^k + \sigma\left(w^k \*x^k + b^k \right) &\text{ for } k \in \{0, \ldots, N_{\text{layers}}-1\} \\
	\*x^0 = \vec{x}.
	\end{dcases}
	\end{equation}
	Here $\*x^k \in \R^{d_k}$, $w^k \in \R^{d_{k+1}\times d_k}$ and $b^k \in \R^{d_{k+1}}$, with $d_0:=d$ and $\{d_k\}_{k=1}^{N_{\text{layers}}}$ given positive integers, and $\Pi^k \in \R^{d_{k+1}\times d_k}$.
	
	The derivation below is purely for illustrative purposes -- an adaptive solver ought to perform better than an adaptation of an Euler scheme as \eqref{eq: discrete.resnet.overload}. Moreover, the subsequent arguments will of course also apply to \eqref{eq: nonlocal.sigma.inside}.
	
	Let 
	\begin{equation*}
	\left\{t^0, \ldots, t^{N_{\text{layers}}}\right\}, \hspace{1cm} \text{ with } t^0:= 0 \text{ and } t^{N_{\text{layers}}} :=T,
	\end{equation*}
	be a given, non-decreasing sequence of time-steps. 
	For simplicity of presentation, let us assume that the time-steps are uniform, namely $t^{k} =k\Delta t$ with $\Delta t = \frac{T}{N_{\text{layers}}}$, but more general time-adaptive sequences can be considered.
	For any $k \in \{0, \ldots, N_{\text{layers}}\}$, let us assume that we are given a grid
	\begin{equation*}
	\left\{x_j\left(t^k\right)\right\}_{j=1}^{d_k} \subset [0, 1]
	\end{equation*}
	which is ordered and uniformly distributed. 
	For simplicity of presentation, in our discussion we will assume that $x_1(t^k) = 0$ and $x_{d_k}(t^k) = 1$ for any $k$.
	However by means of a time-step-dependent dilation, this restriction may be removed.
	We note that, not only there might be no overlap of grid nodes over different time-steps, but moreover, the number of grid nodes changes at each time-step $k$.
	
	We will seek for an appropriate discretization of
	\begin{equation} \label{eq: on.nodes}
	\del_t \*x(t^{k+1}, x_j(t^{k+1})) = \sigma\left( \int_0^1 w\left(t^{k+1}, x_j(t^{k+1}), \xi\right) \*x(t^k, \xi) \diff \xi + b\left(t^{k+1}, x_j(t^{k+1})\right) \right)
	\end{equation}
	for $k \in \{0, \ldots, N_{\text{layers}}-1\}$ and $j \in \{1, \ldots, d_{k+1}\}$. 
	Hence, in view of the preceding discussion, some kind of interpolation may be needed to justify a backward Euler discretization of the time derivative $\del_t\*x$ appearing in \eqref{eq: on.nodes} at the grid nodes. 
	
	For any given $k \in \{0, \ldots, N_{\text{layers}}-1\}$ and $j \in \{1, \ldots, d_k\}$, we shall henceforth denote
	\begin{equation*}
	x_j^k := x_j(t^k), \hspace{1cm} \*x_j^k := \*x(t^k, x_j^k).
	\end{equation*}
	Following through the above discussion, the main issue in writing down a forward difference discretization to $\del_t \*x(t^{k+1}, x_j(t^{k+1}))$ appears whenever for a given $k$ one has $d_k\neq d_{k+1}$, as it is a priori not possible to make sense of the expression $\*x(t^{k+1}, x_j(t^{k+1}))-\*x(t^k, x_j(t^k))$ for $j\neq1$. 
	Indeed, all $\iota \in \{2, \ldots, d_{k}\}$ are such that $x_\iota(t^{k}) \notin \{x_j(t^{k+1})\}_{j=1}^{d_{k+1}}$, due to the uniformity of the grid.
	
	Let us give an elementary argument for addressing this issue. 
	Given $k$ and given any $j \in \{1, \ldots, d_{k+1}\}$, there clearly exists $\iota \in \{2,\ldots, d_k \}$ such that $x_j^{k+1} \in [x_{\iota-1}^k, x_\iota^k]$.
	For such indices, we may thus define the linear interpolant
	\begin{equation} \label{eq: interpolant}
	\widehat{\*x}_j^k := \*x_\iota^k + \frac{\*x_\iota^k -\*x_{\iota-1}^k}{x_\iota^k - x_{\iota-1}^k}\left(x_j^{k+1}-x_\iota^k\right).
	\end{equation}
	This is nothing but an approximation of the first order Taylor expansion of $\*x(t^{k+1}, x_j(t^{k+1}))$ with respect to the second variable. 
	Using this interpolant, we may consider the simple forward difference 
	\begin{equation} \label{eq: difference}
	\del_t \*x(t^{k+1}, x_j(t^{k+1})) \approx \frac{\*x_j^{k+1} - \widehat{\*x}_j^k}{\Delta t}
	\end{equation}
	for any $k \in \{0, \ldots, N_{\text{layers}}-1\}$ and any $j \in \{1, \ldots, d_{k+1}\}$.
	We may now use any Newton-Cotes formula to discretize the integral term in \eqref{eq: on.nodes}: for $j \in \{1, \ldots, d_{k+1}\}$, we write
	\begin{equation} \label{eq: newton.cotes}
	\int_0^1 w\left(t^{k+1}, x_j(t^{k+1}), \xi\right) \*x(t^k, \xi) \diff \xi \approx \sum_{\iota=1}^{d_k} \alpha_\iota w\left(t^{k+1}, x_j(t^{k+1}), x_\iota(t^k)\right) \*x(t^k, x_\iota(t^k)).
	\end{equation}
	Here, $\alpha_\iota>0$ are the corresponding weights of the chosen Newton-Cotes formula.
	 
	Let us now define 
	\begin{equation*}
	\*x^k := \begin{bmatrix} \*x(t^k, x_1(t^k)) \\ \vdots \\ \*x(t^k, x_{d_k}(t^k))\end{bmatrix} \in \R^{d_k}, \hspace{1cm} b^k := \begin{bmatrix} b(t^{k+1}, x_1(t^{k+1})) \\ \vdots \\ b(t^{k+1}, x_{d_{k+1}}(t^{k+1})) \end{bmatrix} \in \R^{d_{k+1}}	
	\end{equation*}
	and
	\begin{equation*}
	w^k := \left[ \alpha_\iota w(t^{k+1}, x_j(t^{k+1}), x_\iota({t^k})) \right]_{1 \leqslant j \leqslant d_{k+1} \\ 1 \leqslant \iota \leqslant d_k} \in \R^{d_{k+1}\times d_k}.
	\end{equation*}
	The above definitions, as well as \eqref{eq: difference} and \eqref{eq: newton.cotes} applied to \eqref{eq: on.nodes}, lead us to \eqref{eq: discrete.resnet.overload}, where $\Delta t$ has been "omitted" as a factor of the nonlinearity. In view of \eqref{eq: interpolant}, the operator $\Pi^k\in\R^{d_{k+1}} \times \R^{d_k}$ takes the explicit form
	\begin{equation*}
	\Pi^k = \sum_{j=1}^{d_{k+1}} \left( \left\{ 1+\frac{x_j^{k+1}-x_{\iota(j)}^k}{x_{\iota(j)}^k-x_{\iota(j)-1}^k} \right\}\overline{e}_j e_{\iota(j)}^\tr - \frac{x_j^{k+1}-x_{\iota(j)}^k}{x_{\iota(j)}^k-x_{\iota(j)-1}^k} \overline{e}_j e_{\iota(j)-1}^\tr \right),
	\end{equation*}
	where $\iota(j) \in \{2, \ldots, d_k\}$ is such that $x_j^{k+1} \in [x_{\iota(j)-1}^k, x_{\iota(j)}^k]$, while $\{\overline{e}_j\}_{j=1}^{d_{k+1}}$ and $\{ e_j\}_{j=1}^{d_k}$ denote the canonical bases of $\R^{d_{k+1}}$ and $\R^{d_k}$ respectively. 
	We notice that the matrix $\Pi^k$ only has $2$ non-zero elements at every row $j \in \{1, \ldots, d_{k+1}\}$. This concludes our derivation.
	
	\begin{remark}[Generating moving grids]
	Whilst we have assumed a very simple given time-dependent grid, one may certainly generate more sophisticated moving grids  -- we refer to \citep[Section 3]{budd2009adaptivity} for a comprehensive overview on the existing methods, which have found extensive use in the discretization of partial differential equations manifesting shock waves and/or free boundaries.
	\end{remark}
		
	\subsection{The supervised learning problem} 
	Given a training dataset $\{\vec{x}_i, \vec{y}_i\}_{i\in[N]}$ with $\vec{x}_i \in \R^d$ for any $i\in[N]$, just as in the finite dimensional context, we begin by writing the equation satisfied by the stacked vector of states $\*x := [\*x_1, \ldots, \*x_N]^\top$ corresponding to the stacked vector of data $\*x^\ini := [\*x_1^\ini, \ldots, \*x^\ini_N]^\top$, 
where each $\*x_i$ is the solution to either \eqref{eq: nonlocal.sigma.outside} or \eqref{eq: nonlocal.sigma.inside} corresponding to the datum $\*x^\ini_i$, and control parameters $[w,b]$ which are the same for all $i$. 
	The stacked continuous space-time neural networks we consider are thus either
	\begin{equation} \label{eq: stacked.nonlocal.sigma.outside}
	\begin{dcases}
	\del_t \*x(t,x) = \sigma\left(\int_\Omega \*w(t,x,\xi)\*x(t,\xi)\diff\xi+\*b(t,x) \right) &\text{ in } (0,T)\times \Omega\\
	\*x(0, x) = \*x^\ini(x) &\text{ in } \Omega
	\end{dcases}
	\end{equation}
	or
	\begin{equation} \label{eq: stacked.nonlocal.sigma.inside}
	\begin{dcases}
	\del_t \*x(t,x) = \int_\Omega \*w(t,x,\xi)\sigma(\*x(t,\xi))\diff\xi+\*b(t,x) &\text{ in } (0,T)\times \Omega\\
	\*x(0, x) = \*x^\ini(x) &\text{ in } \Omega.
	\end{dcases}
	\end{equation}
	Just as in the finite-dimensional case, it is important to note how $[w(t,x,\xi),b(t,x)]$ for $(t,x,\xi) \in (0,T)\times \Omega\times \Omega$ enter the systems:
	\begin{equation*} \label{eq: controls.nonlocal}
	\*w(t,x,\xi):=	\begin{bmatrix}
    w(t,x,\xi) & & \\
    &\ddots& \\
    & & w(t,x,\xi)  \end{bmatrix} \in \R^{N\times N}, \hspace{0.5cm} \*b(t,x) := \begin{bmatrix}b(t,x) \\\vdots \\b(t,x) \end{bmatrix}
 \in \R^{N}.
	\end{equation*}
	
	\subsubsection{Empirical risk minimization} 
	As before, we first consider the regularized empirical risk minimization problem
	\begin{equation} \label{eq: nonlocal.learning}
	\inf_{\substack{[w,b] \in H^k(0, T; \mathfrak{U} )\\ \text{ subject to } \eqref{eq: stacked.nonlocal.sigma.outside} \text{ (resp.} \eqref{eq: stacked.nonlocal.sigma.inside})}} \mathscr{E}(\*x(T)) + \lambda \Big\| [w,b] \Big\|^2_{H^k(0,T;\mathfrak{U} )},
	\end{equation}
	where $\alpha>0$ is fixed, $k=0$ for \eqref{eq: stacked.nonlocal.sigma.inside} and $k=1$ for \eqref{eq: stacked.nonlocal.sigma.outside}, 
	$
	\mathfrak{U}  := L^2(\Omega\times \Omega)\times L^2(\Omega),
	$
	and we define the training error as 
	\begin{equation} \label{eq: training.error.nonlocal}
	\mathscr{E}(\*x(T)) := \frac{1}{N} \sum_{i=1}^N \loss(P\*x_i(T),\vec{y}_i),
	\end{equation}
	where $\loss\in C^0(\R^m\times\mathcal{Y};\R_+)$ and $P:L^2(\Omega)\to\R^m$ is given.
	The optimization problem \eqref{eq: nonlocal.learning} admits a solution by the direct method in the calculus of variations.
	\smallskip
	
	\noindent
	In view of the rather universal nature of the proof to \Cref{thm: no.running} and \Cref{thm: turnpike.P} in the finite-dimensional case, one may in fact roughly repeat the exact same proofs at most points, replacing throughout the finite dimensional euclidean spaces $\R^{d_x}$ and $\R^{d_u}$, by $L^2(\Omega)^N$ and $\mathfrak{U} $ respectively. 
	Whence, we state the infinite-dimensional (partial) analog to \Cref{thm: no.running}.	
	
	\begin{theorem} \label{thm: non.local.scaling}
		Let $\lambda>0$ be fixed, and let $\*x^\ini \in \left(C^0(\overline{\Omega})\right)^N$ be such that $\*x^\ini_i(x_j) = (\vec{x}_{i})_j$. Suppose that $P: L^2(\Omega)\to\R^m$ is any non-zero affine map, and suppose that $\loss\in C^0(\R^{m}\times\mathcal{Y}; \R_+)$ is such that \Cref{ass: regression.ass} is satisfied.
		Assume that \eqref{eq: stacked.nonlocal.sigma.outside} (resp. \eqref{eq: stacked.nonlocal.sigma.inside} with $\sigma$ positively homogeneous of degree $1$) interpolates the dataset $\{\*x^\ini_i, \vec{y}_i\}_{i\in[N]}$ in time $1$ in the sense of \Cref{def: P-interpolation}. 
	    For any $T\geqslant1$, let $\*x_T \in C^0\big([0,T]; L^2(\Omega)^N\big)$ be the unique solution to \eqref{eq: stacked.nonlocal.sigma.outside} (resp. \eqref{eq: stacked.nonlocal.sigma.inside}), associated to any global minimizer $[w_T,b_T] \in H^k(0,T;\mathfrak{U})$ of the functional in \eqref{eq: nonlocal.learning}, where $k=0$ in the case of \eqref{eq: stacked.nonlocal.sigma.inside} and $k=1$ in the case of \eqref{eq: stacked.nonlocal.sigma.outside}.
	    The following properties then hold.
	    \begin{enumerate}
	    \item There exists a constant $\mathfrak{C} = \mathfrak{C}\left(\{\vec{x}_i, \vec{y}_i\}_{i\in[N]}, \lambda\right)>0$ independent of $T$ such that
	    \begin{equation*}
	    \mathscr{E}(\*x_T(T)) \leqslant \frac{\mathfrak{C}}{T}. 
	    \end{equation*}
	    \item There exists a sequence $\{T_n\}_{n=1}^{\infty}$, with $T_n>0$ and $T_n \xrightarrow[n\longrightarrow \infty]{}\infty$, and some $\*x_\circ \in L^2(\Omega)^N$ with $\mathscr{E}(\*x_\circ)=0$ such that, along a subsequence,
	    \begin{equation*}
	    \mathscr{E}\left(\*x_{T_n}(T_n)\right) \xrightarrow[n\longrightarrow \infty]{} 0 
	    \end{equation*}
	    and
	    \begin{equation*}
	    \*x_{T_n}(T_n) \xrightharpoonup[n\longrightarrow \infty]{} \*x_\circ \hspace{1cm} \text{ weakly in } L^2(\Omega)^N.
	    \end{equation*}
	    \end{enumerate}
	\end{theorem}
	
	\noindent
	For the sake of completeness, we give a sketch of the proof -- by indicating the only changes with respect to that of \Cref{thm: no.running}.
	
	\begin{proof}[Proof of \Cref{thm: non.local.scaling}]
	    We note that the infinite-dimensional analog of \Cref{lem: scaling} may easily be shown to hold, and one may readily repeat precisely the same arguments as in the proof of \Cref{thm: no.running}, replacing $\R^{d_u}$ and $\R^{d_x}$ by $\mathfrak{U} $ and $L^2(\Omega)^N$ respectively throughout.
	    The only difference occurs in regarding the arguments on strong $L^2$--convergence of the sequence of controls in the case $k=1$ -- in the infinite dimensional case, we may exhibit the Aubin-Lions compactness lemma instead of Rellich-Kondrachov to conclude.
	\end{proof} 
	
	\subsubsection{Augmented empirical risk minimization}
	
	We similarly consider the augmented supervised learning problem
	\begin{equation} \label{eq: nonlocal.learning.tracking}
	\inf_{\substack{[w,b] \in L^2(0, T; \mathfrak{U} )\\ \text{ subject to } \eqref{eq: stacked.nonlocal.sigma.inside}}} \mathscr{E}(\*x(T))+ \frac{1}{N}\int_0^T \|\*x(t)-\overline{\*x}\|^2_{L^2(\Omega)}\diff t + \lambda \Big\| [w,b] \Big\|^2_{L^2(0,T;\mathfrak{U} )},
	\end{equation}
	where $\mathscr{E}$ is as in \eqref{eq: training.error.nonlocal} and $\loss(\cdot,\cdot)$ satisfies \Cref{ass: loss.turnpike}. We solely concentrate on System \eqref{eq: stacked.nonlocal.sigma.inside} to avoid a possibly abundance of technical details in the context of $\BV_t L^2_x$ analysis. Again, $P:L^2(\Omega)\longrightarrow\R^m$ is a surjective map, and $\*x_i\in P^{-1}(\{\vec{y}_i\})\subset L^2(\Omega)$ for $i\in[N]$ are arbitrary, but fixed.
	As expected, the analog exponential decay result holds for \eqref{eq: nonlocal.learning.tracking}.
	
	\begin{theorem}[Exponential stability] 
	\label{thm: sigma.inside.penalised.nonlocal}
	Fix $\lambda>0$, let $P \in \Lip(L^2(\Omega);\R^m)$ be any given surjective map, and let $\overline{\*x} \in L^2(\Omega)^N$ with $\overline{\*x}_i \in P^{-1}(\{\vec{y}_i\})$ for $i\in[N]$ be arbitrary but fixed.
	Suppose that  \eqref{eq: stacked.nonlocal.sigma.inside} is controllable with linear cost in some time $T_0>0$ in the sense of \Cref{def: ctrl}. 
	Then, there exists $T^*>0$ and constants $\mathfrak{C}=\mathfrak{C}\left(\{\vec{x}_i, \vec{y}_i\}_{i\in[N]},\lambda,N,\alpha, P\right)>0$ and $\mu=\mu\left(\{\vec{x}_i, \vec{y}_i\}_{i\in[N]},\lambda,N,\alpha\right)>0$ such that for any $T\geqslant T^*$, any pair of parameters $\left[w_T, b_T\right] \in L^2(0,T; \mathfrak{U})$ solving the minimization problem \eqref{eq: nonlocal.learning.tracking}, and the corresponding unique solution $\*x_T(\cdot)$ to \eqref{eq: stacked.nonlocal.sigma.inside} satisfy
	\begin{equation*} 
	\big\|w_T(t)\big\|+\big\|b_T(t)\big\| \leqslant \mathfrak{C}
	\end{equation*}
	for a.e. $t \in [0,T]$ and
	\begin{equation*} 
	\mathscr{E}(\*x_T(t)) + \|\*x_T(t)-\overline{\*x}\|_{L^2(\Omega)} \leqslant \mathfrak{C}\,e^{-\mu t}
	\end{equation*}
	for all $t \in [0,T]$.
	\end{theorem}
	
	\noindent
	The proof is omitted and left to the reader, as it follows precisely the same arguments as that of \Cref{thm: turnpike.P}.
	
	\section{Proofs} \label{sec: proofs}
	
	\subsection{Proof of \Cref{thm: no.running}} \label{sec: proof.no.running}
		   
	We note that both \eqref{eq: sigma.outside.i} and  \eqref{eq: sigma.inside.i} can be written in the compact form
	\begin{equation} \label{eq: compact.form.homogeneous}
	\begin{dcases}
	\dot{\*x}(t) = \mathfrak{f}([w(t), b(t)], \*x(t)) &\text{ in } (0, T) \\
	\*x(0) = \*x^0 \in \R^{d_x},
	\end{dcases}
	\end{equation}
	with 
	\begin{equation} \label{eq: homogeneity}
	\mathfrak{f}([0, 0], \*x) = 0, \hspace{0.75cm} \mathfrak{f}([\alpha w,\alpha b],\*x) = \alpha \mathfrak{f}([w, b], \*x) \hspace{0.25cm} \text{ for } \alpha > 0.
	\end{equation}
	We will refer to $u:= [w,b]$ as \emph{the control} of the ODE system, in accordance with control theory vocabulary.
    	We begin with following short but key lemma.
    
    	 \begin{lemma} \label{lem: scaling}
	Let $T_0>0$ and $\left[w_{T_0}, b_{T_0}\right] \in L^1(0, T_0; \R^{d_u})$ be given, and let $\*x_{T_0}(\cdot)$ be the unique solution to 
	\begin{equation} \label{eq: 5.7}
	\begin{dcases}
	\dot{\*x}_{T_0}(t) = \mathfrak{f}\left(\left[w_{T_0}(t), b_{T_0}(t)\right], \*x_{T_0}(t)\right) &\text{ in } (0, T_0) \\
	\*x_{T_0}(0) = \*x^0 \in \R^{d_x},
	\end{dcases}
	\end{equation}
	(i.e. \eqref{eq: compact.form.homogeneous} on $(0,T_0)$) with $\mathfrak{f}$ as in either \eqref{eq: sigma.inside.i} or \eqref{eq: sigma.outside.i}, thus satisfying \eqref{eq: homogeneity}. 
	Let $T>0$, and define
	\begin{equation*} \label{eq: uT}
	w_T(t):= \frac{T_0}{T} w_{T_0}\left(t\frac{T_0}{T} \right), \hspace{0.5cm} b_T(t):=\frac{T_0}{T} b_{T_0}\left(t\frac{T_0}{T} \right)  \hspace{1cm} \text{ for } t\in [0, T],
	\end{equation*}
	and 
	\begin{equation*} \label{eq: xT}
	\*x_T(t) := \*x_{T_0}\left(t\frac{T_0}{T}\right) \hspace{1cm} \text{ for } t \in [0, T].
	\end{equation*}
	Then $\*x_T(\cdot)$ is the unique solution to \eqref{eq: compact.form.homogeneous} (with the same $\mathfrak{f}$ as in \eqref{eq: 5.7}) associated to $\left[w_T, b_T\right]$.
	\end{lemma}
	
	\noindent
	This sort of time-scaling in the context of \emph{driftless control affine} systems is commonly used in control theoretical contexts -- a canonical example is the proof of the Chow-Rashevskii controllability theorem, see \cite[Chapter 3, Section 3.3]{coron2007control}. 
	We sketch the short proof for completeness.
    
    	\begin{proof}[Proof of \Cref{lem: scaling}]
	Since $\*x_{T_0}$ is the solution to \eqref{eq: 5.7}, the change of variable $\tau = s\frac{T}{T_0}$ as well as \eqref{eq: homogeneity}, we have
	\begin{align*}
	\*x_T(t) :=  \*x_{T_0}\left(t\frac{T_0}{T}\right) &= \*x^0 +\int_0^{t\frac{T_0}{T}} \mathfrak{f}\left(\left[w_{T_0}(s), b_{T_0}(s)\right], \*x_{T_0}(s)\right) \diff s \nonumber\\
	&= \*x^0  + \int_0^t \frac{T_0}{T} \mathfrak{f}\left(\left[w_{T_0}\left(\tau\frac{T_0}{T}\right), b_{T_0}\left(\tau\frac{T_0}{T}\right)\right], \*x_{T_0}\left(\tau\frac{T_0}{T}\right) \right)\diff \tau\nonumber\\
	&= \*x^0 + \int_0^t \mathfrak{f}\left(\left[w_T(\tau), b_T(\tau)\right], \*x_T(\tau)\right) \diff \tau.
	\end{align*}
	It follows that $\*x_T$ solves \eqref{eq: compact.form.homogeneous}, and we conclude by uniqueness.
	\end{proof}
    	
	\noindent
	We will also need the following lemma.
	
	\begin{lemma}[Compactness of the flow]
	\label{lem: compact.flow}
	Let $T>0$ be fixed. 
	The maps 
	\begin{enumerate}
	\item $\Phi_T: [w,b]\longmapsto\*x(\cdot)$ mapping $L^2(0,T;\R^{d_u})$ to $C^0([0,T]; \R^{d_x})$ where $\*x(\cdot)$ solves \eqref{eq: sigma.inside.i}, 
	\smallskip
	\item $\Phi_T: [w,b]\longmapsto\*x(\cdot)$ mapping $L^2(0,T;\R^{d_u})\cap\BV(0,T;\R^{d_u})$ to $C^0([0,T]; \R^{d_x})$ where $\*x(\cdot)$ solves \eqref{eq: sigma.outside.i}, 
	\smallskip
	\item $\Phi_T: [w,b]\longmapsto\*x(\cdot)$ mapping $H^1(0,T;\R^{d_u})$ to $C^0([0,T]; \R^{d_x})$ where $\*x(\cdot)$ solves \eqref{eq: sigma.outside.i}, 
	\end{enumerate} 
	are all compact.
	\end{lemma}
	
	\noindent
	We postpone the proof to the appendix.
	We are now in a position to prove the main result.

\begin{proof}[Proof of \Cref{thm: no.running}]
	We will henceforth, for notational convenience, extensively make use of the notation $u:=[w,b]$. We will focus on the neural ODE \eqref{eq: sigma.inside.i} and hence $k=0$. The case \eqref{eq: sigma.outside.i} and $k=1$ follows exactly the same arguments, and we will comment on the key differences at the end of the proof.
	\smallskip
	
	\noindent
	\textbf{Part 1.} We begin by showing 
	\begin{equation} \label{eq: first.step.regression}
	\mathscr{E} \left(\mathbf{x}_T(T)\right) \lesssim T^{-1}
	\end{equation}
	uniformly in $T$. By the interpolation assumption, there exists some $u^{1} \in L^2(0,1; \R^{d_u})$ such that the associated solution $\*x^{1}$ to \eqref{eq: sigma.inside.i} on $[0,1]$ satisfies $\mathscr{E}(\*x^1(1)) = 0$.
	Using the optimality of $u_T$ and the scaling relations from \Cref{lem: scaling}, we obtain
	\begin{align*}
	J_{\lambda, T}\left(u_T\right) &= \mathscr{E}\left(\*x_T(T)\right) + \lambda \left\|u_T\right\|^2_{L^2(0,T; \R^{d_u})} \\
	&\leqslant 
	\mathscr{E}(\*x^{1}(1)) + \frac{\lambda}{T}\left\| u^{1}\right\|_{L^2(0,1; \R^{d_u})}^2
	\end{align*}
	for all $T>0$. 
	Since $\mathscr{E}(\*x^{1}(1))=0$ by the interpolation assumption, the above inequality implies
	\begin{align} \label{eq: 5.28}
	0 \leqslant \mathscr{E}\left(\*x_T(T)\right)\leqslant \frac{\lambda}{T}\left\| u^{1}\right\|_{L^2(0,1; \R^{d_u})}^2
	\end{align}
	for all $T>0$. Estimate \eqref{eq: 5.28} clearly implies \eqref{eq: first.step.regression}. 
	
	\smallskip
	
	\noindent
	\textbf{Part 2.} 
	We now look to prove \eqref{eq: features.converge}. 
	To this end, we will look to show that $\{ \*x_T(T)\}_{T>0}$ is a bounded subset of $\R^{d_x}$. This will allow us to extract a converging sequence, whose limit will be shown to lie in $\{\mathscr{E}=0\}$.
	
	For any $T>0$, set 
	\begin{equation*}
	u^\aux(t) := \frac{1}{T} u^{1}\left(\frac{t}{T}\right) \hspace{1cm} \text{ for } t \in [0, T].
	\end{equation*}
	We argue similarly as in Part 1.
	Making use of \Cref{lem: scaling} once again, and since $\mathscr{E}(\*x^{1}(1))=0$, we see that
	\begin{align} \label{eq: A.9}
	J_{\lambda,T} \left(u^\aux\right) &=  \mathscr{E}\left(\*x^{1}(1)\right) + \frac{\lambda}{T} \left\| u^{1}\right\|^2_{L^2(0,1;\R^{d_u})} \nonumber\\ 
	&=\frac{\lambda}{T} \left\| u^{1} \right\|^2_{L^2(0,1;\R^{d_u})}.
	\end{align}
	Using the optimality of $u_T$, one sees that
	\begin{equation} \label{eq: A.10}
	J_{\lambda,T} \left(u^\aux\right) \geqslant J_{\lambda, T} \left(u_T\right) \geqslant  \lambda \left\| u_T\right\|_{L^2(0,T; \R^{d_u})}^2. 
	\end{equation}
	Combining \eqref{eq: A.10} and \eqref{eq: A.9}, we deduce that
	\begin{equation} \label{eq: bound_uT}
	\left\| u_T \right\|^2_{L^2(0,T;\R^{d_u})} \leqslant \frac{1}{T} \left\| u^{1} \right\|^2_{L^2(0,1;\R^{d_u})}
	\end{equation}
	for any $T>0$.
	Now by integrating \eqref{eq: sigma.inside.i}, and using the fact that $\sigma$ is globally Lipschitz continuous with constant $C(\sigma)>0$ and satisfies $\sigma(0) = 0$, for any $t \in [0,T]$ we have
    \begin{align*}
    \left\|\*x_T(t) - \*x^0\right\|\leqslant N^{\sfrac{1}{2}} C(\sigma) \int_0^t \left\|w_T(s)\right\|\left\|\*x_T(s)\right\| \diff s+N^{\sfrac{1}{2}}\left\|b_T\right\|_{L^1(0,T; \R^{d})}.
    \end{align*}
    By using the Gr\"onwall inequality, we obtain
    \begin{equation*}
        \left\|\*x_T(T) - \*x^0\right\|\leqslant N^{\sfrac{1}{2}}\left\|b_T\right\|_{L^1(0,T;\R^{d})}\exp\left(N^{\sfrac{1}{2}}C(\sigma)\int_0^T \left\|w_T(s)\right\|\diff s\right),
    \end{equation*}
    whereas by Cauchy-Schwarz, it follows that
    \begin{equation*}
        \left\|\*x_T(T) - \*x^0\right\|\leqslant T^{\sfrac{1}{2}}N^{\sfrac{1}{2}}\left\|b_T\right\|_{L^2(0,T; \R^{d})}\exp\left(T^{\sfrac{1}{2}}N^{\sfrac{1}{2}}C(\sigma) \left\|w_T\right\|_{L^2(0,T; \R^{d\times d})}\right).
    \end{equation*}
    At this point, employing \eqref{eq: bound_uT}, we deduce
    \begin{equation*}
        \left\|\*x_T(T) - \*x^0\right\|\leqslant N^{\sfrac{1}{2}} \left\|u^{1}\right\|_{L^2(0,1;\R^{d_u})}
   \,\exp\left(N^{\sfrac{1}{2}}C(\sigma) \left\|u^{1}\right\|_{L^2(0,1;\R^{d_u})}
 \right).
    \end{equation*}
    Since $u^{1}$ is independent of $T$, we conclude that the set $\{\*x_T(T)\}_{T>0}$ is bounded. 
    Whence, there exists a sequence $\{T_n\}_{n=1}^{\infty}$ with $T_n>0$ and $T_n\longrightarrow \infty$ as $n\longrightarrow \infty$ and some $\*x_\circ \in \R^{d_x}$ such that 
	\begin{equation} \label{eq: final.time.conv.xn}
	\*x_{T_n}(T_n) \longrightarrow \*x_\circ \hspace{1cm} \text{ as } n\longrightarrow\infty.
	\end{equation}
	Since $\mathscr{E}\left(\*x_{T_n}(T_n)\right)\longrightarrow 0$ as $n\longrightarrow\infty$ by \eqref{eq: first.step.regression}, by continuity of $\mathscr{E}$, we have $\mathscr{E}(\*x_\circ)=0$.
	This concludes the proof of \eqref{eq: features.converge}.
	\smallskip
	
	\noindent
	\textbf{Part 3.} We now address the third statement of the theorem.
	To this end, we will first show that the sequence $\{u_n\}_{n=1}^{\infty}$ defined in the statement is bounded in $L^2(0,1; \R^{d_u})$.
	
	 Let $u^{\dagger} \in L^2(0,1;\R^{d_u})$ be any solution to 
	\begin{equation} \label{eq: A.13}
		\inf_{\substack{u\in L^2(0,1;\R^{d_u}) \\ \*x(\cdot) \text{ solves } \eqref{eq: sigma.inside.i}  \\ \text{ and } \\ \mathscr{E}(\*x(1))=0}} \int_0^{1} \| u(t)\|^2 \diff t.		
	\end{equation}
	Denote by $\*x^\dagger$ the corresponding solution to \eqref{eq: sigma.inside.i} on $[0,1]$.
	We claim that
	\begin{equation}\label{claim proof thm no.ruuning}
	\|u_n\|_{L^2(0,1; \R^{d_u})} \leqslant \left\| u^\dagger \right\|_{L^2(0,1; \R^{d_u})}, \hspace{1cm} \text{ for all } n\geqslant 1.
	\end{equation}
	We prove this claim by contradiction.
	Indeed, assume that we had 
	\begin{equation*}
	\left\|u^\dagger\right\|_{L^2(0,1; \R^{d_u})} <\|u_n\|_{L^2(0,1; \R^{d_u})} \hspace{1cm} \text{ for some } n\geqslant 1.
	\end{equation*}
	We consider 
	\begin{equation*}
	u^{\dagger}_{n}(t):= \frac{1}{T_n}u^{\dagger}\left( \frac{t}{T_n}\right) \hspace{1cm} \text{ for } t \in [0, T_n],
	\end{equation*}
	whose corresponding state $\*x_{n}^\dagger$, solution to \eqref{eq: sigma.inside.i} on $[0,T_n]$, satisfies $\*x^{\dagger}_{n}(T_n)= \*x^\dagger(1)$ by \Cref{lem: scaling}. On another hand, by assumption we have $\mathscr{E}(\*x^{\dagger}(1))=0$.
	It then follows that
	\begin{align*}
		J_{\lambda, T_n}\left(u^{\dagger}_{n}\right) &= \frac{\lambda}{T_n}\left\| u^{\dagger}\right\|^2_{L^2(0,1; \R^{d_u})} \\
		&< \mathscr{E}\left(\*x_{T_n}(T_n)\right) + \frac{\lambda}{T_n} \|u_n\|^2_{L^2(0,1; \R^{d_u})} = J_{T_n}\left(u_{T_n}\right),
	\end{align*}
	which contradicts the fact that $u_{T_n}$ minimizes $J_{T_n}$.
	Hence, \eqref{claim proof thm no.ruuning} holds, and $\{u_n\}_{n=1}^{\infty}$ is bounded in $L^2(0, 1; \R^{d_u})$.
	Consequently, by the Banach-Alaoglu theorem, there exists $u^{*} = \left[w^{*}, b^{*}\right] \in L^2(0,1;\R^{d_u})$ such that
	\begin{equation*}
	u_n \rightharpoonup u^{*} \hspace{1cm} \text{ weakly in } L^2(0,1; \R^{d_u}),
	\end{equation*}
	along some subsequence as $n\longrightarrow\infty$. 
	Moreover, using the properties of equation \eqref{eq: sigma.inside.i} (\Cref{lem: compact.flow}), we deduce that
	 the trajectory $\*x_n$ associated to $u_n$ satisfies
	 \begin{equation} \label{eq: strong.conv.xn}
	 \*x_n \longrightarrow \*x^{*} \hspace{1cm} \text{ strongly in } C^0([0, 1]; \R^{d_x})
	 \end{equation}
	 as $n\longrightarrow\infty$, where $\*x^{*}$ is the solution to \eqref{eq: sigma.inside.i} on $[0,1]$, associated to $u^{*}$.
	 On another hand, note that by \Cref{lem: scaling}, $\*x_{T_n}(t) = \*x_n(\frac{t}{T_n})$ for $t \in [0, T_n]$, whence $\*x_{T_n}(T_n) = \*x_n(1)$ and thus, combining \eqref{eq: strong.conv.xn} and \eqref{eq: final.time.conv.xn}, we see that $\*x^{*}(1)=\*x_\circ$.
	 Consequently, $u^{*}$ is a control such that $\mathscr{E}(\*x^{*}(1))=\mathscr{E}(\*x_\circ) =0$, thus satisfying the constraint in \eqref{eq: A.13}.
	In view of this, we may also use \eqref{claim proof thm no.ruuning} and the weak lower semicontinuity of the $L^2$--norm to write
	\begin{align} \label{eq: aux.long.estimates}
	\left\|u^{\dagger}\right\|_{L^2(0,1; \R^{d_u})} \leqslant \left\|u^{*}\right\|_{L^2(0,1; \R^{d_u})} &\leqslant \liminf_{n\longrightarrow \infty} \|u_n\|_{L^2(0,1;\R^{d_u})} \nonumber \\
	&\leqslant \lim_{n\longrightarrow \infty} \|u_n\|_{L^2(0,1;\R^{d_u})} \nonumber \\
	&\leqslant \limsup_{n\longrightarrow \infty} \|u_n\|_{L^2(0,1;\R^{d_u})} \nonumber \\
	&\leqslant \left\|u^\dagger\right\|_{L^2(0,1;\R^{d_u})},
	\end{align}
	clearly implying that
	\begin{equation*} \label{eq: conv.norms}
	\lim_{n\longrightarrow \infty} \left\|u_n\right\|_{L^2(0,1;\R^{d_u})} = \left\|u^{*}\right\|_{L^2(0,1;\R^{d_u})}.
	\end{equation*}
	Hence, as weak convergence and convergence of the norms in $L^2$ implies strong convergence in $L^2$, we deduce that
	\begin{equation*}
	u_n \longrightarrow u^{*} \hspace{1cm} \text{ strongly in } L^2(0, 1; \R^{d_u})
	\end{equation*}
	along some subsequence as $n\longrightarrow\infty$.
	Moreover, from \eqref{eq: aux.long.estimates} we deduce that, since $u^\dagger$ is a solution to \eqref{eq: A.13} and since $u^{*}$ satisfies the constraints therein, $u^{*}$ is a solution to \eqref{eq: A.13} as well, which concludes the proof for \eqref{eq: sigma.inside.i} and $k=0$.
		\smallskip
		
		In the case \eqref{eq: sigma.outside.i} and $k=1$, one may clearly repeat the above reasoning, replacing $L^2(0,T; \R^{d_u})$ by $H^1(0,T;\R^{d_u})$ throughout, with some key additions. 
		
		In Part 1, we first note that instead of \eqref{eq: A.9}, one has
		\begin{align*}
		    J_{\lambda,T}(u^\aux) &= \mathscr{E}(\*x^{1}(1)) + \frac{\lambda}{T} \left\|u^{1}\right\|_{L^2(0,1;\R^{d_u})}^2 + \frac{\lambda}{T^3} \left\|\dot{u}^{1}\right\|_{L^2(0,1;\R^{d_u})}^2\\ 
		    &=\frac{\lambda}{T} \left\|u_{T_0}\right\|_{L^2(0,1;\R^{d_u})}^2 + \frac{\lambda}{T^3} \left\|\dot{u}^{1}\right\|_{L^2(0,1;\R^{d_u})}^2.
		\end{align*}
		This is not an impediment to \eqref{eq: A.10}, which remains true, and one can clearly deduce that $\{\*x_T(T)\}_{T>0}$ is bounded as well. 
		Similarly, \eqref{eq: 5.28} holds with a bound of the form
		\begin{equation*}
		    0 \leqslant \mathscr{E}(\*x_T(T)) \leqslant \frac{\lambda}{T} \left\|u^{1}\right\|_{L^2(0,1;\R^{d_u})}^2 + \frac{\lambda}{T^3} \left\|\dot{u}^{1}\right\|_{L^2(0,1;\R^{d_u})}^2.
		\end{equation*}
		Whence the remainder of parts 1 and 2 hold in this context as well.
		
		In Part 3, we emphasize the sole key difference between \eqref{eq: sigma.inside.i} and \eqref{eq: sigma.outside.i} -- the weak $L^2$--convergence of $\{u_n\}_{n=1}^{\infty}$ is a priori not sufficient to entail the strong convergence in \eqref{eq: strong.conv.xn} in the case of \eqref{eq: sigma.outside.i}. 
		However, by the Rellich-Kondrachov compactness theorem, the weak $H^1$--convergence of $\{u_n\}_{n=1}^{\infty}$ implies a strong $L^2$--convergence along a subsequence, which would yield \eqref{eq: strong.conv.xn} by arguing just as in the proof of \Cref{lem: compact.flow}.
 		
		This concludes the proof.
\end{proof}

	\subsection{Proof of \Cref{thm: no.running.lambda}}
	
	The proof closely follows the lines of that just above.
	Let us consider $k=1$, since the case $k=0$ is equivalent to \Cref{thm: no.running.lambda}.
	We present minimal details for completeness.
	
	\begin{proof}[Proof of \Cref{thm: no.running.lambda}]
	We again make use of the notation $u:=[w,b]$.	
	We first show 
	\begin{equation} \label{eq: linear.lambda.proof}
	\mathscr{E}(\*x_\lambda(T)) \lesssim \lambda 
	\end{equation}
	uniformly in $\lambda>0$ -- we argue as in the proof of \Cref{thm: no.running} just above, exhibiting, by the interpolation assumption, parameters $u^1 \in L^2(0,1; \R^{d_u})$ such that $\mathscr{E}(\*x^1(1))=0$. We may obtain an estimate like \eqref{eq: 5.28} and conclude.
	Now, the same arguments as in Part 2 of the proof of \Cref{thm: no.running} may be used to deduce that $\{\*x_\lambda(T)\}_{\lambda>0}$ is a bounded subset of $\R^d$, and hence there exists a sequence $\{\lambda_n\}_{n=1}^{\infty}$ of positive numbers with $\lambda_n\searrow0$ as $n\longrightarrow\infty$ and some $\*x_\circ \in \R^{d_x}$ such that 
	\begin{equation*}
	\*x_{\lambda_n}(T) \xrightarrow[n\longrightarrow\infty]{} \*x_\circ.
	\end{equation*}
	Using \eqref{eq: linear.lambda.proof} we deduce that $\mathscr{E}(\*x_\circ)=0$.
	Finally, the proof of the last fact is identical to that done for \Cref{thm: no.running}, so we omit it.
	\end{proof}

	\subsection{Proof of \Cref{thm: thm.classification.lambda}}
	
	We now provide a proof of our main result in the context of classification tasks.
		
	\begin{proof}[Proof of \Cref{thm: thm.classification.lambda}]
	
	We recall that, by assumption, $\*x_i^0:=\mathfrak{Q}\vec{x}_i\geqslant0$ for $i\in[N]$.
	Now let $\left[\widehat{w},\widehat{b}\right]\in H^k(0,T_0;\R^{d_u})$ be a pair of parameters which separates the training dataset $\left\{\*x_i^0,\vec{y}_i\right\}_{i\in[N]}$ with respect to $P$ in time $T_0>0$, i.e., such that the solution $\widehat{\*x} =[\widehat{\*x}_1, \ldots, \widehat{\*x}_N]$ to \eqref{eq: sigma.outside.i}, with initial condition $\*x^0=[\*x_1^0,\ldots,\*x_N^0]$, corresponding to $\left[\widehat{w},\widehat{b}\right]$, satisfies
	\begin{equation}\label{sparated.gamma}
	\min _{i\in [N]} \left\{P\,\widehat{\*x}_i (T_0)_{\vec{y}_i} - \max_{\substack{j\in[N]\\ j\neq \vec{y}_i}} P\,\widehat{\*x}_i(T_0)_j \right\}=:\gamma >0.
	\end{equation}
	Now fix an arbitrary $\alpha \in \left(0,\frac{1}{2}\right)$, and, for any $T>0$, define
	\begin{equation*}
	\left[w^\dagger(t), b^\dagger(t)\right]:=
	\begin{dcases}
	\dfrac{2T_0}{T}	\left[\widehat{w}\left(t \dfrac{2T_0}{T}  \right), \widehat{b}\left(t \dfrac{2T_0}{T}  \right)\right] & \text{for} \ t\in \left[ 0,\dfrac{T}{2}\right] \\
	T^{\alpha-1}  \left[\text{Id}_d,0_d \right] &\text{for} \ t\in \left(\dfrac{T}{2} ,T\right],
	\end{dcases} 
	\end{equation*}
	where $\text{Id}_d$ is the identity matrix in $\R^{d\times d}$ and $0_d$ is the zero vector in $\R^d$.
	By virtue of the scaling in \Cref{lem: scaling}, for $t\in \left[\frac{T}{2},T\right]$, the trajectories $\*x^\dagger = \left[\*x^\dagger_1, \ldots, \*x^\dagger_N\right]$ associated to $\left[w^\dagger, b^\dagger\right]$ are given by the solution to
	\begin{equation}\label{dyn.with.ident}
	\begin{dcases}
	\dot{\*x}^\dagger_i (t) = 
	\sigma\left(T^{\alpha-1} \*x^\dagger_i(t)\right) &\text{ for } t\in \left[ \dfrac{T}{2},T\right] \\
	\*x^\dagger_i \left( \dfrac{T}{2} \right) = \widehat{\*x}_i \left(T_0\right) .
	\end{dcases}
	\end{equation}
	Moreover, since $\sigma(x) = \max\{x,0\}\geqslant0$, the right hand side in \eqref{eq: sigma.outside.i} is nonnegative. 
	Using the assumption that the initial conditions are of the form $\*x^0_i = \mathfrak{Q}\vec{x}_i \geqslant 0$, it follows that $\widehat{\*x}_i(T_0)\geqslant 0$ for all $i\in [N]$. 
	We can therefore drop $\sigma$ in \eqref{dyn.with.ident} and deduce that 
	$P\*x^\dagger_i(t)$ solves
	\begin{equation}\label{dyn.with.ident}
	\begin{dcases}
	\frac{\diff}{\diff t}P\*x^\dagger_i(t) =T^{\alpha-1}P\*x^\dagger_i(t) &\text{ for } t\in \left[\dfrac{T}{2},T\right] \\
	P\*x^\dagger_i\left(\dfrac{T}{2} \right)= P\widehat{\*x}_i\left( T_0 \right).
	\end{dcases}
	\end{equation}
	Hence, we have
$$
P\*x^\dagger_i(t) = P\widehat{\*x}_i(T_0) e^{T^{\alpha-1}\left(t-\sfrac{T}{2}\right)}, \hspace{1cm} \text{ for all } t\in\left[\dfrac{T}{2} ,T\right].
$$	
Now, using the definition of the cross-entropy loss and the margin $\gamma$ in \eqref{sparated.gamma}, 
we compute, for any $i\in [N]$,
\begin{align*}
\loss\left(\*x^\dagger_i (T), \vec{y}_i\right) &= -\log \left(\dfrac{e^{P\widehat{\*x}_i(T_0)_{\vec{y}_i}e^{\frac{T^{\alpha}}{2}}} }{\sum_{j=1}^m e^{P\widehat{\*x}_i(T_0)_j e^{\frac{T^{\alpha}}{2}}}} \right) \\
&= \log \left( 1+ \sum_{j\neq \vec{y}_i} e^{\left(P\widehat{\*x}_i(T_0)_j e^{\frac{T^{\alpha}}{2}}\right)-\left(P\widehat{\*x}_i(T_0)_{\vec{y}_i}e^{\frac{T^{\alpha}}{2}}\right)} \right) \\
&\leqslant \log \left( 1+ (m-1) \exp \left(-\gamma\, \exp \left(\frac{T^{\alpha}}{2}\right)\right) \right).
\end{align*}
Then, we can estimate
\begin{equation}\label{error estimate log.exp.exp}
\mathscr{E}\left(\*x^\dagger (T)\right) \leqslant \log \left( 1+ (m-1) \exp \left(-\gamma\, \exp \left(\frac{T^{\alpha}}{2}\right)\right) \right).
\end{equation}
On the other hand, using the definition of $\left[w^\dagger, b^\dagger\right]$, we deduce 
\begin{align*}
\left\| \left[w^\dagger, b^\dagger\right] \right\|_{H^1 (0,T; \R^{d_u})}^2 &=
\left\| \left[w^\dagger, b^\dagger\right] \right\|_{H^1\left(0,\frac{T}{2}; \R^{d_u}\right)}^2 + \left\| \left[w^\dagger, b^\dagger\right]\right\|_{H^1\left(\frac{T}{2},T\right)}^2 \\
&\leqslant \dfrac{C_1}{T} + C_2\, T^{2(\alpha-1)} T,
\end{align*}
for some constants $C_1,C_2>0$ depending only on $\lambda, T_0$ and $\left[\widehat{w},\widehat{b}\right]$.
From this estimate, together with \eqref{error estimate log.exp.exp}, we obtain, for $T>T_0$,
$$
J_{\lambda,T}\left(w^\dagger, b^\dagger\right) \leqslant \log \left( 1+ (m-1) \exp \left(-\gamma\, \exp \left(\frac{T^{\alpha}}{2}\right)\right) \right) + C T^{2\alpha-1}, 
$$
for some constant $C>0$ depending on $\lambda$, $T_0$,  $\left[\widehat{w},\widehat{b}\right]$, but independent of $T$.
Using the above estimate, we may conclude from the optimality of $[w_T, b_T]$, as
\begin{align*}
\mathscr{E}\left(\*x_T(T)\right) \leqslant J_{\lambda,T}\left(w_T,b_T\right) &\leqslant 
J_{\lambda,T} \left(w^\dagger, b^\dagger\right) \\
&\leqslant \log \left( 1+ (m-1) \exp\left(-\gamma\, \exp\left(\frac{T^{\alpha}}{2}\right)\right) \right) + C T^{2\alpha-1}.
\end{align*}
	\end{proof}

	\subsection{Proof of \Cref{thm: turnpike.P}} \label{sec: proof.turnpike}
	
	The proof of \Cref{thm: turnpike.P} is rather involved -- we will first require the following lemmas. We will focus on System \eqref{eq: sigma.outside.i}, but with small adaptations the proofs (except the exponential stability estimate for the parameters) can be shown for systems of the form \eqref{eq: compound.neural.ode}. System \eqref{eq: sigma.inside.i} is addressed in \citep{esteve2020turnpike}.
	
	\begin{lemma}[Gr\"onwall-like estimate] 
	\label{lem: turnpike.1}
	Let $T>0$ be given, and let $\overline{\*x}\in\R^{d_x}$. For any $[w,b]\in L^1(0,T;\R^{d_u})$ and $\*x^0\in\R^{d_x}$, let $\*x\in C^0([0,T];\R^{d_x})$ denote the unique solution to \eqref{eq: sigma.outside.i}, noting \eqref{eq: form.controls}. The following facts then hold.
	\begin{enumerate}
	\item There exist a couple of constants $C_1=C_1\left(\sigma,\|\overline{\*x}\|,\max\left\{1,\left\|\*x^0-\overline{\*x}\right\|\right\}, N\right)>0$ and $C_2=C_2(\sigma, \|\overline{\*x}\|, N)>0$ independent of $T$ such that
	\begin{equation*}
	\|\*x(t)-\overline{\*x}\|\leqslant \mathfrak{C}\,\left(\left\|\*x^0-\overline{\*x}\right\|+\Big\|[w,b]\Big\|_{L^1(0,T;\R^{d_u})} + \|\*x-\overline{\*x}\|_{L^2(0,T;\R^{d_x})}\right)
	\end{equation*}
	holds for all $t\in[0,T]$, where 
	\begin{equation*}
	\mathfrak{C}:=C_1\max\left\{1, \Big\|[w,b]\Big\|_{L^1(0,T;\R^{d_u})}\right\}\exp\left(C_2\big\|w\big\|_{L^1(0,T;\R^{d\times d})}\right).
	\end{equation*}
	\item 
	If moreover $[w,b]\in L^2(0,T;\R^{d_u})$, then 
	\begin{equation*}
	\|\*x(t)-\overline{\*x}\|\leqslant \mathfrak{C}\,\left(\left\|\*x^0-\overline{\*x}\right\|+\Big\|[w,b]\Big\|_{L^2(0,T;\R^{d_u})} + \|\*x-\overline{\*x}\|_{L^2(0,T;\R^{d_x})}\right)
	\end{equation*}
	also holds for all $t\in[0,T]$, where
	\begin{equation*}
	\mathfrak{C}:=C_1\max\left\{1, \Big\|[w,b]\Big\|_{L^2(0,T;\R^{d_u})}\right\}\exp\left(C_2\big\|w\big\|_{L^2(0,T;\R^{d\times d})}\right).
	\end{equation*}
	\end{enumerate}
	\end{lemma}
	
	\noindent 
	The proof of \Cref{lem: turnpike.1} may be found in the Appendix.
	Now suppose $\*x^{\tau_\circ}\in\R^{d_x}$ is given. Let $T>0$ and $\tau_\circ\in[0,T)$ be fixed, and consider the cost functional
	\begin{equation} \label{eq: j.tau.T}
	J_{\tau_\circ,T}(w,b):=\frac{1}{N}\int_{\tau_\circ}^T\|\*x(t)-\overline{\*x}\|^2\diff t+\lambda\Big\|[w,b]\Big\|_{L^2(\tau_\circ,T;\R^{d_u})}^2,
	\end{equation}
	with $\*x(\cdot)$ being the solution to 
	\begin{equation}\label{eq: system.tau0}
	\begin{dcases}
	\dot{\*x}(t) = \mathfrak{f}\left([w(t),b(t)], \*x(t)\right) &\text{ in } (\tau_{\circ}, T)\\
	\*x(\tau_\circ)=\*x^{\tau_\circ},
	\end{dcases}
	\end{equation}
	with $\mathfrak{f}$ as the dynamics in \eqref{eq: sigma.inside.i} or \eqref{eq: sigma.outside.i}.
	We then have the following result.
	
	\begin{lemma}[Uniform estimate of optimal trajectories]
	\label{lem: turnpike.2}
	Fix $\lambda>0$, let $P \in \Lip(\R^d;\R^m)$ be any given surjective map, and let $\overline{\*x} \in \R^{d_x}$ with $\overline{\*x}_i \in P^{-1}(\{\vec{y}_i\})$ for $i\in[N]$ be arbitrary but fixed. Let $r>0$ be fixed.
	Suppose that \eqref{eq: sigma.inside.i} (resp. \eqref{eq: sigma.outside.i} with $\sigma$ $1$--homogeneous) is controllable with linear cost in the sense of \Cref{def: ctrl}. 
	Then, there exists a constant 
	$\mathfrak{C}=\mathfrak{C}\left(\sigma, \|\overline{\*x}\|, \lambda, N, d, r\right)>0$ such that for all $T>0$, $\tau_\circ\in[0,T)$ and $\*x^{\tau_\circ}\in\R^{d_x}$ such that
	\begin{equation*}
	\left\|\*x^{\tau_\circ}-\overline{\*x}\right\|\leqslant r,
	\end{equation*}
	any pair of parameters $\left[w_T, b_T\right]\in L^2(\tau_\circ,T; \R^{d_u})$ minimizing $J_{\tau_\circ,T}$ and corresponding solution $\*x_T(\cdot)$ to \eqref{eq: system.tau0}, with $\mathfrak{f}$ as in \eqref{eq: sigma.inside.i} (resp \eqref{eq: sigma.outside.i} with $\sigma$ $1$--homogeneous), are such that
	\begin{align*} 
	\Big\|\left[w_T,b_T\right]\Big\|_{L^2(\tau_\circ,T;\R^{d_u})} + \left\|\*x_T-\overline{\*x}\right\|_{L^2(\tau_\circ,T; \R^{d_x})} &+ \|\*x_T(t)-\overline{\*x}\|\leqslant \mathfrak{C}\,\left\|\*x^{\tau_\circ}-\overline{\*x}\right\|
	\end{align*}
	holds for all $t\in[\tau_\circ,T]$.	
	\end{lemma}
	
	\noindent 
	The proof of \Cref{lem: turnpike.2} follows precisely the arguments given in the first step of the proof of \Cref{thm: turnpike.BV}, in which one solely needs to change the initial time ($0$ by $\tau_\circ$) and datum ($\*x^0$ by $\*x^{\tau_\circ}$), the $\BV$--norms by $L^2$, and note the precise form of the constant $\mathfrak{C}$ in \eqref{eq: constant.desired}. We give a brief sketch in the appendix for the sake of clarity.
	
	We will also make use of the following short observation.
	
	\begin{lemma} \label{lem: doesnt.work.in.BV}
	Let $T>0$ and let $[w_T,b_T]\in L^2(0,T;\R^{d_u})$ be a pair of minimizers to $J_T$ defined in \eqref{eq: time.dep.func}, and denote by $\*x_T$ the corresponding solution to \eqref{eq: system.tau0} on $[0,T]$ with $\*x_T(0)=\*x^0$. Let $\tau_\circ\in[0,T)$ be given. Then $[w_T,b_T]|_{[\tau_\circ,T]}$ minimize $J_{\tau_\circ,T}$ defined in \eqref{eq: j.tau.T} for System \eqref{eq: system.tau0} with initial data $\*x^{\tau_\circ}=\*x_T(\tau_\circ)$.
	\end{lemma}
	
	\noindent
	We refer to the appendix for a proof.

	\begin{remark}
	We note that our proof is specific to $L^2$--regularization and does not transfer to $\BV$--regularization a priori, due to the fact that the $\BV$ norm may see the singularities in discontinuous parameters. Therein lies the main impediment to ensuring the validity of \Cref{thm: turnpike.P} for $\BV$--regularized problems by means of our strategy.
	\end{remark}
	
	\noindent	
	Before concluding this section with a proof of \Cref{thm: turnpike.P}, we also state the following key lemma.
		
	\begin{lemma} \label{lem: banach.lemma}
	Let $X$ be a Banach space, $T>a\geqslant0$ and $f\in C^0([a,T];X)$. For any $\tau\leqslant T-a$, there exists $t_1\in[a,a+\tau)$ such that
	\begin{equation*}
	\left\|f\left(t_1\right)\right\|_X\leqslant\frac{\|f\|_{L^2(a,T;X)}}{\sqrt{\tau}}.
	\end{equation*}
	\end{lemma}
	
	\noindent
	The proof may be found in the appendix.
	We may now provide the proof of \Cref{thm: turnpike.P}.

	\begin{proof}[Proof of \Cref{thm: turnpike.P}]
	
	We shall concentrate on the neural ODE \eqref{eq: sigma.outside.i}. 
	The proof of the full result for \eqref{eq: sigma.inside.i} is identical to that presented in \citep{esteve2020turnpike}. We split the proof in two parts.
	\smallskip
	
	\noindent
	\textbf{Part 1:} \emph{Stability estimates for $\mathscr{E}(\*x_T(t))+\|\*x_T(t)-\overline{\*x}\|$}. 
	For 
	\begin{equation*}
	r:=\left\|\*x^0-\overline{\*x}\right\|,
	\end{equation*}
	denote by $\mathfrak{C}_1=\mathfrak{C}_1(\sigma,\|\overline{\*x}\|,\lambda,N,d,r)>0$ the universal constant given by \Cref{lem: turnpike.2}. 
	Let 
	\begin{equation*}
	\tau>\max\left\{\mathfrak{C}_1^4, \mathfrak{C}_1^2\right\}
	\end{equation*}
	be fixed and let 
	\begin{equation*}
	T\geqslant \tau+1.
	\end{equation*}
	First, note that by \Cref{lem: turnpike.2} (with $\tau_\circ=0$, $r:=\left\|\*x^0-\overline{\*x}\right\|$ and $\*x^{\tau_\circ}=\*x^0$), we have
	\begin{equation} \label{eq: 7.20}
	\|\*x_T-\overline{\*x}\|_{L^2(0,T;\R^{d_x})}+\|\*x_T(t)-\overline{\*x}\|\leqslant\mathfrak{C}_1\,\left\|\*x^0-\overline{\*x}\right\| \hspace{1cm} \text{ for } t\in[0,T].
	\end{equation}
	Now note that for $t\in[0,\tau+1]$, the desired exponential stability estimate can easily be obtained since the length of the time interval is independent of $T$. Indeed, from \eqref{eq: 7.20} we get
	\begin{align*}
	\|\*x_T(t)-\overline{\*x}\|&\leqslant\mathfrak{C}_1\,\left\|\*x^0-\overline{\*x}\right\| \exp(t)\exp(-t)\\
	&\leqslant \mathfrak{C}_1\,\left\|\*x^0-\overline{\*x}\right\| \exp(\tau+1)\exp(-t),
	\end{align*}
	and, since $\overline{\*x}_i\in P^{-1}(\{\vec{y}_i\})$ for $i\in[N]$, using this estimate we also find
	\begin{align*}
	\mathscr{E}(\*x_T(t))\leqslant \frac{c}{N}\sum_{i=1}^N \big\|P\*x_{T,i}(t)-\vec{y}_i\big\|^\alpha&\leqslant \frac{c}{N}\|P\|^\alpha\sum_{i=1}^N\|\*x_{T,i}(t)-\overline{\*x}_i\|^\alpha\\ 
	&\lesssim \mathfrak{C}_1^\alpha \left\|\*x^0-\overline{\*x}\right\|^\alpha \exp(\alpha(\tau+1)) \exp(-\alpha t).
	\end{align*}
	Thus, only the case $t\in[\tau+1,T]$ remains. To do so, we proceed in three steps.
	\begin{enumerate}
	\item[Step 1).]\textbf{Preparation.} Since $\tau\leqslant T$, using \Cref{lem: banach.lemma} and then \Cref{lem: turnpike.2} (with $\tau_\circ=0$, $r:=\left\|\*x^0-\overline{\*x}\right\|$ and $\*x^{\tau_\circ}=\*x^0$) we see that there exists $\tau_\circ\in[0,\tau)$ such that
	\begin{equation} \label{eq: 7.8}
	\left\|\*x_T(\tau_\circ)-\overline{\*x}\right\|\leqslant \frac{\|\*x_T-\overline{\*x}\|_{L^2(0,T;\R^{d_x})}}{\sqrt{\tau}} \leqslant \frac{\mathfrak{C}_1}{\sqrt{\tau}}\left\|\*x^{0}-\overline{\*x}\right\|.
	\end{equation}
	By \Cref{lem: doesnt.work.in.BV}, the parameters $[w_T,b_T]|_{[\tau_\circ,T]}$ minimize the functional $J_{\tau_\circ,T}$ for System \eqref{eq: system.tau0} with initial data $\*x^{\tau_\circ}=\*x_T(\tau_\circ)$, to which the solution is precisely $\*x_T|_{[\tau_\circ,T]}$. 
	Now, applying \Cref{lem: turnpike.2} (this time with with $\tau_\circ$ as in \eqref{eq: 7.8}, $\*x^{\tau_\circ}=\*x_T(\tau_\circ)$ and with $r=\left\|\*x^0-\overline{\*x}\right\|$, as $\sfrac{\mathfrak{C}_1}{\sqrt{\tau}}<1$) in combination with \eqref{eq: 7.8}, yields
	\begin{equation} \label{eq: bootstrap}
	\|\*x_T(t)-\overline{\*x}\|\leqslant\mathfrak{C}_1\,\left\|\*x_T(\tau_\circ)-\overline{\*x}\right\| \leqslant \frac{\mathfrak{C}_1^2}{\sqrt{\tau}}\left\|\*x^{0}-\overline{\*x}\right\|,
	\end{equation}
	which holds for all $t\in[\tau_\circ,T]$. Since $\tau_\circ<\tau$, \eqref{eq: bootstrap} also holds for $t\in[\tau,T]$.
	\smallskip
	\item[Step 2).]\textbf{Bootstrap.}
	We iterate \eqref{eq: bootstrap} and show that for any $n\in\N$ satisfying $n\leqslant\sfrac{T}{\tau}$, the following estimate holds: 
	\begin{equation} \label{eq: induction}
	\|\*x_T(t)-\overline{\*x}\|\leqslant \left(\frac{\mathfrak{C}_1^2}{\sqrt{\tau}}\right)^n\left\|\*x^{0}-\overline{\*x}\right\| \hspace{1cm} \text{ for } t\in[n\tau,T].
	\end{equation} 
	We proceed by induction -- the case $n=1$ holds by \eqref{eq: bootstrap}. Thus suppose that \eqref{eq: induction} holds at some stage $n\geqslant2$ and suppose that $n+1\leqslant\sfrac{T}{\tau}$.
	Now the parameters $[w_T,b_T]|_{[n\tau,T]}$ minimize $J_{n\tau,T}$ by \Cref{lem: doesnt.work.in.BV}, and so we can apply \Cref{lem: turnpike.2} (with $\tau_\circ=n\tau$, $\*x^{\tau_\circ}=\*x_T(n\tau)$, and with $r=\left\|\*x^0-\overline{\*x}\right\|$ as $\sfrac{\mathfrak{C}_1^2}{\sqrt{\tau}}<1$ and \eqref{eq: induction} is assumed to hold) in combination with \Cref{lem: banach.lemma} (as $n+1\leqslant\sfrac{T}{\tau}$ clearly implies that $\tau\leqslant T-n\tau$) to deduce that there exists some time $t_1\in[n\tau,(n+1)\tau)$ such that
	\begin{equation*}
	\|\*x_T(t_1)-\overline{\*x}\|\leqslant\frac{\|\*x_T-\overline{\*x}\|_{L^2(n\tau,T;\R^{d_x})}}{\sqrt{\tau}}\leqslant \frac{\mathfrak{C}_1}{\sqrt{\tau}}\|\*x_T(n\tau)-\overline{\*x}\|.
	\end{equation*}
	We once again apply \eqref{eq: induction} in the inequality just above to deduce
	\begin{equation} \label{eq: 7.12}
	\|\*x_T(t_1)-\overline{\*x}\|\leqslant \frac{\mathfrak{C}_1}{\sqrt{\tau}} \left(\frac{\mathfrak{C}_1^2}{\sqrt{\tau}}\right)^n\left\|\*x^0-\overline{\*x}\right\|.
	\end{equation}
	We then apply \Cref{lem: turnpike.2} for a second time (with $\tau_\circ=t_1$, $\*x^{\tau_\circ}=\*x_T(t_1)$ and with $r=\left\|\*x^0-\overline{\*x}\right\|$, as we have \eqref{eq: 7.12} with $\sfrac{\max\{\mathfrak{C}_1, \mathfrak{C}_1^2\}}{\sqrt{\tau}}<1$) and use \eqref{eq: 7.12} to find
	\begin{equation*}
	\|\*x_T(t)-\overline{\*x}\|\leqslant \mathfrak{C}_1 \|\*x_T(t_1)-\overline{\*x}\|\leqslant \left(\frac{\mathfrak{C}_1^2}{\sqrt{\tau}}\right)^{n+1}\left\|\*x^0-\overline{\*x}\right\|,
	\end{equation*}
	which holds for all $t\in[t_1,T]$. Clearly, as $t_1<(n+1)\tau$, the above estimate also holds for all $t\in[(n+1)\tau,T]$. This completes the proof of \eqref{eq: induction}.
	\smallskip
	\item[Step 3).]\textbf{Conclusion.} Suppose $t\in[\tau+1,T]$ is arbitrary and fixed. Set $n(t):=\left\lfloor\frac{t}{\tau+1}\right\rfloor$; note that $n(t)\geqslant1$, $t\geqslant n(t)\tau$ and $n(t)\leqslant\sfrac{T}{\tau}$. We may then apply \eqref{eq: induction} to find that
	\begin{equation*}
	\|\*x_T(t)-\overline{\*x}\|\leqslant \left(\frac{\mathfrak{C}_1^2}{\sqrt{\tau}}\right)^{n(t)}\left\|\*x^{0}-\overline{\*x}\right\|.
	\end{equation*}
	Since $\tau>\mathfrak{C}_1^4$ and $n(t)\geqslant\frac{t}{\tau+1}-1$, from the above inequality we infer
	\begin{align*}
	\|\*x_T(t)-\overline{\*x}\|&\leqslant\exp\left(-n(t)\log\left(\frac{\sqrt{\tau}}{\mathfrak{C}_1^2}\right)\right)\left\|\*x^{0}-\overline{\*x}\right\|\\
	&\leqslant\frac{\sqrt{\tau}}{\mathfrak{C}_1^2}
	\exp\left(-\frac{\log\left(\frac{\sqrt{\tau}}{\mathfrak{C}_1^2}\right)}{\tau+1}t\right)\left\|\*x^{0}-\overline{\*x}\right\|.
	\end{align*}
	The desired exponential stability estimate for $\|\*x_T(t)-\overline{\*x}\|$ thus also holds for $t\in[\tau+1,T]$, with $\mu:=\frac{\log\left(\frac{\sqrt{\tau}}{\mathfrak{C}_1^2}\right)}{\tau+1}>0$ and $\mathfrak{C}:=\frac{\sqrt{\tau}}{\mathfrak{C}_1^2}\left\|\*x^{0}-\overline{\*x}\right\|$. Note that, arguing as in the previous case, 
	\begin{equation*}
	\mathscr{E}(\*x_T(t))\lesssim \frac{1}{N}\sum_{i=1}^N \big\|P\*x_{T,i}(t)-\vec{y}_i\big\|^\alpha\lesssim \mathfrak{C}^\alpha \exp(-\alpha\mu t).
	\end{equation*}
	\end{enumerate}
	This concludes the proof of the first part.
	\smallskip
	
	\noindent
	\textbf{Part 2:} \emph{Stability estimate for the parameters.}
	The stability estimate for the parameters in the setting of \eqref{eq: sigma.outside.i} closely follows the proof presented in \citep{esteve2020turnpike} -- we give a sketch of the main ideas. We henceforth interchange between the notation $u:=[w,b]$ and $[w,b]$ to ease the reading.
	
	Fix an arbitrary $t\in[0,T)$ and $0<h\ll1$, so that $t+2h^2+2h\in[0,T]$, and set
	\begin{equation*}
	u^\aux(s):=\begin{dcases}
	u_T(s) &\text{ for } s\in[0,t]\\
	\frac12u_T\left(t+\frac{s-t}{2}\right) &\text{ for } s\in(t,t+2h^2]\\
	\frac{h+2}{2}u_T\left(\left(\frac{h+2}{2}\right)s-\frac{h+2}{2}(t+2h^2)+t+h^2\right) &\text{ for } s\in(t+2h^2,\\
	&\hspace{1.5cm}t+2h^2+2h]\\
	u_T(s) &\text{ for } s\in(t+2h^2+2h,\\
	&\hspace{3.25cm}T].
	\end{dcases}
	\end{equation*}
	From \Cref{lem: scaling} that the solution $\*x^\aux$ to \eqref{eq: sigma.outside.i} associated to the above pair is precisely
	\begin{equation*}
	\*x^\aux(s):=\begin{dcases}
	\*x_T(s) &\text{ for } s\in[0,t]\\
	\*x_T\left(t+\frac{s-t}{2}\right) &\text{ for } s\in(t,t+2h^2]\\
	\*x_T\left(\left(\frac{h+2}{2}\right)s-\frac{h+2}{2}(t+2h^2)+t+h^2\right) &\text{ for } s\in(t+2h^2,\\
	&\hspace{1.5cm}t+2h^2+2h]\\
	\*x_T(s) &\text{ for } s\in(t+2h^2+2h,T].
	\end{dcases}
	\end{equation*}
	This specific construction is in particular done as to ensure that $\*x^\aux(T)=\*x_T(T)$ and so $\mathscr{E}(\*x^\aux(T))=\mathscr{E}(\*x_T(T))$. Now by several straightforward computations (which may be found in \citep{esteve2020turnpike}) we deduce that
	\begin{align*}
	J_T(u^\aux) &= \mathscr{E}(\*x_T(T)) + \frac{1}{N}\int_0^T\|\*x_T(s)-\overline{\*x}\|^2\diff s + \frac{1}{N}\int_t^{t+h^2}\|\*x_T(s)-\overline{\*x}\|^2\diff s \\
	&\qquad + \frac{1}{N}\left(\frac{2}{h+2}-1\right)\int_{t+h^2}^{t+2h^2+2h} \|\*x_T(s)-\overline{\*x}\|^2\diff s\\
	&\qquad+ \lambda \int_0^T \|u_T(s)\|^2\diff s - \frac{\lambda}{2}\int_t^{t+h} \|u_T(s)\|^2\diff s + \lambda \frac{h}{2} \int_{t+h^2}^{t+h^2+2h} \|u_T(s)\|^2\diff s\\
	&\leqslant \mathscr{E}(\*x_T(T)) + \frac{1}{N}\int_0^T \|\*x_T(s)-\overline{\*x}\|^2\diff s + \frac{1}{N}\int_t^{t+h} \|\*x_T(s)-\overline{\*x}\|^2 \diff s \\
	&\qquad+ \lambda \int_0^T \|u_T(s)\|^2\diff s - \frac{\lambda}{2}\int_t^{t+h} \|u_T(s)\|^2\diff s + \lambda \frac{h}{2} \int_{t+h^2}^{t+h^2+2h} \|u_T(s)\|^2\diff s.
	\end{align*}
	The above identity combined with the optimality inequality $J_T(u_T)\leqslant J_T(u^\aux)$ leads us to 
	\begin{equation*}
	\frac{\lambda}{2} \int_{t}^{t+h} \|u_T(s)\|^2\diff s \leqslant \frac{1}{N}\int_{t}^{t+h} \|\*x_T(s)-\overline{\*x}\|^2 \diff s + \lambda \frac{h}{2}  \int_{t+h^2}^{t+h^2+2h} \|u_T(s)\|^2\diff s.
	\end{equation*}
	Using the exponential stability estimate for $\|\*x_T(\cdot)-\overline{\*x}\|$, we may find 
	\begin{align*}
	\frac{\lambda}{2}\frac{1}{h} \int_{t}^{t+h} \|u_T(s)\|^2\diff s &\leqslant \frac{1}{N}\frac{1}{h}\int_{t}^{t+h} \|\*x_T(s)-\overline{\*x}\|^2 \diff s + \frac{\lambda}{2}  \int_{t+h^2}^{t+h^2+2h} \|u_T(s)\|^2\diff s \\
	&\leqslant \frac{\mathfrak{C}}{N}\frac{1}{h} \int_t^{t+h} e^{-2\mu s}\diff s + \frac{\lambda}{2}  \int_{t+h^2}^{t+h^2+2h} \|u_T(s)\|^2\diff s\\
	&\leqslant \frac{\mathfrak{C}}{N} e^{-2\mu t} + \frac{\lambda}{2}  \int_{t+h^2}^{t+h^2+2h} \|u_T(s)\|^2\diff s.
	\end{align*}
	By using the Lebesgue dominated convergence theorem (applied to the integrable function $s\mapsto\|u_T(s)\|^2\one_{(t+h^2,t+h^2+2h)}(s)$) the second integral in the estimate just above goes to $0$ when $h\searrow0$. Hence, by applying the Lebesgue differentiation theorem in the estimate just above, we deduce that
	\begin{equation*}
	\Big\|[w_T(t),b_T(t)]\Big\|^2=\|u_T(t)\|^2 = \lim_{h\searrow0}\,\frac{1}{h}\int_t^{t+h} \|u_T(s)\|^2\diff s \leqslant \frac{\mathfrak{C}}{2N\lambda} e^{-2\mu t}
	\end{equation*}
	for a.e. $t\in[0,T]$, as desired.
	\end{proof}
	
	\subsection{Proof of \Cref{thm: turnpike.BV}}
	
	\begin{proof}[Proof of \Cref{thm: turnpike.BV}]
	We will interchange the notations $[w,b]$ and $u:=[w,b]$ for simplicity, and we split the proof in two parts. We henceforth set $r:=\left\|\*x^0-\overline{\*x}\right\|$.
	\smallskip
	
	\noindent
	\textbf{Part 1:} \emph{Uniform estimates}. We shall first establish uniform-in-$T$ estimates -- we find some $\mathfrak{C}>0$ independent of $T$ such that
	\begin{equation} \label{eq: bv.est}
	\left\|\*x_T(t)-\overline{\*x}\right\| + \left\|\*x_T-\overline{\*x}\right\|_{L^2(0,T;\R^{d_x})} + \Big\|[w_T,b_T]\Big\|_{\BV(0,T;\R^{d_u})} \leqslant \mathfrak{C}
	\end{equation} 
	whenever $T\geqslant1$ and for $t\in[0,T]$. By virtue of the controllability assumption, there exist parameters $u^\dagger\in C^0([0,1];\R^{d_u})\cap\BV([0,1];\R^{d_u})$ such that the corresponding solution $\*x^\dagger(\cdot)$ to \eqref{eq: sigma.outside.i} on $(0,1)$ satisfies $\*x^\dagger(1)=\overline{\*x}$. 
	By integrating the equation satisfied by $\*x^\dagger(\cdot)$, we see that for $t\in[0,1]$, 
	\begin{align*}
	\left\|\*x^\dagger(t)-\overline{\*x}\right\|\leqslant\left\|\*x^{0}-\overline{\*x}\right\|+c(\sigma)\Big(&\int_{0}^t\left\|\*w^\dagger(s)\right\|\left\|\*x^\dagger(s)-\overline{\*x}\right\|\diff s\\
	&+\|\overline{\*x}\|\int_{0}^t \left\|\*w^\dagger(s)\right\|\diff s+ \int_{0}^t\left\|\*b^\dagger(s)\right\|\diff s\Big)
	\end{align*}
	where $c(\sigma)>0$ is the Lipschitz constant of $\sigma$. By virtue of Gr\"onwall's inequality and \eqref{eq: linear.control.cost.2}, we deduce that
	\begin{equation} \label{eq: B.3}
	N^{-\sfrac{1}{2}}\left\|\*x^\dagger(t)-\overline{\*x}\right\|\leqslant C_1\left\|\*x^{0}-\overline{\*x}\right\|\exp\Big(C_1\left\|\*x^{0}-\overline{\*x}\right\|\Big)
	\end{equation}
	for $t\in[0,1]$ and for some constant $C_1=C_1(\sigma,\|\overline{\*x}\|,r)>0$ independent of $T$ (as well as and $N$, and only depends on $\*x^0$ via $r$).	
	We now set (recall that $T\geqslant1$)
	\begin{equation*}
	u^\aux(t):=\begin{dcases}u^\dagger(t) &\text{ in } (0,1)\\
	0 &\text{ in } (1, T),
	\end{dcases}
	\end{equation*}
	and we denote by $\*x^\aux(\cdot)$ the corresponding solution to \eqref{eq: sigma.outside.i}. One notes that $u^\aux\in L^2(0,T;\R^{d_u})\cap\BV(0,T;\R^{d_u})$, as the jump at $t=1$ only accounts to a Dirac mass. 
	Moreover, $\*x^\aux(t)=\overline{\*x}$ for $t\in[1,T]$ and thus also $P\*x^\aux_i(T)=\vec{y}_i$ for $i\in[N]$, which implies $\mathscr{E}(\*x^\aux(T))=0$. 
		By using the optimality inequality $J_{T}\left(u_T\right)\leqslant J_{T}\left(u^\aux\right)$ and the fact that $\mathscr{E}(\*x^\aux(T))=0$ and $\mathscr{E}\geqslant0$, we thus deduce that
		\begin{align} \label{eq: B.4}
	&N^{-1}\Big\|\*x_T-\overline{\*x}\Big\|^2_{L^2(0,T;\R^{d_x})}+\lambda\Big\|u_T\Big\|_{\BV(0,T;\R^{d_u})}^2\nonumber\\
	\leqslant\, &N^{-1}\Big\|\*x^\aux-\overline{\*x}\Big\|^2_{L^2(0,T;\R^{d_x})}+\lambda\Big\|u^\aux\Big\|_{\BV(0,T;\R^{d_u})}^2\nonumber\\
	\leqslant\, &N^{-1}\Big\|\*x^\dagger-\overline{\*x}\Big\|^2_{L^2(0,1;\R^{d_x})}+2\lambda\left\|u^\dagger\right\|_{\BV(0,1;\R^{d_u})}^2+2\lambda(d^2+d)\sum_{j=1}^{d^2+d}\left|u^\dagger_j(1)\right|^2,
	\end{align}
	where we used the vectorized form of $u^\dagger$ in the second component of the $\BV$--norm whilst maintaining the same notation (recall that $d_u:=d\times(d+1)$).
	Using \eqref{eq: B.3} and \eqref{eq: linear.control.cost.2}, we find
	\begin{align} \label{eq: B.5}
	N^{-1}\Big\|\*x^\dagger-\overline{\*x}\Big\|^2_{L^2(0,1;\R^{d_x})}+&2\lambda\left\|u^\dagger\right\|_{\BV(0,1;\R^{d_u})}^2+2\lambda(d^2+d)\sum_{j=1}^{d^2+d}\left|u^\dagger_j(1)\right|^2\nonumber\\
	&\hspace{1.5cm}\leqslant \underbrace{C_2 \left\|\*x^{0}-\overline{\*x}\right\|^2\exp\Big(2C_1\left\|\*x^0-\overline{\*x}\right\|\Big)}_{:=\mathfrak{C}_0^2}
	\end{align}
	for some constant $C_2=C_2(\sigma,\|\overline{\*x}\|,r, d,\max\{1,\lambda\})>0$.
	 Combining \eqref{eq: B.4} and \eqref{eq: B.5}, and recalling \Cref{lem: turnpike.1}, we may conclude that \eqref{eq: bv.est} holds with
 	\begin{equation} \label{eq: constant.desired}
  	\mathfrak{C}:= 
	C_3\max\left\{1,\frac{\mathfrak{C}_0}{\lambda}\right\}\exp\left(C_4\frac{\mathfrak{C}_0}{\lambda}\right) \left\|\*x^0-\overline{\*x}\right\|
	\end{equation}
	for some constants $C_3=C_3(\sigma, \|\overline{\*x}\|,r,N,\lambda)>0$ and $C_4=C_4(\sigma,\|\overline{\*x}\|,N)>0$. 
	\smallskip
	
	\noindent \textbf{Part 2:} \emph{Conclusion.} We note that the desired stability estimates thus follow	 from \eqref{eq: bv.est}, and, since  $\overline{\*x}_i\in P^{-1}(\{\vec{y}_i\})$ for $i\in[N]$, we also have
	\begin{equation*}
	\mathscr{E}(\*x_T(t))\leqslant N^{-1}\sum_{i=1}^N \big\|P\*x_{T,i}(t)-\vec{y}_i\big\|^\alpha\leqslant N^{-1}\|\*x_T(t)-\overline{\*x}\|^\alpha.
	\end{equation*}
	We conclude the proof by noting that the convergence of averages follows by dividing both sides in \eqref{eq: bv.est} by $T$, using the estimate just above for the training error term, and letting $T\longrightarrow\infty$.
	\end{proof}
	
	\subsection{Proof of \Cref{prop_lower_bound}}
	
	The proof of \Cref{prop_lower_bound} is a straightforward Gr\"onwall argument. We sketch it for completeness.
	
	\begin{proof}[Proof of \Cref{prop_lower_bound}]
	    For simplicity of presentation but without any loss of generality, we will henceforth concentrate on system \eqref{eq: sigma.inside.i}.
	For any $t\in [0,T]$, $i\in [N]$ and $j\in[N]$, we have
	\begin{equation*}
	    \*x_i(t)-\*x_j(t) = \*x_{i}^0-\*x_{j}^0+\int_0^t w(\tau) \Big(\sigma(\*x_i(\tau))-\sigma(\*x_j(\tau))\Big) \diff \tau.
	\end{equation*}
	Using the Lipschitz character of $\sigma$, we get
	\begin{align*} 
        \left\|\*x_i(t)-\*x_j(t)\right\|
        &\leqslant\left\|\*x_{i}^0-\*x_{j}^0\right\|+\int_0^t \left\|w(\tau)\right\| \left\|\sigma\left(\*x_i(\tau)\right)-\sigma\left(\*x_j(\tau)\right)\right\| \diff \tau \nonumber\\
        &\leqslant\left\|\*x_{i}^0-\*x_{j}^0\right\|+C(\sigma) \int_0^t \left\|w(\tau)\right\| \left\|\*x_i(\tau)-\*x_j(\tau)\right\| \diff \tau.
    \end{align*}
    We apply the Gr\"onwall inequality with the effect of
    \begin{equation*}
        \left\|\*x_i(t)-\*x_j(t)\right\|\leqslant \exp\left(C(\sigma) \int_0^t \left\|w(\tau)\right\| \diff \tau \right)\left\|\*x_{i}^0-\*x_{j}^0\right\|.
    \end{equation*}
    We evaluate the above expression at final time $t=T$ to obtain
    \begin{equation*}
        \left\|\*x_{i}^1-\*x_{j}^1\right\|\leqslant \exp\left(C(\sigma) \int_0^T \left\|w(\tau)\right\|\diff \tau\right)\left\|\*x_{i}^0-\*x_{j}^0\right\|,
    \end{equation*}
    for some $\*x_{i}^1\in P^{-1}\left(\{\vec{y}_i\}\right)$ and $\*x_{j}^1\in P^{-1}\left(\{\vec{y}_j\}\right)$, whence
    \begin{equation*}
        \exp\left(C(\sigma) \int_0^T \left\|w(\tau)\right\| \diff \tau \right)\geqslant \frac{\left\|\*x_{i}^1-\*x_{j}^1\right\|}{\left\|\*x_{i}^0-\*x_{j}^0\right\|}.
    \end{equation*}
    By taking the $\log$ on both sides we obtain \eqref{controllability_L1_lower_bound}.
	\end{proof}	
	
	\subsection{Proof of \Cref{prop_Nleqd+1}}
	
	The following short functional analysis lemma will be of use in the proof of \Cref{prop_Nleqd+1}. We omit the proof, which follows by using the open mapping theorem (see e.g. \cite[Theorem 2.6, pp. 35]{brezis2010functional}).
	
	\begin{lemma} \label{lemma_invmod}
        Let $\mathscr{H}_1$ and $\mathscr{H}_2$ be two real Hilbert spaces. 
        Let
        \begin{equation*}
            \Lambda: \mathscr{H}_1\longrightarrow \mathscr{H}_2
        \end{equation*}
        be a linear, bounded and surjective operator. 
        Then
        \begin{align*}
            \Gamma: \mathscr{H}_2 &\longrightarrow \mathscr{H}_1 \\
            y &\longmapsto \argmin_{x\in \Lambda^{-1}(\{y\})}\left\|x\right\|_{\mathscr{H}_1}^2
        \end{align*}
        is linear and bounded.
    \end{lemma}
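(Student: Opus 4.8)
The plan is to identify $\Gamma$ with the Moore--Penrose pseudoinverse of $\mathscr{L}$ and to produce an explicit bounded formula for it. First I would check that $\Gamma$ is well defined. For every $y\in\mathscr{H}_2$ the fiber $\mathscr{L}^{-1}(\{y\})$ is nonempty because $\mathscr{L}$ is surjective, and it is a closed affine subspace of $\mathscr{H}_1$, being a translate of $\ker\mathscr{L}$ which is closed by continuity of $\mathscr{L}$. Applying the Hilbert projection theorem to this closed convex set and the origin shows that $\mathscr{L}^{-1}(\{y\})$ contains a unique point of minimal norm, and that this point is characterized as the unique element of $\mathscr{L}^{-1}(\{y\})$ orthogonal to the direction space, i.e. lying in $(\ker\mathscr{L})^{\perp}=\overline{\operatorname{ran}\mathscr{L}^{\ast}}$. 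Hence $\Gamma$ is a well-defined map, and it suffices to exhibit a \emph{bounded linear} right inverse of $\mathscr{L}$ whose range is contained in $(\ker\mathscr{L})^{\perp}$: by the above uniqueness it must then coincide with $\Gamma$.

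Next I would construct such a right inverse. Since $\mathscr{L}$ is bounded and surjective between Hilbert (hence Banach) spaces, the open mapping theorem yields a constant $c>0$ with $B_{\mathscr{H}_2}(0,c)\subseteq\mathscr{L}(B_{\mathscr{H}_1}(0,1))$, and the standard duality argument then gives the lower bound $\|\mathscr{L}^{\ast}y\|_{\mathscr{H}_1}\geq c\,\|y\|_{\mathscr{H}_2}$ for all $y\in\mathscr{H}_2$. Consequently the bounded self-adjoint operator $\mathscr{L}\mathscr{L}^{\ast}\colon\mathscr{H}_2\to\mathscr{H}_2$ is coercive, as $\langle\mathscr{L}\mathscr{L}^{\ast}y,y\rangle=\|\mathscr{L}^{\ast}y\|^{2}\geq c^{2}\|y\|^{2}$, and is therefore boundedly invertible (by Lax--Milgram, say). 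I would then set $\Gamma_0:=\mathscr{L}^{\ast}(\mathscr{L}\mathscr{L}^{\ast})^{-1}$, which is visibly linear and bounded; in fact $\|\Gamma_0 y\|^{2}=\langle y,(\mathscr{L}\mathscr{L}^{\ast})^{-1}y\rangle\leq c^{-2}\|y\|^{2}$, so $\|\Gamma_0\|\leq c^{-1}$.

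Finally I would verify $\Gamma_0=\Gamma$. On one hand $\mathscr{L}\Gamma_0 y=\mathscr{L}\mathscr{L}^{\ast}(\mathscr{L}\mathscr{L}^{\ast})^{-1}y=y$, so $\Gamma_0 y\in\mathscr{L}^{-1}(\{y\})$; on the other hand $\Gamma_0 y\in\operatorname{ran}\mathscr{L}^{\ast}\subseteq(\ker\mathscr{L})^{\perp}$. By the uniqueness observed in the first step, $\Gamma_0 y=\Gamma(y)$ for every $y\in\mathscr{H}_2$, so $\Gamma$ inherits the linearity and the bound $\|\Gamma\|\leq c^{-1}$ from $\Gamma_0$. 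The only genuinely nontrivial ingredient is the lower bound $\|\mathscr{L}^{\ast}y\|\geq c\|y\|$, equivalently the open mapping theorem applied to $\mathscr{L}$; this is also exactly where full surjectivity (rather than merely dense range) is used, since it is what furnishes a \emph{uniform} constant $c$. Everything else is routine Hilbert-space bookkeeping.
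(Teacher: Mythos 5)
Your argument is correct and rests on exactly the ingredient the paper invokes (the open mapping theorem, via the lower bound $\|\mathscr{L}^{\ast}y\|\geq c\|y\|$), so it matches the route the authors indicate; the paper itself only cites Brezis and gives no details. Your explicit identification $\Gamma=\mathscr{L}^{\ast}(\mathscr{L}\mathscr{L}^{\ast})^{-1}$, together with the uniqueness of the minimal-norm element in each fiber as the point of $\mathscr{L}^{-1}(\{y\})$ lying in $(\ker\mathscr{L})^{\perp}$, is a complete and clean instantiation of that sketch.
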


 \begin{proof}[Proof of \Cref{prop_Nleqd+1}]
      
        Inspired by the techniques in \citep{coron2004global, pighin2018controllability}, we define the continuous arc
        \begin{align*}
        \gamma: [0,1]&\longrightarrow \R^{d_x} \\
        s&\longmapsto (1-s)\*x^0+s\*x^1.
        \end{align*}
        By assumption,
        \begin{equation*}
	        \Big\{\sigma\left(\*x_{1}^1\right),\dots,\sigma\left(\*x_{i}^1\right),\dots,\sigma\left(\*x_{N}^1\right)\Big\}
	    \end{equation*}
	    is a linearly independent system of vectors in $\R^d$ for any $s \in [0, 1]$. 
	    Thus, by using the continuity of $\gamma$, there exists an $\eta>0$, such that whenever $\left\|\*x^{1}-\*x^{0}\right\|\leqslant \eta$,
        \begin{equation}\label{svectors_system_2}
	        \Big\{\sigma\left(\gamma_1(s)\right),\dots,\sigma\left(\gamma_i(s)\right),\dots,\sigma\left(\gamma_N(s)\right)\Big\}
	    \end{equation}
	    is also a system of linearly independent vectors in $\R^{d}$ for any $s\in [0,1]$. 
	    Following the framework of \Cref{lemma_invmod}, for any $s\in \left[0,1\right]$, define
	   \begin{align*}
	   \Lambda_s:\R^{d\times d}&\longrightarrow \R^{d_x}\\
	   w&\longmapsto w\sigma\left(\gamma\left(s\right)\right).
	   \end{align*}
	   By the linear independence of the system of vectors \eqref{svectors_system_2}, $\Lambda_s$ is surjective for any $s\in [0,1]$. Hence, using \Cref{lemma_invmod}, we see that
	    \begin{align*}
	    \Gamma_s: \R^{d_x}&\longrightarrow \R^{d\times d}\\
	    y&\longmapsto \argmin_{w\in \Lambda_s^{-1}(\{y\})}\left\|w\right\|,
	    \end{align*}
	    is a linear and bounded operator for any $s\in [0,1]$, and, since \eqref{svectors_system_2} is independent and the arc $\gamma$ is continuous, 
	    $\{\Gamma_s\}_{s\in[0,1]}$ is uniformly bounded in operator norm:
	    \begin{equation} \label{eq: uniform.bound.gammat}
	    \left\|\Gamma_s\right\|_{\mathscr{L}\left(\R^{d_x}; \R^{d\times d}\right)}\leqslant C
	    \end{equation}
	    for some $C>0$ independent of $T>0$.
       	Now, for $t\in \left[0,T\right]$, set
        \begin{equation}\label{def_wGammat}
	        w(t)\coloneqq \Gamma_{s_t}\left(\dfrac{\*x^{1}-\*x^{0}}{T}\right),
	    \end{equation}
	   with $s_t\coloneqq \frac{t}{T}$. Note that for any $t\in \left[0,T\right]$, the vector $w(t)\in \R^{d\times d}$ solves the linear system of equations
    	\begin{equation*}\label{eq: sigma.inside.i_linearsystem}
    	w(t)\sigma\left(\*x_i(t)\right) = \dot{\*x}_i(t) \hspace{0.25cm} \text{ for } i \in [N],
    	\end{equation*}
    	where
    	\begin{equation*}
	\*x(t)\coloneqq \gamma\left(\dfrac{t}{T}\right)=\left(1-\dfrac{t}{T}\right)\*x^0+\dfrac{t}{T}\*x^1.
	\end{equation*}
    	Hence, $\*x(t)$ solves
    	\begin{equation*} \label{eq: sigma.inside.i_Nleqd+1_2}
    	\begin{dcases}
    	\dot{\*x_i}(t) = \*w(t) \sigma(\*x_i(t)) &\text{ for } t \in \left(0,T\right) \\
    	\*x_i(0) = \gamma(0)=\*x_{i}^0 \\
	\*x_i\left(T\right) = \gamma(1)=\*x_{i}^1,
    	\end{dcases}
    	\end{equation*}
    	for any $i\in [N]$. This thus demonstrates the existence of a control $w$ steering the stacked dynamics from $\*x^0$ to $\*x^1$ in time $T$.
	\smallskip
	
	Let us conclude by showing that $w$ satisfies the stated estimate. By the definition of $w$ in \eqref{def_wGammat} as well as \eqref{eq: uniform.bound.gammat}, for any $t\in \left[0,T\right]$ we have
    	\begin{equation*}
    	    \left\|w(t)\right\|=\left\|\Gamma_t\left(\dfrac{\*x^{1}-\*x^{0}}{T}\right)\right\| \leqslant \dfrac{C}{T}\left\|\*x^{1}-\*x^{0}\right\|,
    	\end{equation*}
    	as desired.
    \end{proof}
    
    \begin{remark}[Removing the smallness assumption]
    One could perhaps adapt the argument in the proof of \Cref{prop_Nleqd+1} (given just below) to obtain a global result, assuming the existence of a continuous arc $\gamma$ linking $\*x^0$ and $\*x^1$ such that
            \begin{equation*}\label{svectors_system_3}
	        \Big\{\sigma\left(\gamma_1(s)\right),\dots,\sigma\left(\gamma_i(s)\right),\dots,\sigma\left(\gamma_N(s)\right)\Big\}
    	    \end{equation*}
    	    is a system of linearly independent vectors in $\R^d$ for any $s\in [0,1]$. 
	    Problems arise however whenever this condition is not satisfied. 
	    In any case, in view of the uniqueness results for ODEs and \Cref{prop_lower_bound}, we have to assume that $\*x_{i}^0\neq \*x_{j}^0$ and $\*x_{i}^1\neq \*x_{j}^1$, for $i\neq j$.
    \end{remark}

	\section{Concluding remarks}
	
\noindent 
In this work, we have addressed the impact of the final time horizon $T$ in general learning problems for neural ODEs. 
	
	\begin{itemize}
	\item In the empirical risk minimization problem with a Tikhonov ($L^2$ or $H^1$) parameter regularization, we concluded via \Cref{thm: no.running} -- \Cref{thm: thm.classification.lambda} that when $T$ is large enough, the obtained optimal/trained parameters for neural ODEs are such that the corresponding trajectories reach zero training error with a quantitative rate (thus, stipulate an approximation property of the trained model with respect to $T$), whilst doing so with the least oscillations possible.
	In the associated discrete-time, residual neural network setting, this result indicates that adding more layers before training would guarantee the optimal trajectories approach the zero training error regime, but do so without overfitting.
	In more practical terms, to ensure that the global minimizer is near zero training error, while training, one could systematically increase the time horizon $T$ whilst keeping the regularization parameter $\lambda>0$ fixed. Moreover, roughly the same conclusions hold when $T$ is fixed and $\lambda$ is rendered small, thereby linking our  insights with the literature on the regularization path limit for various machine learning models.
	\smallskip
	
	\item To obtain better quantitative estimates on the time horizon (and thus, number of layers when the time-step is fixed, e.g. in ResNets) required to be $\varepsilon$--close to the zero training error regime, for a given tolerance $\varepsilon>0$, we introduced a minimization problem wherein we added a tracking term which regularizes the state trajectories over the entire time horizon. 
	In \Cref{thm: turnpike.P}, we show that the training error and the optimal parameters are at most of the order $\mathcal{O}\left(e^{-\mu t}\right)$ for all $t\in[0,T]$.
	This result, along with numerical experiments, demonstrates a strong approximation rate of the trained neural ODE flow (which ought to be compared with universal approximation results, in which, a key caveat is that there is no scalable method to compute the theoretically guaranteed parameters), with parameters which are exponentially small, and could thus stipulate that the flow would tend to oscillate little.
	Moreover, the exponential decay estimate also ensures that $T$ need not be chosen too large to render the training error small.
	\end{itemize}
	
	\subsection{Outlook}
	
	We present a list of questions and topics which would be complementary to our work.
	\smallskip
	
	\noindent\textbf{Generalization bounds.} 
	To complement our analytical study on the long time horizon/large layer regime, it would be of interest to provide generalization error bounds for the limiting, least $L^2$--norm parameters in the interpolation regime obtained in \Cref{thm: no.running}, via, for instance, commonly used metrics such as the VC dimension \citep{vapnik2013} or Rademacher complexity \citep{bartlett2002}.
	Such studies are, to the best of our knowledge, not done in the context of models such as neural ODEs.
	\smallskip
	
	\noindent \textbf{Stability estimate for \eqref{eq: time.dep.func.wanted}.}
	We provided a proof of the exponential stability estimate of the training error and optimal parameters in the context of $\ell^2$--like losses, and without regularizing the output $P\*x_i(t)$ but rather the features $\*x_i(t)$ over all time/layer $t \in [0,T]$. 
	We could stipulate that, whenever $P$ is Lipschitz (and possibly real analytic) and such that the training error attains its minimum (e.g. when $P$ is a matrix, or a matrix composed with a sigmoid truncated by some cut-off function), the exponential stability result could hold by making use of the Łojasiewicz inequality: for a compact set $K\subset\R^d$ and two continuous, sub-analytic functions $\mathfrak{g}, \mathfrak{h}: K\longrightarrow\R$, if $\mathfrak{g}^{-1}(\{0\})\subset\mathfrak{h}^{-1}(\{0\})$ then
	\begin{equation*}
	\|\mathfrak{h}(x)\|^\alpha\leqslant c \|\mathfrak{g}(x)\| \hspace{1cm} \text{ for all } x\in K, 
	\end{equation*}
	holds for some $c>0$ and $\alpha\in\N$
	(see \citep{lojasiewicz1961probleme} and also \citep[Theorem 6.4]{bierstone1988semianalytic}). This however remains an open problem. 
	On the other hand, addressing analytically the (exponential) stability stipulated by the numerical experiments presented herein for non $\ell^2$--losses such as cross-entropy also remains open.
	\smallskip
	
	\noindent \textbf{A pre-training algorithm.}
	Note that the insight from \Cref{thm: turnpike.P} and the experiments performed for \eqref{eq: time.dep.func.wanted} could motivate the following algorithm, which trains a shallower network (i.e., a neural ODE with a shorter time horizon) at first, and then successively increases this time horizon in an iterative manner.
	\smallskip
	
		\begin{algorithm}[H]
\SetAlgoLined
\KwResult{$T$, $[w_T,b_T]$}
 fix $\varepsilon>0$, $T_\circ>0$\;
 $j\leftarrow1$\;
Find $[w_{jT_\circ}, b_{jT_\circ}]$ and $\*x_{jT_\circ}$ by solving
 \begin{equation} \label{eq: cond.1}
 \inf_{[w,b]\in H^k((j-1)T_\circ,jT_\circ;\R^{d_u})} \int_{(j-1)T_\circ}^{jT_\circ}\mathscr{E}(\*x(t))\diff t + \Big\|[w,b]\Big\|^2_{H^k((j-1)T_\circ, jT_\circ;\R^{d_u})}
 \end{equation}
subject to
\begin{equation} \label{eq: cond.2}
\begin{dcases}
\dot{\*x}(t) = \mathfrak{f}\Big([w(t),b(t)], \*x(t)\Big) &\text{ in } ((j-1)T_\circ,jT_\circ)\\
\*x((j-1)T_\circ)=\*x^{(j-1)T_\circ}.
\end{dcases}
\end{equation}
\BlankLine
$[w_T,b_T]_{|_{((j-1)T_\circ,jT_\circ)}}\leftarrow[w_{jT_\circ}, b_{jT_\circ}]$\;
$\*x^j \leftarrow \*x_{jT_\circ}(jT_\circ)$\;
$T\leftarrow jT_\circ$\;
 \While{$\mathscr{E}(\*x(jT_\circ))>\varepsilon$}{
  $j\leftarrow j+1$\;
  find $[w_{jT_\circ}, b_{jT_\circ}]$ and $\*x_{jT_\circ}$ by solving \eqref{eq: cond.1} -- \eqref{eq: cond.2}\;
$[w_T,b_T]_{|_{((j-1)T_\circ,jT_\circ)}}\leftarrow[w_{jT_\circ}, b_{jT_\circ}]$\;
$\*x^{jT_\circ} \leftarrow \*x_{jT_\circ}(jT_\circ)$\;
$T\leftarrow jT_\circ$
 }
 \caption{A pre-training algorithm.}
\end{algorithm}
\smallskip

\noindent	
One of the most distinguished characteristics of \eqref{eq: time.dep.func} is that the time-horizon $T$ needed to get $\varepsilon$--close to any given target is in fact implicitly defined in the cost functional. 
	At the level of ResNets, this means that the required number of layers needed to fit the data up to $\varepsilon$-error is given by the cost itself. 
	The goal of the above algorithm is to take advantage of this artifact, and represents a \emph{greedy algorithm} which uses only the number of layers strictly needed, thus avoiding unnecessary ones.
	We leave open the possible numerical analysis of the above algorithm (see \citep[Chapter 15]{goodfellow2016deep} for some related literature on pre-training algorithms).
	\smallskip
		
	\noindent \textbf{More general models.} It would be of interest to directly investigate the appearance of the phenomena presented herein to neural ODEs with state of the art configurations, including convolutional, batch-normalization and max-pooling layers -- we have solely focused our theoretical analysis to the basic settings. 
	Further extensions could include the study of ResNet variants such as MomentumNets \citep{sander2021momentum}, which are second order ODEs wherein $\dot{\*x}$ is seen as a damping term, and mean field variants \citep{weinan2019mean}. 
	
	\smallskip
	
	\noindent \textbf{Unsupervised learning.}
	As discussed in the introduction, the neural ODE representation of deep supervised learning, due to the invertibility of the neural ODE flow, has seen fruitful applications in generative modeling via continuous normalizing flows (see \citep{ruthotto2021introduction}). 
	In generative modeling and unsupervised learning, one aims to infer the probability distribution of the inputs $\{\vec{x}_i\}_{i\in[N]}$ rather than, for instance, draw a decision boundary as in classification tasks via supervised learning.
	It would be of interest, in view of the existing applications, to investigate the potential use of the results presented in this work to the context of unsupervised learning.
	\smallskip

    	\subsubsection*{Acknowledgments} B.G. acknowledges Daniel Tenbrinck and Lukas Pflug (FAU Erlangen-N\"urnberg) for discussions on the foundations of neural networks and non-local equations.
	
	\smallskip
	
	\noindent
	{\small{\textbf{Funding:}}}
		{\small{B.G. and E.Z. have received funding from the European Union's Horizon 2020 research and innovation programme under the Marie Sklodowska-Curie grant agreement No.765579-ConFlex.
		D.P., C.E. and E.Z. have received funding from the European Research Council (ERC) under the European Union’s Horizon 2020 research and innovation programme (grant agreement NO. 694126-DyCon).
The work of E. Z. has been supported by the Alexander von Humboldt-Professorship program, the Transregio 154 Project ‘‘Mathematical Modelling, Simulation and Optimization Using the Example of Gas Networks’’ of the German DFG, grant MTM2017-92996-C2-1-R COSNET of MINECO (Spain) and by the Air Force Office of Scientific Research (AFOSR) under Award NO. FA9550-18-1-0242.}}

	\appendix

	\section{Auxiliary proofs}
	
	\begin{proof}[Proof of \Cref{lem: compact.flow}]
	The proof is mostly identical in for all of the three items, the difference being an underlying compact embedding.
	\smallskip
	
	\noindent
	\textbf{Proof of \textit{(i)}}.
	Let $\left\{ [w_n, b_n]\right\}_{n=1}^{\infty} \subset L^2(0,T; \R^{d_u})$ be a bounded sequence in $L^2(0,T;\R^{d_u})$.
	By the Banach-Alaoglu theorem, there exists a pair $\left[w^\dagger,b^\dagger\right] \in L^2(0,T;\R^{d_u})$ such that, along some subsequence,
	\begin{align*}
	[w_n,b_n] \xrightharpoonup[n\longrightarrow\infty]{} \left[w^\dagger,b^\dagger\right] \hspace{1cm} &\text{ weakly in } L^2(0,T;\R^{d_u}).
	\end{align*}
	Of course, the same convergences thence hold for $\*w_n := \text{diag}_N(w_n)$ to $\*w^\dagger := \text{diag}_N(w^\dagger)$, as well as $\*b_n := [b_n, \ldots, b_n]$ to $\*b^\dagger:= [b^\dagger, \ldots, b^\dagger]$.
	Let $\*x^\dagger \in C^0([0,T];\R^{d_x})$ be the unique solution to \eqref{eq: sigma.inside.i} associated to $[w^\dagger,b^\dagger]$ and the initial datum $\*x^0$.
	Let us prove that 
	\begin{equation} \label{eq: A1}
	\*x_n \xrightarrow[n\longrightarrow\infty]{} \*x^\dagger \hspace{1cm} \text{ strongly in } C^0([0,T];\R^{d_x})
	\end{equation}
	along the aforementioned subsequence.
	Take an arbitrary $t\in [0,T]$. Note that
	\begin{align*}
	\*x_{n}(t) - \*x^\dagger(t) &= \int_0^t \Big[ \*w_n(\tau) \sigma(\*x_n(\tau))+\*b_n(\tau)\Big] \diff \tau - \int_0^t \left[\*w^\dagger(\tau) \sigma\left(\*x^\dagger(\tau)\right)+\*b^\dagger(\tau)\right]\diff \tau \\
	&= \int_0^t \Big[\*w_n(\tau)\sigma(\*x_n(\tau))-\*w_n(\tau)\sigma\left(\*x^\dagger(\tau)\right)\Big] \diff \tau \\
	&\quad+ \int_0^t \Big[\*w_n(\tau)\sigma\left(\*x^\dagger(\tau)\right)-\*w^\dagger(\tau)\sigma\left(\*x^\dagger(\tau)\right) \Big]\diff \tau\\
	&\quad + \int_0^t \Big[\*b_n(\tau)-\*b^\dagger(\tau) \Big]\diff \tau.
	\end{align*}
	Hence, using the fact that $\sigma$ is globally Lipschitz with constant $c(\sigma)>0$,
	\begin{align*}
		\left\|\*x_{n}(t)-\*x^\dagger(t)\right\| &\leqslant \int_0^t \left \|\*w_n(\tau)\right\| \left\|\sigma\Big(\*x_n(\tau)\Big)-\sigma\left(\*x^\dagger(\tau)\right)\right\| \diff \tau \\
		&\quad+ \left\|\int_0^t \sigma\left(\*x^\dagger(\tau)\right) \left[\*w_{n}(\tau) - \*w^\dagger(\tau)\right] \diff \tau\right\| \\
		&\quad + \left\| \int_0^t \Big[\*b_n(\tau)-\*b^\dagger(\tau) \Big]\diff \tau \right\|\\
		&\leqslant c(\sigma) \int_0^t \left \|\*w_n(\tau)\right\| \left\|\*x_n(\tau)-\*x^\dagger(\tau)\right\| \diff \tau +  c_n,
	\end{align*}
	with 
	\begin{equation*}
	c_n\coloneqq \left\|\int_0^t \sigma\left(\*x^\dagger(\tau)\right) \left[\*w_{n}(\tau) - \*w^\dagger(\tau)\right] \diff \tau\right\|+\left\| \int_0^t \Big[\*b_n(\tau)-\*b^\dagger(\tau) \Big]\diff \tau \right\|.
	\end{equation*}
	Using Gr\"onwall's inequality, Cauchy-Schwarz, and the boundedness of the $L^2$--norm of $\{\*w_n\}_{n=1}^{\infty}$ by some constant $M>0$ independent of $t$, we thence obtain
	\begin{align*}
		\left\|\*x_{n}(t)-\*x^\dagger(t)\right\| &\leqslant c_n\exp\left(c(\sigma)\int_0^t \, \left \|\*w_{n}(\tau) \right\| \diff \tau\right) \\
		&\leqslant c_n \exp\left(c(\sigma) \sqrt{T} \, \left \|\*w_{n} \right\|_{L^2(0,T; \R^{d\times d\times N})}\right) \\
		&\leqslant c_n \exp\left(c(\sigma) \sqrt{T} M\right).
	\end{align*}
	As $c_n\longrightarrow 0$ along any subsequence as $n\to\infty$ by virtue of the weak convergences of $\{\*w_n\}_{n=1}^{\infty}$ to $\*w^\dagger$ and $\{\*b_n\}_{n=1}^{\infty}$ to $\*b^\dagger$, we deduce \eqref{eq: A1}. Hence, $\Phi_T$ sends bounded sequences in $L^2(0,T; \R^{d_u})$ into strongly convergent (sub)sequences in $C^0([0,T]; \R^{d_x})$ and is thus compact.
	\smallskip
	
	\noindent
	\textbf{Proof of \textit{(ii)}.} Let $\left\{ [w_n, b_n]\right\}_{n=1}^{\infty} \subset L^2(0,T;\R^{d_u})\cap\BV(0,T;\R^{d_u})$ be a bounded sequence in $L^2(0,T;\R^{d_u})\cap\BV(0,T;\R^{d_u})$. By the compactness of the embedding 
	\begin{equation*}
	\BV(0,T;\R^{d_u})\hookrightarrow L^1(0,T;\R^{d_u})
	\end{equation*}
	(see \citep[Theorem 3.23]{ambrosio2000functions}), there exists a pair $\left[w^\dagger,b^\dagger\right]\in\BV(0,T;\R^{d_u})$ such that, along some subsequence,
	\begin{equation*}
	[w_n,b_n] \xrightarrow[n\longrightarrow\infty]{} \left[w^\dagger,b^\dagger\right] \hspace{1cm} \text{ strongly in } L^1(0,T;\R^{d_u}).
	\end{equation*}
	Of course, the same convergences thence hold for $\*w_n := \text{diag}_N(w_n)$ to $\*w^\dagger := \text{diag}_N(w^\dagger)$, as well as $\*b_n := [b_n, \ldots, b_n]$ to $\*b^\dagger:= [b^\dagger, \ldots, b^\dagger]$.
	Let $\*x^\dagger \in C^0([0,T];\R^{d_x})$ be the unique solution to \eqref{eq: sigma.outside.i} associated to $[w^\dagger,b^\dagger]$ and the initial datum $\*x^0$. Let us prove \eqref{eq: A1}. Arguing as above, we see that
	\begin{align*}
		\left\|\*x_{n}(t)-\*x^\dagger(t)\right\| &\leqslant c(\sigma)\int_0^t\left\|\*w_n(\tau)-\*w^\dagger(\tau)\right\|\left\|\*x_n(\tau)\right\| \diff \tau \\
		&+c(\sigma)\int_0^t \left\|\*w^\dagger(\tau)\right\|\left\|\*x_n(\tau)-\*x^\dagger(\tau)\right\|\diff \tau \\
		&+c(\sigma)\int_0^t\left\|\*b_n(\tau)-\*b^\dagger(\tau)\right\|\diff\tau,
	\end{align*}
	where $c(\sigma)>0$ is the Lipschitz constant of $\sigma$,
	and thus, using Gr\"onwall's inequality, we obtain
	\begin{align*}
		\left\|\*x_{n}(t)-\*x^\dagger(t)\right\| &\leqslant c(\sigma)c_n\exp\left(c(\sigma)\int_0^t\left\|\*w^\dagger(\tau)\right\|\diff\tau\right),
	\end{align*}
	where
	\begin{equation*}
	c_n\coloneqq \int_0^t\left\|\*w_n(\tau)-\*w^\dagger(\tau)\right\|\left\|\*x_n(\tau)\right\| \diff \tau + \int_0^t\left\|\*b_n(\tau)-\*b^\dagger(\tau)\right\|\diff\tau.
	\end{equation*}
	Using the fact that $\*x_n$ is bounded uniformly in $n$ in $C^0([0,T];\R^{d_x})$ (this follows by applying a Gr\"onwall argument) and the strong $L^1$--convergences of the parameters, we conclude that $c_n\longrightarrow0$, whence \eqref{eq: A1} follows.
	
	The proof of \textit{(iii)} follows by arguing as in \textit{(i)} and \textit{(ii)}, with an intermediate use of the Rellich-Kondrachov compactness theorem to ensure strong convergence of the parameters in $L^2$ -- we omit the proof.
	This concludes the proof.
	\end{proof}
	
	\begin{proof}[Proof of \Cref{lem: turnpike.1}]
	We focus on proving \emph{(i)} and split the proof in two steps. The proof of \emph{(ii)} will follow by simply applying a Cauchy-Schwarz inequality at the conclusion of each of the two steps.
	\smallskip
	
	\noindent \textbf{Step 1.} 
	Let us first suppose that $t\leqslant 1$. 
	By integrating \eqref{eq: sigma.outside.i} on the interval $[0,t]\subset[0,1]$ and using the fact that $\sigma\in\Lip(\R)$ and $\sigma(0)=0$, it may be seen that
	\begin{equation*}
	\|\*x(t)\| \leqslant \left\|\*x^0\right\| + c(\sigma)\int_0^t \|\*w(s)\|\|\*x(s)\|\diff s + c(\sigma)\int_0^t \|\*b(s)\|\diff s
	\end{equation*}
	for $t\in[0,1]$, where $c(\sigma)>0$ denotes the Lipschitz constant of $\sigma$. By the Gr\"onwall inequality, we then have
	\begin{equation} \label{eq: B.2}
	\|\*x(t)\| \leqslant \underbrace{\left(\left\|\*x^0\right\|+c(\sigma)N^{\sfrac{1}{2}}\|b\|_{L^1(0,1;\R^d)} \right)\exp\left(c(\sigma)N^{\sfrac{1}{2}}\|w\|_{L^1(0,1;\R^{d\times d})}\right)}_{:=C\left(\big\|[w,b]\big\|_{L^1(0,1;\R^{d_u})}\right)}
	\end{equation}
	for $t\in[0,1]$. Using \eqref{eq: B.2}, it may be seen that
	\begin{align*}
	\|\*x(t)-\overline{\*x}\|&\leqslant \left\|\*x^0-\overline{\*x}\right\|+c(\sigma)N^{\sfrac{1}{2}}\left(\|\*x(t)\|\|w\|_{L^1(0,1;\R^{d\times d})}+\|b\|_{L^1(0,1;\R^d)} \right) \\
	&\leqslant \left\|\*x^0-\overline{\*x}\right\|+c(\sigma)N^{\sfrac{1}{2}}\max\left\{1,C\left(\Big\|[w,b]\Big\|_{L^1(0,1;\R^{d_u})}\right)\right\}\Big\|[w,b]\Big\|_{L^1(0,1;\R^{d_u})}.
	\end{align*}
	
	\noindent 
	\textbf{Step 2.}
	Now suppose that $t\in(1,T]$. We first show that there exists a $t^*\in(t-1,t]$ such that
	\begin{equation} \label{eq: lemma.claim.1}
	\|\*x(t^*)-\overline{\*x}\|\leqslant \|\*x-\overline{\*x}\|_{L^2(0,T;\R^{d_x})}.
	\end{equation}
	This follows by a contradiction argument -- indeed, suppose that 
	\begin{equation*}
	\|\*x(t^*)-\overline{\*x}\|>\|\*x-\overline{\*x}\|_{L^2(0,T;\R^{d_x})}.
	\end{equation*}
	for all $t^*\in(t-1,t]$, then 
	\begin{equation*}
	\int_0^T\|\*x(s)-\overline{\*x}\|^2\diff s \geqslant \int_{t-1}^t \|\*x(s)-\overline{\*x}\|^2\diff s>\int_0^T\|\*x(s)-\overline{\*x}\|^2\diff s,
	\end{equation*}
	which is a contradiction. Consequently, we know that there exists a $t^*\in(t-1,t]$ such that \eqref{eq: lemma.claim.1} holds. By integrating \eqref{eq: sigma.outside.i} in $[t^*,t]$ and using the fact that $\sigma\in\Lip(\R)$ and $\sigma(0)=0$, we see that
	\begin{align*}
	\|\*x(t)-\overline{\*x}\|&\leqslant \|\*x(t^*)-\overline{\*x}\| + c(\sigma) \int_{t^*}^t\Big(\left\|\*w(s)\right\|\left\|\*x(s)-\overline{\*x}\right\|+\left\|\overline{\*x}\right\|\left\|\*w(s)\right\|+\left\|\*b(s)\right\|\Big) \diff s.
	\end{align*}
	By virtue of Gr\"onwall's inequality, it can then be seen that
	\begin{align*}
	N^{-\sfrac{1}{2}}\|\*x(t)-\overline{\*x}\| \leqslant C\left(\|\*x(t^*)-\overline{\*x}\|+\Big\|[w,b]\Big\|_{L^1(0,T;\R^{d_u})}\right)\exp\left(C\big\|w\big\|_{L^1(0,T;\R^{d\times d})}\right)
	\end{align*}
	for some $C=C\left(\sigma,\|\overline{\*x}\|\right)>0$.
	Using \eqref{eq: lemma.claim.1}, we conclude the proof.
	\end{proof}
	
	\begin{proof}[Proof of \Cref{lem: turnpike.2}]
	
	We will make use of the notation $u:=[w,b]$ for simplicity.
	
	Let us first suppose that $T\geqslant 1+\tau_\circ$. In this case, the arguments follow precisely those presented in the proof of \Cref{thm: turnpike.BV}. 
	We just give a brief sketch of the main differences.
	Due to the controllability assumption, there exist $u^\dagger\in L^2(\tau_\circ,1+\tau_\circ;\R^{d_u})$ such that the corresponding solution $\*x^\dagger(\cdot)$ to \eqref{eq: system.tau0} on $(\tau_\circ,1+\tau_\circ)$ satisfies $\*x^\dagger(1+\tau_\circ)=\overline{\*x}$. 
	By a Gr\"onwall argument we see that for $t\in[\tau_\circ,1+\tau_\circ]$, 
	\begin{equation} \label{eq: B.3.app}
	\left\|\*x^\dagger(t)-\overline{\*x}\right\|\leqslant C_1N^{\sfrac{1}{2}}\left\|\*x^{\tau_\circ}-\overline{\*x}\right\|\exp\Big(C_1N^{\sfrac{1}{2}}\left\|\*x^{\tau_\circ}-\overline{\*x}\right\|\Big)
	\end{equation}
	holds for some constant $C_1=C_1(\sigma,\|\overline{\*x}\|,r)>0$ independent of $\tau_\circ, T, \*x^{\tau_\circ}, N$.	
	We now set 
	\begin{equation*}
	u^\aux(t):=\begin{dcases}u^\dagger(t) &\text{ in } (\tau_\circ,1+\tau_\circ)\\
	0 &\text{ in } (1+\tau_\circ, T),
	\end{dcases}
	\end{equation*}
	and we denote by $\*x^\aux(\cdot)$ the associated solution to \eqref{eq: system.tau0}. We note that $\*x^\aux(t)=\overline{\*x}$ for $t\in[1+\tau_\circ,T]$ and thus $\mathscr{E}(\*x^\aux(T))=0$. Just as for the proof of \Cref{thm: turnpike.BV},
	by using $J_{\tau_\circ,T}\left(u_T\right)\leqslant J_{\tau_\circ,T}\left(u^\aux\right)$, the fact that $\mathscr{E}(\*x^\aux(T))=0$, $\mathscr{E}\geqslant0$, \eqref{eq: B.3.app}, \eqref{eq: linear.control.cost.1}, and recalling \Cref{lem: turnpike.1}, we may conclude the proof when $T\geqslant 1+\tau_\circ$.
	
	Now suppose that $T\leqslant 1+\tau_\circ$. By making use of $J_{\tau_\circ, T}(u_T)\leqslant J_{\tau_\circ,T}(u_{1+\tau_\circ})$, where $u_{1+\tau_\circ}\in L^2(\tau_\circ,1+\tau_\circ;\R^{d_u})$ is a minimizer of $J_{\tau_\circ,1+\tau_\circ}$, and since by what precedes we have
	\begin{equation*}
	J_{\tau_\circ,T}\left(u_{1+\tau_\circ}\right)\leqslant J_{\tau_\circ, 1+\tau_\circ}\left(u_{1+\tau_\circ}\right)\leqslant C_2N\left\|\*x^{\tau_\circ}-\overline{\*x}\right\|^2\exp\Big(2C_1N^{\sfrac{1}{2}}r\Big),
	\end{equation*}
	we may combine this inequality with \Cref{lem: turnpike.1} to conclude the proof.
	\end{proof}	

	\begin{proof}[Proof of \Cref{lem: doesnt.work.in.BV}]
	We argue by contradiction. Suppose that there exist parameters $\left[w^\dagger,b^\dagger\right]\in L^2(\tau_\circ,T;\R^{d_u})$ such that
	\begin{equation*}
	J_{\tau_\circ,T}\left(w^\dagger,b^\dagger\right) < J_{\tau_\circ,T}\left(w_T,b_T\right).
	\end{equation*} 
	Set 
	\begin{equation*}
	\left[w^\aux(t),b^\aux(t)\right]:=
	\begin{dcases}
	[w_T(t),b_T(t)] &\text{ for } t\in[0,\tau_\circ)\\
	\left[w^\dagger(t),b^\dagger(t)\right] &\text{ for } t\in[\tau_\circ,T].
	\end{dcases}
	\end{equation*}
	Denoting by $\*x^\aux$ the associated neural ODE trajectory on $[0,T]$ with $\*x^\aux(0)=\*x^0$, we see that $\*x^\aux(T)=\*x^\dagger(T)$ and so $\mathscr{E}(\*x^\aux(T))=\mathscr{E}(\*x^\dagger(T))$. Consequently, 
	\begin{align*}
	J_T\left(w^\aux,b^\aux\right) &= \int_{0}^{\tau_\circ} \|\*x_T(t)-\overline{\*x}\|^2\diff t + \lambda\int_0^T\Big\|[w_T(t),b_T(t)]\Big\|^2\diff t + J_{\tau_\circ,T}\left(w^\dagger,b^\dagger\right)\\
	&< J_T(w_T,b_T),
	\end{align*}
	which contradicts the optimality of $[w_T,b_T]$. This concludes the proof.
	\end{proof} 
	
	\begin{proof}[Proof of \Cref{lem: banach.lemma}]
	Set $\eta(\tau):=\frac{\|f\|_{L^2(a,T;X)}}{\sqrt{\tau}}$. We argue by contradiction. Suppose that 
	\begin{equation*}
	\|f(t)\|_X > \eta(\tau) \hspace{1cm} \text{ for all } t\in[a,a+\tau).
	\end{equation*} 
	Then 
	\begin{equation*}
	\int_a^T \|f(t)\|^2_X\diff t \geqslant \int_a^{a+\tau}\|f(t)\|_X^2\diff t > \tau \eta(\tau)^2.
	\end{equation*}
	Whence
	\begin{equation*}
	\eta(\tau)^2 < \frac{1}{\tau}\int_a^T\|f(t)\|^2_X\diff t = \eta(\tau)^2,
	\end{equation*}
	which is a contradiction.
	\end{proof}

\bibliographystyle{apalike}
	\bibliography{refs}{}

\begin{thebibliography}{}

\bibitem[Agrachev and Sarychev, 2020]{agrachev2020control}
Agrachev, A. and Sarychev, A. (2020).
\newblock Control on the manifolds of mappings as a setting for deep learning.
\newblock {\em arXiv preprint arXiv:2008.12702}.

\bibitem[Albertini and Sontag, 1993]{albertini1993neural}
Albertini, F. and Sontag, E.~D. (1993).
\newblock For neural networks, function determines form.
\newblock {\em Neural Networks}, 6(7):975--990.

\bibitem[Albertini et~al., 1993]{albertini1993uniqueness}
Albertini, F., Sontag, E.~D., and Maillot, V. (1993).
\newblock Uniqueness of weights for neural networks.
\newblock {\em Artificial Neural Networks for Speech and Vision}, pages
  115--125.

\bibitem[Allen-Zhu et~al., 2019a]{allen2019learning}
Allen-Zhu, Z., Li, Y., and Liang, Y. (2019a).
\newblock Learning and generalization in overparameterized neural networks,
  going beyond two layers.
\newblock In {\em Advances in Neural Information Processing Systems}, pages
  6158--6169.

\bibitem[Allen-Zhu et~al., 2019b]{allen2019convergence}
Allen-Zhu, Z., Li, Y., and Song, Z. (2019b).
\newblock A convergence theory for deep learning via over-parameterization.
\newblock In {\em International Conference on Machine Learning}, pages
  242--252. PMLR.

\bibitem[Ambrosio et~al., 2000]{ambrosio2000functions}
Ambrosio, L., Fusco, N., and Pallara, D. (2000).
\newblock {\em Functions of bounded variation and free discontinuity problems}.
\newblock Courier Corporation.

\bibitem[Avelin and Nystr{\"o}m, 2020]{avelin2019neural}
Avelin, B. and Nystr{\"o}m, K. (2020).
\newblock Neural {ODE}s as the deep limit of {ResNets} with constant weights.
\newblock {\em Anal. Appl.}, pages 1--41.

\bibitem[Bartlett and Mendelson, 2002]{bartlett2002}
Bartlett, P.~L. and Mendelson, S. (2002).
\newblock Rademacher and {G}aussian complexities: Risk bounds and structural
  results.
\newblock {\em J. Mach. Learn. Res.}, 3(Nov):463--482.

\bibitem[Bartlett et~al., 2021]{bartlett2021deep}
Bartlett, P.~L., Montanari, A., and Rakhlin, A. (2021).
\newblock Deep learning: a statistical viewpoint.

\bibitem[Benning et~al., 2019]{benning2019deep}
Benning, M., Celledoni, E., Ehrhardt, M.~J., Owren, B., and Sch{\"o}nlieb,
  C.-B. (2019).
\newblock Deep learning as optimal control problems: Models and numerical
  methods.
\newblock {\em J. Comput. Dyn.}, 6(2):171.

\bibitem[Bierstone and Milman, 1988]{bierstone1988semianalytic}
Bierstone, E. and Milman, P.~D. (1988).
\newblock Semianalytic and subanalytic sets.
\newblock {\em Publ. Math. Inst. Hautes Études Sci.}, 67:5--42.

\bibitem[Brezis, 2010]{brezis2010functional}
Brezis, H. (2010).
\newblock {\em Functional analysis, Sobolev spaces and partial differential
  equations}.
\newblock Springer Science \& Business Media.

\bibitem[Bubeck et~al., 2020a]{bubeck2020network}
Bubeck, S., Eldan, R., Lee, Y.~T., and Mikulincer, D. (2020a).
\newblock Network size and weights size for memorization with two-layers neural
  networks.
\newblock {\em arXiv preprint arXiv:2006.02855}.

\bibitem[Bubeck et~al., 2020b]{bubeck2020law}
Bubeck, S., Li, Y., and Nagaraj, D. (2020b).
\newblock A law of robustness for two-layers neural networks.
\newblock {\em arXiv preprint arXiv:2009.14444}.

\bibitem[Budd et~al., 2009]{budd2009adaptivity}
Budd, C.~J., Huang, W., and Russell, R.~D. (2009).
\newblock Adaptivity with moving grids.
\newblock {\em Acta Numer.}, 18:111--241.

\bibitem[Celledoni et~al., 2020]{celledoni2020structure}
Celledoni, E., Ehrhardt, M.~J., Etmann, C., McLachlan, R.~I., Owren, B.,
  Sch{\"o}nlieb, C.-B., and Sherry, F. (2020).
\newblock Structure preserving deep learning.
\newblock {\em arXiv preprint arXiv:2006.03364}.

\bibitem[Chen et~al., 2019]{chen2019residual}
Chen, R.~T., Behrmann, J., Duvenaud, D.~K., and Jacobsen, J.-H. (2019).
\newblock Residual flows for invertible generative modeling.
\newblock In {\em Advances in Neural Information Processing Systems}, pages
  9916--9926.

\bibitem[Chen et~al., 2018]{chen2018neural}
Chen, T.~Q., Rubanova, Y., Bettencourt, J., and Duvenaud, D.~K. (2018).
\newblock Neural ordinary differential equations.
\newblock In {\em Advances in Neural Information Processing Systems}, pages
  6571--6583.

\bibitem[Chizat and Bach, 2018]{chizat2018global}
Chizat, L. and Bach, F. (2018).
\newblock On the global convergence of gradient descent for over-parameterized
  models using optimal transport.
\newblock In {\em Advances in Neural Information Processing Systems}, pages
  3036--3046.

\bibitem[Chizat and Bach, 2020]{chizat2020implicit}
Chizat, L. and Bach, F. (2020).
\newblock Implicit bias of gradient descent for wide two-layer neural networks
  trained with the logistic loss.
\newblock In {\em Proceedings of Thirty Third Conference on Learning Theory,
  PMLR}, volume 125, pages 1305--1338.

\bibitem[Chizat et~al., 2018]{chizat2018lazy}
Chizat, L., Oyallon, E., and Bach, F. (2018).
\newblock On lazy training in differentiable programming.
\newblock {\em arXiv preprint arXiv:1812.07956}.

\bibitem[Coron, 2007]{coron2007control}
Coron, J.-M. (2007).
\newblock {\em Control and nonlinearity}.
\newblock Number 136. American Mathematical Soc.

\bibitem[Coron and Tr{\'e}lat, 2004]{coron2004global}
Coron, J.-M. and Tr{\'e}lat, E. (2004).
\newblock Global steady-state controllability of one-dimensional semilinear
  heat equations.
\newblock {\em SIAM J. Control Optim.}, 43(2):549--569.

\bibitem[Cuchiero et~al., 2020]{cuchiero2019deep}
Cuchiero, C., Larsson, M., and Teichmann, J. (2020).
\newblock Deep neural networks, generic universal interpolation, and controlled
  {ODE}s.
\newblock {\em SIAM J. Math. Data Sci.}, 2(3):901--919.

\bibitem[Cybenko, 1989]{cybenko}
Cybenko, G. (1989).
\newblock Approximation by superpositions of a sigmoidal function.
\newblock {\em Math. Control Signals Systems}, pages 303--314.

\bibitem[Dorfman et~al., 1958]{Samuelson1}
Dorfman, R., Samuelson, P., and Solow, R. (1958).
\newblock {\em Linear Programming and Economic Analysis}.
\newblock Dover Books on Advanced Mathematics. Dover Publications.

\bibitem[Du et~al., 2019]{du2019gradient}
Du, S., Lee, J., Li, H., Wang, L., and Zhai, X. (2019).
\newblock Gradient descent finds global minima of deep neural networks.
\newblock In {\em International Conference on Machine Learning}, pages
  1675--1685. PMLR.

\bibitem[Dupont et~al., 2019]{dupont2019augmented}
Dupont, E., Doucet, A., and Teh, Y.~W. (2019).
\newblock Augmented {N}eural {ODE}s.
\newblock In {\em Advances in Neural Information Processing Systems}, pages
  3134--3144.

\bibitem[Effland et~al., 2020]{effland2020variational}
Effland, A., Kobler, E., Kunisch, K., and Pock, T. (2020).
\newblock Variational networks: An optimal control approach to early stopping
  variational methods for image restoration.
\newblock {\em J. Math. Imaging Vision}, pages 1--21.

\bibitem[Esteve et~al., 2020]{esteve2020turnpike}
Esteve, C., Geshkovski, B., Pighin, D., and Zuazua, E. (2020).
\newblock Turnpike in {L}ipschitz-nonlinear optimal control.
\newblock {\em arXiv preprint arXiv:2011.11091}.

\bibitem[Faulwasser and Gr{\"u}ne, 2020]{faulwasser2020turnpike}
Faulwasser, T. and Gr{\"u}ne, L. (2020).
\newblock Turnpike properties in optimal control: An overview of discrete-time
  and continuous-time results.
\newblock {\em arXiv preprint arXiv:2011.13670}.

\bibitem[Faulwasser et~al., 2021]{faulwasser2021turnpike}
Faulwasser, T., Hempel, A.-J., and Streif, S. (2021).
\newblock On the turnpike to design of deep neural nets: Explicit depth bounds.
\newblock {\em arXiv preprint arXiv:2101.03000}.

\bibitem[Goodfellow et~al., 2016]{goodfellow2016deep}
Goodfellow, I., Bengio, Y., and Courville, A. (2016).
\newblock {\em Deep learning}.

\bibitem[Grathwohl et~al., 2018]{grathwohl2018ffjord}
Grathwohl, W., Chen, R.~T., Bettencourt, J., Sutskever, I., and Duvenaud, D.
  (2018).
\newblock Ffjord: Free-form continuous dynamics for scalable reversible
  generative models.
\newblock {\em arXiv preprint arXiv:1810.01367}.

\bibitem[Gunasekar et~al., 2018]{gunasekar2018implicit}
Gunasekar, S., Lee, J.~D., Soudry, D., and Srebro, N. (2018).
\newblock Implicit bias of gradient descent on linear convolutional networks.
\newblock In {\em Advances in Neural Information Processing Systems}, pages
  9461--9471.

\bibitem[Gunther et~al., 2020]{gunther2020layer}
Gunther, S., Ruthotto, L., Schroder, J.~B., Cyr, E.~C., and Gauger, N.~R.
  (2020).
\newblock Layer-parallel training of deep residual neural networks.
\newblock {\em SIAM J. Math. Data Sci.}, 2(1):1--23.

\bibitem[Haber and Ruthotto, 2017]{haber2017stable}
Haber, E. and Ruthotto, L. (2017).
\newblock Stable architectures for deep neural networks.
\newblock {\em Inverse Problems}, 34(1):014004.

\bibitem[He et~al., 2016]{he2016deep}
He, K., Zhang, X., Ren, S., and Sun, J. (2016).
\newblock Deep residual learning for image recognition.
\newblock In {\em Proceedings of the IEEE conference on computer vision and
  pattern recognition}, pages 770--778.

\bibitem[Hopfield, 1982]{hopfield1982neural}
Hopfield, J.~J. (1982).
\newblock Neural networks and physical systems with emergent collective
  computational abilities.
\newblock {\em Proc. Natl. Acad. Sci. U.S.A.}, 79(8):2554--2558.

\bibitem[Hornik et~al., 1989]{hornik1989}
Hornik, K., Stinchcombe, M., and White, H. (1989).
\newblock Multilayer feedforward networks are universal approximators.
\newblock {\em Neural networks}, pages 359--366.

\bibitem[Javanmard et~al., 2020]{javanmard2020analysis}
Javanmard, A., Mondelli, M., Montanari, A., et~al. (2020).
\newblock Analysis of a two-layer neural network via displacement convexity.
\newblock {\em Annals of Statistics}, 48(6):3619--3642.

\bibitem[Ji and Telgarsky, 2018]{ji2018risk}
Ji, Z. and Telgarsky, M. (2018).
\newblock Risk and parameter convergence of logistic regression.
\newblock {\em arXiv preprint arXiv:1803.07300}.

\bibitem[Kakade et~al., 2009]{kakade2009}
Kakade, S.~M., Sridharan, K., and Tewari, A. (2009).
\newblock On the complexity of linear prediction: Risk bounds, margin bounds,
  and regularization.
\newblock In {\em Advances in Neural Information Processing Systems}, pages
  793--800.

\bibitem[Kingma and Ba, 2014]{kingma2014adam}
Kingma, D.~P. and Ba, J. (2014).
\newblock Adam: A method for stochastic optimization.
\newblock {\em arXiv preprint arXiv:1412.6980}.

\bibitem[Krizhevsky et~al., 2012]{krizhevsky2012imagenet}
Krizhevsky, A., Sutskever, I., and Hinton, G.~E. (2012).
\newblock Imagenet classification with deep convolutional neural networks.
\newblock In {\em Advances in Neural Information Processing Systems}, pages
  1097--1105.

\bibitem[LeCun et~al., 2010]{lecun2010mnist}
LeCun, Y., Cortes, C., and Burges, C. (2010).
\newblock Mnist handwritten digit database.
\newblock {\em ATT Labs [Online]. Available: http://yann.lecun.com/exdb/mnist},
  2.

\bibitem[LeCun et~al., 1988]{lecun1988theoretical}
LeCun, Y., Touresky, D., Hinton, G., and Sejnowski, T. (1988).
\newblock A theoretical framework for back-propagation.
\newblock In {\em Proceedings of the 1988 connectionist models summer school},
  volume~1, pages 21--28. CMU, Pittsburgh, Pa: Morgan Kaufmann.

\bibitem[Li et~al., 2017]{li2017maximum}
Li, Q., Chen, L., Tai, C., and Weinan, E. (2017).
\newblock Maximum principle based algorithms for deep learning.
\newblock {\em J. Mach. Learn. Res.}, 18(1):5998--6026.

\bibitem[Li et~al., 2019]{li2019deep}
Li, Q., Lin, T., and Shen, Z. (2019).
\newblock Deep learning via dynamical systems: An approximation perspective.
\newblock {\em arXiv preprint arXiv:1912.10382}.

\bibitem[Liu and Markowich, 2020]{liu2019selection}
Liu, H. and Markowich, P. (2020).
\newblock Selection dynamics for deep neural networks.
\newblock {\em J. Differ. Eq.}, 269(12):11540--11574.

\bibitem[Lojasiewicz, 1959]{lojasiewicz1961probleme}
Lojasiewicz, S. (1959).
\newblock Sur le problème de la division.
\newblock {\em Studia Math}, 18:87--136.

\bibitem[Lu et~al., 2018]{lu2018beyond}
Lu, Y., Zhong, A., Li, Q., and Dong, B. (2018).
\newblock Beyond finite layer neural networks: Bridging deep architectures and
  numerical differential equations.
\newblock In {\em International Conference on Machine Learning}, pages
  3276--3285.

\bibitem[Ma et~al., 2020]{ma2020fcom}
Ma, C., Wang, K., Chi, Y., and Chen, Y. (2020).
\newblock Implicit regularization in nonconvex statistical estimation: Gradient
  descent converges linearly for phase retrieval, matrix completion, and blind
  deconvolution.
\newblock {\em Found Comput Math}, 20:451--632.

\bibitem[Mallat, 2016]{mallat2016understanding}
Mallat, S. (2016).
\newblock Understanding deep convolutional networks.
\newblock {\em Philosophical Transactions of the Royal Society A: Mathematical,
  Physical and Engineering Sciences}, 374(2065):20150203.

\bibitem[Mei et~al., 2018]{mei2018mean}
Mei, S., Montanari, A., and Nguyen, P.-M. (2018).
\newblock A mean field view of the landscape of two-layer neural networks.
\newblock {\em Proc. Natl. Acad. Sci. U.S.A.}, 115(33):E7665--E7671.

\bibitem[M{\"u}ller, 2019]{muller2019universal}
M{\"u}ller, J. (2019).
\newblock Universal flow approximation with deep residual networks.
\newblock {\em arXiv preprint arXiv:1910.09599}.

\bibitem[Nguyen and Pham, 2020]{nguyen2020rigorous}
Nguyen, P.-M. and Pham, H.~T. (2020).
\newblock A rigorous framework for the mean field limit of multilayer neural
  networks.
\newblock {\em arXiv preprint arXiv:2001.11443}.

\bibitem[Nitanda and Suzuki, 2017]{nitanda2017stochastic}
Nitanda, A. and Suzuki, T. (2017).
\newblock Stochastic particle gradient descent for infinite ensembles.
\newblock {\em arXiv preprint arXiv:1712.05438}.

\bibitem[Paszke et~al., 2017]{paszke2017automatic}
Paszke, A., Gross, S., Chintala, S., Chanan, G., Yang, E., DeVito, Z., Lin, Z.,
  Desmaison, A., Antiga, L., and Lerer, A. (2017).
\newblock Automatic differentiation in {PyTorch}.

\bibitem[Pighin and Zuazua, 2018]{pighin2018controllability}
Pighin, D. and Zuazua, E. (2018).
\newblock Controllability under positivity constraints of semilinear heat
  equations.
\newblock {\em Math. Control Relat. Fields}, 8(3\&4):935.

\bibitem[Pinkus, 1999]{pinkus1999approximation}
Pinkus, A. (1999).
\newblock Approximation theory of the {MLP} model in neural networks.
\newblock {\em Acta Numer.}, 8(1):143--195.

\bibitem[Porretta and Zuazua, 2013]{porretta2013long}
Porretta, A. and Zuazua, E. (2013).
\newblock Long time versus steady state optimal control.
\newblock {\em SIAM J. Control Optim.}, 51(6):4242--4273.

\bibitem[Queiruga et~al., 2020]{queiruga2020continuous}
Queiruga, A.~F., Erichson, N.~B., Taylor, D., and Mahoney, M.~W. (2020).
\newblock Continuous-in-depth neural networks.
\newblock {\em arXiv preprint arXiv:2008.02389}.

\bibitem[Rosset et~al., 2003]{rosset2003margin}
Rosset, S., Zhu, J., and Hastie, T. (2003).
\newblock Margin maximizing loss functions.
\newblock {\em Advances in Neural Information Processing Systems},
  16:1237--1244.

\bibitem[Rosset et~al., 2004]{rosset2004boosting}
Rosset, S., Zhu, J., and Hastie, T. (2004).
\newblock Boosting as a regularized path to a maximum margin classifier.
\newblock {\em J. Mach. Learn. Res.}, 5(Aug):941--973.

\bibitem[Ruiz-Balet and Zuazua, 2021]{ruiz2020universal}
Ruiz-Balet, D. and Zuazua, E. (2021).
\newblock Neural {ODE} control for classification, approximation and transport.
\newblock {\em In preparation}.

\bibitem[Ruthotto and Haber, 2019]{ruthotto2019deep}
Ruthotto, L. and Haber, E. (2019).
\newblock Deep neural networks motivated by partial differential equations.
\newblock {\em Journal of Mathematical Imaging and Vision}, pages 1--13.

\bibitem[Ruthotto and Haber, 2021]{ruthotto2021introduction}
Ruthotto, L. and Haber, E. (2021).
\newblock An introduction to deep generative modeling.
\newblock {\em arXiv preprint arXiv:2103.05180}.

\bibitem[Sander et~al., 2021]{sander2021momentum}
Sander, M.~E., Ablin, P., Blondel, M., and Peyr{\'e}, G. (2021).
\newblock Momentum residual neural networks.
\newblock {\em arXiv preprint arXiv:2102.07870}.

\bibitem[Sirignano and Spiliopoulos, 2020]{sirignano2020mean}
Sirignano, J. and Spiliopoulos, K. (2020).
\newblock Mean field analysis of neural networks: {A} law of large numbers.
\newblock {\em SIAM J. Appl. Math.}, 80(2):725--752.

\bibitem[Sontag and Sussmann, 1997]{sontag1997complete}
Sontag, E. and Sussmann, H. (1997).
\newblock Complete controllability of continuous-time recurrent neural
  networks.
\newblock {\em Systems Control Lett.}, 30(4):177--183.

\bibitem[Sontag and Qiao, 1999]{sontag1999further}
Sontag, E.~D. and Qiao, Y. (1999).
\newblock Further results on controllability of recurrent neural networks.
\newblock {\em Systems Control Lett.}, 36(2):121--129.

\bibitem[Soudry et~al., 2018]{soudry2018implicit}
Soudry, D., Hoffer, E., Nacson, M.~S., Gunasekar, S., and Srebro, N. (2018).
\newblock The implicit bias of gradient descent on separable data.
\newblock {\em J. Mach. Learn. Res.}, 19(1):2822--2878.

\bibitem[Tabuada and Gharesifard, 2020]{tabuada2020}
Tabuada, P. and Gharesifard, B. (2020).
\newblock Universal approximation power of deep neural networks via nonlinear
  control theory.
\newblock {\em arXiv preprint arXiv:2007.06007}.

\bibitem[Teshima et~al., 2020]{teshima2020universal}
Teshima, T., Tojo, K., Ikeda, M., Ishikawa, I., and Oono, K. (2020).
\newblock Universal approximation property of neural ordinary differential
  equations.
\newblock {\em arXiv preprint arXiv:2012.02414}.

\bibitem[Thorpe and van Gennip, 2018]{thorpe2018deep}
Thorpe, M. and van Gennip, Y. (2018).
\newblock Deep limits of residual neural networks.
\newblock {\em arXiv preprint arXiv:1810.11741}.

\bibitem[Tr{\'e}lat and Zhang, 2018]{trelat2018integral}
Tr{\'e}lat, E. and Zhang, C. (2018).
\newblock Integral and measure-turnpike properties for infinite-dimensional
  optimal control systems.
\newblock {\em Mathematics of Control, Signals, and Systems}, 30(1):1--34.

\bibitem[Tr{\'e}lat and Zuazua, 2015]{trelat2015turnpike}
Tr{\'e}lat, E. and Zuazua, E. (2015).
\newblock The turnpike property in finite-dimensional nonlinear optimal
  control.
\newblock {\em J. Differ. Equ.}, 258(1):81--114.

\bibitem[Vapnik, 2013]{vapnik2013}
Vapnik, V. (2013).
\newblock {\em The nature of statistical learning theory}.
\newblock Springer Science \& Business Media.

\bibitem[von Neumann, 1945]{von1945model}
von Neumann, J. (1945).
\newblock A model of general economic equilibrium.
\newblock {\em Review of Economic Studies}, 13(1):1--9.

\bibitem[Wei et~al., 2019]{wei2019regularization}
Wei, C., Lee, J.~D., Liu, Q., and Ma, T. (2019).
\newblock Regularization matters: Generalization and optimization of neural
  nets vs their induced kernel.
\newblock In {\em Advances in Neural Information Processing Systems}, pages
  9712--9724.

\bibitem[Weinan, 2017]{weinan2017proposal}
Weinan, E. (2017).
\newblock A proposal on machine learning via dynamical systems.
\newblock {\em Commun. Math. Stat.}, 5(1):1--11.

\bibitem[Weinan et~al., 2019]{weinan2019mean}
Weinan, E., Han, J., and Li, Q. (2019).
\newblock A mean-field optimal control formulation of deep learning.
\newblock {\em Research in the Mathematical Sciences}, 6(1):1--41.

\bibitem[Yag{\"u}e and Geshkovski, 2021]{yague2021sparse}
Yag{\"u}e, C.~E. and Geshkovski, B. (2021).
\newblock Sparse approximation in learning via neural {ODE}s.
\newblock {\em arXiv preprint arXiv:2102.13566}.

\bibitem[Yun et~al., 2019]{sra2019}
Yun, C., Sra, S., and Jadbabaie, A. (2019).
\newblock Small {ReLU} networks are powerful memorizers: a tight analysis of
  memorization capacity.
\newblock In {\em Advances in Neural Information Processing Systems}, pages
  15558--15569.

\bibitem[Zhang et~al., 2016]{zhang2016understanding}
Zhang, C., Bengio, S., Hardt, M., Recht, B., and Vinyals, O. (2016).
\newblock Understanding deep learning requires rethinking generalization.
\newblock {\em arXiv preprint arXiv:1611.03530}.

\bibitem[Zhang et~al., 2020]{zhang2020approximation}
Zhang, H., Gao, X., Unterman, J., and Arodz, T. (2020).
\newblock Approximation capabilities of neural {ODEs} and invertible residual
  networks.
\newblock In {\em International Conference on Machine Learning}, pages
  11086--11095. PMLR.

\bibitem[Zuazua, 2007]{zuazua2007controllability}
Zuazua, E. (2007).
\newblock Controllability and observability of partial differential equations:
  some results and open problems.
\newblock In {\em Handbook of differential equations: evolutionary equations},
  volume~3, pages 527--621. Elsevier.

\end{thebibliography}

\end{document}